


\documentclass{cpamart1Modified}     
\received{Month 200X}       
\volume{000}
\startingpage{1}                      


\authorheadline{F. Monard, R. Nickl, G.P. Paternain}
\titleheadline{Consistent Inversion of Noisy Non-Abelian X-Ray Transforms}



\usepackage{amsmath,amscd}
\usepackage{amssymb}
\usepackage{amsthm}
\usepackage{graphicx}
\usepackage{mathrsfs}
\usepackage{hyperref}

\usepackage{mathptmx}



\newtheorem{Theorem}{Theorem}[section]
\newtheorem{Lemma}[Theorem]{Lemma}
\newtheorem{Corollary}[Theorem]{Corollary}

\theoremstyle{definition}
\newtheorem{Definition}[Theorem]{Definition}
\theoremstyle{remark}
\newtheorem{Remark}[Theorem]{Remark}
\newtheorem{Condition}[Theorem]{Condition}



\def\ba{\boldsymbol{a}}
\def \bb{\boldsymbol{b}}
\def \bP{\boldsymbol{P}}
\def \balpha{ {\boldsymbol{\alpha}} }

\def \<{\langle}
\def \>{\rangle}

\def \p{\partial}

\def \dim{{\mbox {dim}}\,}

\def\V{\mbox{Var}}

\def\R\re
\def\V{\bf V}

\def \Cm{{\mathbb C}}
\def \re{{\mathbb R}}
\def \mR{{\mathbb R}}

\def \C{{\mathbb C}}

\def \V{{\bf V}}

\def \so{{\mathfrak s}{\mathfrak o}}
\def \u{{\mathfrak u}}

\newcommand{\norm}[1]{\lVert #1 \rVert}

\newcommand{\frob}[1]{\left| #1\right|_F}
\newcommand{\id}{\text{id}}


\setcounter{secnumdepth}{3}


\begin{document}                        


\title{Consistent Inversion of Noisy Non-Abelian X-Ray Transforms}

\author{Fran\c{c}ois Monard}{Department of Mathematics, University of California, Santa Cruz, CA 95064}
\author{Richard Nickl}{Department of Pure Mathematics and Mathematical Statistics,
University of Cambridge,
Cambridge CB3 0WB, UK}
\author{Gabriel P. Paternain}{Department of Pure Mathematics and Mathematical Statistics,
University of Cambridge,
Cambridge CB3 0WB, UK}





\begin{abstract}For $M$ a simple surface, the non-linear statistical inverse problem of recovering a matrix field $\Phi: M \to  \so(n)$ from discrete, noisy measurements of the $SO(n)$-valued scattering data $C_\Phi$ of a solution of a matrix ODE is considered ($n\geq 2$). Injectivity of the map $\Phi \mapsto C_\Phi$ was established by [Paternain, Salo, Uhlmann; Geom.~Funct.~Anal. 2012, \cite{PSUGAFA}]. 

 A statistical algorithm for the solution of this inverse problem based on Gaussian process priors is proposed, and it is shown how it can be implemented by infinite-dimensional MCMC methods. It is further shown that as the number $N$ of measurements of point-evaluations of $C_\Phi$ increases, the statistical error in the recovery of $\Phi$ converges to zero in $L^2(M)$-distance at a rate that is algebraic in $1/N$, and approaches $1/\sqrt N$ for smooth matrix fields $\Phi$. The proof relies, among other things, on a new stability estimate for the inverse map $C_\Phi \to \Phi$.
 
Key applications of our results are discussed in the case $n=3$ to \textit{polarimetric neutron tomography}, see [Desai et al., Nature Sc.~Rep.~2018, \cite{DLSS}] and [Hilger et al., Nature Comm.~2018, \cite{Hetal}].

\end{abstract}

\maketitle   



\setcounter{tocdepth}{2}
 \tableofcontents


 \section{Introduction}
 
\subsection{Non-Abelian $X$-ray transforms} \label{sec:NAXRT}

Our object of study is the non-abelian $X$-ray transform, a mapping from a matrix-valued field $\Phi$ defined on a Riemannian surface with boundary $(M,g,\partial M)$, to its {\em scattering data} $C_\Phi$, defined at the influx boundary $\partial_+ SM$ of $M$, given by 
\begin{align*}
    \partial_+ SM = \{ (x,v) \in TM,\ x\in \partial M,\ g_x(v,v) = 1,\ \<v,\nu_x\>_g\leq 0 \}, 
\end{align*}
where $TM$ is the tangent bundle of $M$, and $\nu_x$ denotes the outward unit normal at $x\in \partial M$.

We will assume that the surface $M$ is {\em simple} in the sense that it is (topologically) a disk, it has no conjugate points, and a strictly convex boundary. Strictly convex domains in the plane (and small perturbations of them) are examples of simple surfaces. In this context, all unit-speed geodesics\footnote{Unit-speed geodesics are locally defined dynamically through the equation $\nabla_{\dot\gamma} \dot \gamma = 0$ with $\nabla$ the Levi-Civita connection, and satisfying $g_{\gamma(t)} (\dot \gamma(t), \dot\gamma(t)) = 1$ for all $t$ where $\gamma(t)$ is defined.} in $M$ exit $M$ in finite time. This fact allows us to identify $\partial_+ SM$ with the space of geodesics on $M$, by associating to any $(x,v)\in \partial_+ SM$ the unique geodesic $\gamma$ passing through $(x,v)$.

Let $\Phi:M \to \Cm^{n \times n}$ be a smooth map. Given  a unit-speed geodesic $\gamma:[0,T]\to M$ with endpoints $\gamma(0), \gamma(T)\in \partial M$, we may define the scattering data of $\Phi$ on $\gamma$ to be $C_\Phi(\gamma):= U(0)$, where $U:[0,T]\to \Cm^{n\times n}$ satisfies the linear system of ODE's
\begin{align*}
    \dot U + \Phi(\gamma(t)) U = 0, \qquad U(T) = \id. 
\end{align*}
This problem, backward in time for convention here, is well-posed and leads to a unique definition of $U(0)$, containing cumulated information about $\Phi$ along the geodesic $\gamma$. Note that when $\Phi$ is scalar, we obtain $\log U(0) = \int_0^T \Phi (\gamma(t))\ dt$, which is the classical X-ray/Radon transform of $\Phi$ along the curve $\gamma$. Considering the collection of all such data makes up the {\em scattering data} (or {\em non-Abelian X-ray transform}) of $\Phi$, viewed here as a map 
\begin{align*}
    C_\Phi\colon \partial_+ SM\to \Cm^{n\times n},
\end{align*}
and we are concerned with the problem of recovering $\Phi$ from $C_\Phi$. Inverting Abelian and non-Abelian X-ray transforms are examples of inverse problems in integral geometry, an active field permeating several tomographic imaging methods, see e.g. the recent topical review \cite{IlMo}. 

The problem of inverting the non-linear mapping $\Phi\mapsto C_\Phi$ in this generality has been recently solved in \cite{PS20}. Previous injectivity results were obtained, either by adding curvature conditions on the manifold, or by fixing a Lie group $G$ (realised as matrices, for simplicity) and its Lie algebra $\mathfrak{g}$, in turn asking whether a $\mathfrak{g}$-valued field $\Phi$ can be recovered from its $G$-valued scattering data $C_\Phi$. In this paper, we will mainly use the Lie groups $SO(n) = \{U\in \mathbb{R}^{n\times n},\ U^TU = \id,\;\det U = 1\}$, $U(n) = \{U\in \Cm^{n\times n},\ U^* U = \id \}$ and $SU(n) = U(n) \cap \{\det = 1\}$, and their Lie algebras $\so(n) = \{A\in \mathbb{R}^{n\times n},\ A^T + A = 0\}$, $\mathfrak{u}(n) = \{A\in \Cm^{n\times n},\ A^* + A = 0\}$ and $\mathfrak{su}(n) = \mathfrak{u}(n) \cap \{\text{tr} = 0\}$. Above, '$T$', '$*$', '$\det$' and '$\text{tr}$' refer to matrix 'transpose', 'conjugate transpose', 'determinant' and 'trace', respectively. Note the inclusions
\begin{align}
    SO(n)\subset SU(n)\subset U(n).
    \label{eq:incl}
\end{align}

The state of the art on this question can be written as follows: 
\begin{Theorem} Let $(M,g)$ be a simple surface. The map $\Phi\mapsto C_\Phi$ is injective in the following cases:

(a) $G=U(n)$ \cite{PSUGAFA};

(b) $G=GL(n,\mathbb{C})$ \cite{PS20}.

\label{thm:injective}
\end{Theorem}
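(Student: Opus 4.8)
\emph{Proof proposal.} The plan is to reduce the non-linear injectivity statement to injectivity of a \emph{linear} attenuated transport problem on the unit sphere bundle $SM$, and then to run the transport-equation machinery available on simple surfaces; the two cases (a), (b) will be distinguished only by how that machinery is set up. So, first, suppose $C_{\Phi_1}=C_{\Phi_2}$. Lift the defining ODE's to $SM$: let $u_j\colon SM\to G$ solve $Xu_j+\Phi_j u_j=0$ with $u_j=\id$ on the outflux part of $\partial SM$ ($X$ the geodesic vector field), so that $u_j=C_{\Phi_j}$ on $\partial_+SM$ and $u_j=\id$ elsewhere on $\partial SM$. Since $C_{\Phi_1}=C_{\Phi_2}$, the quotient $Q:=u_1u_2^{-1}$ equals $\id$ on all of $\partial SM$, and the product rule gives
\begin{equation*}
XQ+\Phi_1 Q-Q\Phi_2=0 .
\end{equation*}
Writing $\widehat{\mathcal A}(B):=\Phi_1 B-B\Phi_2$ for the induced pure-Higgs connection on the endomorphism bundle, the matrix $p:=Q-\id$ then solves $Xp+\widehat{\mathcal A}p=-(\Phi_1-\Phi_2)$ with $p=0$ on all of $\partial SM$; equivalently, the attenuated $X$-ray transform $I_{\widehat{\mathcal A}}(\Phi_1-\Phi_2)$ vanishes. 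So everything reduces to showing that this forces $p$ to be independent of the velocity variable, i.e.\ to have fibre-degree $0$: granting that, $p=p(x)$ and $Xp=d_xp(v)$ has pure fibre-degree $\pm1$, while $\widehat{\mathcal A}p$ and $\Phi_1-\Phi_2$ have fibre-degree $0$, so a comparison of fibre-Fourier degrees yields $d_xp=0$, hence $p\equiv0$ by connectedness of $M$ and $p|_{\partial M}=0$, and therefore $\Phi_1=\Phi_2$.

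Next I would establish that fibre-degree claim. Decompose functions on $SM$ into fibre-Fourier modes $\Omega_k$, with $X=\eta_++\eta_-$ and $\eta_\pm\colon\Omega_k\to\Omega_{k\pm1}$. Two inputs special to simple surfaces drive everything: (i) the solvability of the transport equation with prescribed smooth boundary data, which permits the construction of an invertible \emph{holomorphic} matrix integrating factor $R_+$ for $\widehat{\mathcal A}$ --- an invertible solution of $(X+\widehat{\mathcal A})R_+=0$ supported in nonnegative fibre degrees --- and, symmetrically, an invertible anti-holomorphic one $R_-$; and (ii) the Pestov identity, which forces an $\Omega_k$-element with $|k|\geq1$ that is annihilated by $\eta_+$ (if $k\geq1$) or by $\eta_-$ (if $k\leq-1$) and vanishes on $\partial SM$ to be zero. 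Conjugating $p$ by $R_+$ and by $R_-$ turns its equation into plain transport equations with still-trivial boundary data and holomorphic, resp.\ anti-holomorphic, sources, and the structure theory of the unattenuated $X$-ray transform on simple surfaces then forces $p$ to have finite fibre degree. The extreme (most positive and most negative) fibre-modes of $p$ are then, by the mode-by-mode form of $Xp+\widehat{\mathcal A}p=-(\Phi_1-\Phi_2)$, annihilated by $\eta_+$ resp.\ $\eta_-$, hence vanish by (ii) unless they already sit in degree $0$. Thus $p$ has fibre-degree $0$, completing the argument modulo the construction in (i).

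The main obstacle is precisely (i). When $G=U(n)$ the fields $\Phi_j$, hence $\widehat{\mathcal A}$, are skew-Hermitian, and then the integrating factors are built in \cite{PSUGAFA} from the solvability on simple surfaces together with the skew-symmetry (which keeps the relevant quadratic form non-negative); this is what I would cite for case (a). When $G=GL(n,\mathbb{C})$ this self-adjoint structure is lost, and with it the positivity underlying both the integrating-factor construction and the finite-degree statement; restoring it --- solving the matrix Riemann--Hilbert / factorization problem needed to produce holomorphic integrating factors for an arbitrary $GL(n,\mathbb{C})$-connection on a simple surface --- is the hard new work carried out in \cite{PS20}, which supplies case (b). That step is where I expect essentially all of the difficulty to lie; the surrounding transport-equation bookkeeping is the same as in the scalar and unitary settings.
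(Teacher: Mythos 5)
First, note that the paper does not actually prove Theorem \ref{thm:injective}: it is quoted from \cite{PSUGAFA} and \cite{PS20}, and the closest the paper comes to a proof is the quantitative version in Section 5 (the pseudo-linearization Lemma \ref{lem:pseudolin} followed by the Pestov-identity argument for Theorem \ref{thm:stabLinear}). Your opening reduction is exactly Lemma \ref{lem:pseudolin}: $Q=u_1u_2^{-1}$, $p=Q-\id$, and the vanishing of the attenuated transform $I_{\Theta(\Phi_1,\Phi_2)}(\Phi_1-\Phi_2)$ with the conjugation weight $\Theta(\Phi_1,\Phi_2)B=\Phi_1B-B\Phi_2$. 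Your degree-$0$ endgame (both $\eta_\pm p_0=0$ force $p_0$ constant, hence $0$ by the boundary condition, and the degree-$0$ part of the equation then gives $\Phi_1=\Phi_2$) is also correct.

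The gap is in your input (i) for case (a). Invertible \emph{matrix} holomorphic (and anti-holomorphic) integrating factors for the weight $\widehat{\mathcal A}$ are not constructed in \cite{PSUGAFA}, and they do not follow from "solvability of the transport equation plus skew-symmetry": their existence is essentially equivalent in strength to the injectivity you are trying to prove (indeed \cite{PSUGAFA} derives such integrating factors \emph{from} injectivity, not the other way around). The actual proof of (a) avoids them entirely: one keeps the matrix weight in place, applies the Pestov identity for the pair (Theorem \ref{thm:pestovid} here), artificially adds a \emph{scalar} connection $sa=is\varphi\,\id$ of large negative curvature to dominate the terms produced by $\Phi$, and removes it with \emph{scalar} holomorphic integrating factors $e^{sw}$ (Lemma \ref{lem:HIF}), whose existence follows from surjectivity of $I_0^*$ on simple surfaces. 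The unconditional existence of matrix holomorphic integrating factors for an arbitrary $GL(n,\mathbb C)$ weight --- unitary or not --- is precisely the fibrewise Birkhoff/loop-group factorization that is the new content of \cite{PS20}. So your scheme, as written, is really the \cite{PS20} argument applied uniformly to both cases (and in that form it is a valid proof of (a) and (b) simultaneously); but the dichotomy you draw, with skew-symmetry supplying the integrating factors in case (a), is not where the difficulty actually sits, and case (a) as you have attributed it rests on an unavailable ingredient.
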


The proof of (b) consists of a reduction to the unitary case in (a) via a factorization theorem in Loop Groups. Earlier injectivity results have been obtained by several authors, cf. \cite{E,No,Novikov_nonabelian} and references therein, particularly when $(M,g)$ is a domain in the Euclidean plane. 

\smallskip

The absence of concrete reconstruction formulas for the inverse map $C_\Phi \to \Phi$ when $n \ge 2$, and the challenge of dealing with physical experiments such as those arising in polarimetric neutron tomography (see Section \ref{PNT}), where $N$ discrete and noisy measurements $D_N \sim P_\Phi^N$ of $C_\Phi \in SO(3)$ are made (see Section \ref{obs} for details), motivate the main contribution of this article, which is to present a  statistical algorithm $\bar \Phi(D_N)$ that allows to recover $\Phi$. The implementation of $\bar \Phi(D_N)$ is detailed in Section \ref{impl}, and our main theoretical result is the statistical analogue of the injectivity result Theorem \ref{thm:injective}, namely the \textit{frequentist consistency of reconstruction in the large sample limit,} which somewhat informally can be stated as follows:
\begin{Theorem}
Suppose the data $D_N$ is generated from the probability distribution $P_{\Phi_0}^N$ where $\Phi_0: M \to \mathfrak{so}(n)$ is any smooth matrix field $\Phi_0$. Then we have that, as sample size $N \to \infty$, and in $P^N_{\Phi_0}$-probability, $$\|\bar \Phi(D_N) - \Phi_0\|_{L^2(M)} \to 0.$$ 
\end{Theorem}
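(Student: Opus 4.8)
The plan is to deploy the general machinery for Bayesian nonparametric inverse problems with Gaussian process priors (in the style of Nickl and collaborators), where the two structural ingredients one needs are (i) a quantitative \emph{forward} stability/continuity estimate $\|C_\Phi - C_{\Phi'}\|$ controlled by a suitable weak norm of $\Phi - \Phi'$, together with some Lipschitz/analyticity of the parametrisation, and (ii) a quantitative \emph{inverse} stability estimate, bounding $\|\Phi - \Phi'\|_{L^2(M)}$ (or an interpolated norm) by a negative Sobolev norm of $C_\Phi - C_{\Phi'}$, possibly conditional on a-priori smoothness bounds. The excerpt already advertises that a \emph{new stability estimate for the inverse map} $C_\Phi \mapsto \Phi$ is proved in the paper, so I would invoke exactly that as the key analytic input; injectivity alone (Theorem \ref{thm:injective}) is not enough, one needs the modulus of continuity of the inverse.

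Concretely, I would proceed as follows. First, set up the statistical model: $D_N$ consists of $N$ i.i.d.\ (or fixed-design) pairs $(Z_i, Y_i)$ with $Z_i$ sampled from a fixed design measure on $\partial_+ SM$ and $Y_i = C_{\Phi_0}(Z_i) + \text{noise}$ (Gaussian matrix noise), and write down the log-likelihood and the induced posterior on $\Phi$ from a rescaled Gaussian process prior $\Pi_N$ supported on (a ball of) some Sobolev space $H^\alpha$ with $\alpha$ large enough that the forward map and its mapping properties make sense. Second, prove a \emph{posterior contraction} statement in the prediction/Hellinger-type metric: using the standard testing-plus-prior-mass scheme (Ghosal–Ghosh–van der Vaart), the small-ball probability of the GP prior around $\Phi_0$ combined with the complexity bound for Sobolev balls yields $\Pi_N(\,\|C_\Phi - C_{\Phi_0}\|_{L^2(\partial_+SM)} \le M_N \eps_N \mid D_N) \to 1$ for a rate $\eps_N$ that is algebraic in $N^{-1}$ (and near $N^{-1/2}$ for smooth truth), the key check being that the forward map $\Phi \mapsto C_\Phi$ is Lipschitz from the prior's support into $L^2(\partial_+ SM)$ so that the required prior mass on Kullback–Leibler neighbourhoods follows from the GP small-ball estimates. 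Third, convert this into contraction for $\Phi$: intersect the above event with a regularity event $\{\|\Phi\|_{H^\beta} \le B\}$ (which carries posterior mass $\to 1$ by the excess-mass/regularity properties of the prior), and apply the inverse stability estimate to get $\|\Phi - \Phi_0\|_{L^2(M)} \lesssim \|C_\Phi - C_{\Phi_0}\|_{L^2}^{\mu}$ for some $\mu \in (0,1]$, hence $\Pi_N(\|\Phi - \Phi_0\|_{L^2(M)} \le \delta_N \mid D_N) \to 1$ with $\delta_N \to 0$ algebraically. Fourth, define the estimator $\bar\Phi(D_N)$ as a posterior mean (or a posterior-based centre of a small ball), and deduce $\|\bar\Phi(D_N) - \Phi_0\|_{L^2(M)} \to 0$ in $P^N_{\Phi_0}$-probability from the contraction of the posterior around $\Phi_0$ together with a uniform integrability / second-moment bound on the posterior (again using that the prior puts mass on bounded Sobolev balls and a convexity argument for the $L^2$ mean).

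The main obstacle, and the place where the genuinely new work lies, is step three: obtaining a \emph{stable} (Hölder or Lipschitz) inverse estimate $\|\Phi - \Phi'\|_{L^2(M)} \le F(\|C_\Phi - C_{\Phi'}\|)$ conditional on uniform smoothness bounds. Unlike the linear (Abelian) X-ray transform, the map $\Phi \mapsto C_\Phi$ is nonlinear, and the natural linearisation is the attenuated X-ray transform along geodesics with an $SO(n)$-valued attenuation; one must control how perturbations of $\Phi$ propagate through the transport ODE and then invoke an injectivity-with-stability result for the linearised operator (a pseudodifferential/microlocal argument on the simple surface, likely using the Pestov identity and the structure of the scattering operator from \cite{PSUGAFA}), while carefully tracking constants' dependence on the a-priori bound $B$ and interpolating between the weak norm in which stability is naturally obtained and the $L^2(M)$ target norm. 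A secondary technical nuisance is verifying the prior-mass/testing conditions with the correct non-standard link function (matrix ODE data rather than a linear operator), which requires the forward Lipschitz bound in step two to hold on the full support of the (possibly unbounded) GP prior, typically handled by a prior truncation to a high-probability Sobolev ball.
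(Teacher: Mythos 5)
Your proposal follows essentially the same route as the paper: posterior contraction in the Hellinger/prediction metric via the small-ball-plus-testing scheme for a rescaled Gaussian prior, conversion to $L^2(M)$-contraction for $\Phi$ by intersecting with a high-posterior-probability regularity event and applying the quantitative stability estimate (itself proved via pseudo-linearisation to an attenuated X-ray transform and a Pestov identity) together with Sobolev interpolation, and finally a uniform-integrability argument to transfer contraction to the posterior mean. The only cosmetic difference is that the paper enforces regularity through sieve sets $\mathcal{F}_N$ in the contraction theorem (using Fernique and Borell inequalities) rather than truncating the prior, and it emphasises that the $N$-dependent shrinkage of the prior is what tames the exponential dependence of the stability constant on $\|\Phi\|_{C^1}$.
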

See Theorem \ref{main} in Section \ref{bayes101} for a fully rigorous statement of this result, which in fact requires significantly weaker hypotheses on $\Phi_0$, and also specifies an explicit `algebraic' rate of convergence $N^{-\eta}$ in the last limit. 

The proof of the previous theorem relies on ideas from Bayesian nonparametric statistics \cite{vdVvZ08, GvdV17} and on new `quantitative versions' of the injectivity result in Theorem \ref{thm:injective} which are of independent interest and stated in Section \ref{theory}. 

\subsection{Polarimetric neutron tomography (PNT)} \label{PNT}

The basic problem in PNT consists in finding a magnetic field from spin measurements of neutrons \cite{Kardjilov_et_al2008, Dawson2009, DLSS, Hetal}. In this case the explicit relation is
\[ \Phi = \left[ \begin{matrix} 0 & B_{3}&-B_{2} \\- B_{3} & 0&B_{1}\\B_{2}&-B_{1}&0 \end{matrix} \right]\]
where $B=(B_{1},B_{2},B_{3})$ is the magnetic field. In the case of PNT one assumes that the underlying surface $M$ is just the disc in the plane (by slicing with 2D discs one can solve the 3D problem).

The details of the experiment of polarimetric neutron tomography may be found, e.g., in \cite{DLSS}. Here we give a description that is suitable for our purposes. The data produced by the experiment
is the orthogonal matrix $C^{-1}_{\Phi}(x,v)=C^{T}_{\Phi}(x,v)\in SO(3)$, where $C_{\Phi}(x,v)$ is the scattering data described above. The significance of this in terms of spin, is a follows: if a neutron travelling along the ray determined by $(x,v)$ enters the magnetic field with a spin $s_{in}\in \mathbb{S}^2$ ($\mathbb{S}^2$ denotes the Euclidean unit sphere in $\mathbb{R}^3$), it exits the field with spin $s_{out}=C^{-1}_{\Phi}(x,v)s_{in}\in \mathbb{S}^2$ (for an ensemble of polarized neutrons in a magnetic field it can be shown that they behave like a particle with a classical magnetic moment). The magnetic field $B$ is defined in 3D space, but the experiment makes measurements on a 2D plane and produces a global reconstruction by slicing. The geometry of the experiment is thus a 2D parallel beam geometry which is easily converted into fan-beam geometry as considered above.
The question is then how to manipulate the spin to produce the orthogonal matrix. This is done with an ingenious sequence of spin flippers and rotators placed before and after the magnetic field being measured. The material containing the magnetic field  can also be rotated so as to produce parallel beams from different angles.
After the spin has been manipulated it goes through an analyser; this device is essentially a spin filter that
only lets those neutrons with vertically aligned spin go through. The neutron count is then measured with a detector that produces an intensity reading. The spin of the entering beam is perfectly aligned with the spin of the analyser, so that the intensity measurement is actually a measurement of the angle of rotation of the spin due to the magnetic field. The key relation is given by \cite[Equation 1]{Kardjilov_et_al2008}
\begin{equation}
I=I_{0}A\frac{1}{2}(1+\cos\varphi),
\label{eq:savior}
\end{equation}
where $A$ is the attenuation of the medium, $I_{0}$ is the intensity of the incoming beam
and $\varphi$ is the angle by which the spin has rotated.

The use of the spin flipper allows the measurement of
\[I'=I_{0}A\frac{1}{2}(1-\cos\varphi),\]
and from this one deduces that
\[\cos\varphi=\frac{I-I'}{I+I'}\]
which then becomes an entry of our matrix $C^{T}_{\Phi}(x,v)$. 
By rotating by $\pi/2$ and flipping (rotation by $\pi$) one can thus produce the entire orthogonal matrix as data. 
In other words, if $\{e_{1},e_{2},e_{3}\}$ is the canonical basis of 3-space, $\cos\varphi$ gives
$C^{T}_{\Phi}(x,v)e_{i}\cdotp e_{j}$ for all $i,j$ and hence all the entries.
In some situations, where the attenuation of the medium is known, the use of spin flippers is not necessary and can be calibrated out.
Assuming an additive Gaussian noise in the intensities $I$, equation \eqref{eq:savior} approximately produces an additive Gaussian noise in the entries of the matrix $C_{\Phi}$ which is precisely the noise model we adopt below.

As in the articles \cite{DLSS,DLSS2} our approach reconstructs 3D magnetic fields of arbitrary direction and distribution. This provides a method able to investigate samples without imposing any a priori knowledge of the magnetic field orientation, and requires understanding of the full non-linear inverse problem. The recent preprint \cite{DLSS2} introduces a modified Newton-Kantorovich type algorithm for the solution of the non-linear problem, a Newton-type algorithm where the inversion of the Jacobian at each iteration only uses the differential of the map $\Phi \mapsto C_\Phi$ at the base point $\Phi_0 \equiv 0$. 

As pointed out in \cite{DLSS2}, the algorithm appears to work well for  small  enough  fields  (or large enough velocities of neutrons), but may fail due to ``phase wrapping" when the field is large enough. Our approach does not exhibit this problem.

\subsection{The statistical observation scheme}\label{obs}

Consider a simple surface $M$ as above with influx boundary $\partial_+ SM$, and a matrix valued map $$\Phi: M \to \mathfrak g$$ and scattering data $$C_\Phi: \partial_+ SM \to G.$$ Here we take $G = SO(n)$ for some $n\ge 2$, with corresponding Lie algebra $\mathfrak g = \so(n)$, the set of skew-symmetric matrices. Recall that in the key application to PNT from the previous subsection, $M$ is the flat disk and $n=3$. We could take $G=SU(n)$ and $\mathfrak g = \mathfrak{su}(n)$ just as well, but for sake of conciseness prefer to avoid a complex-valued statistical noise model in what follows.

To describe the statistical observation setting, let $\lambda$ be the uniform distribution (volume element) on $\partial_+ SM$ (see  (\ref{lambo}) below for a precise definition), and consider `design' random variables $$(X_i, V_i)_{i=1}^N \sim^{i.i.d.} \lambda \text{ on } \partial_+ SM.$$ These draws represent a randomised choice of the geodesics for which experiments are  performed -- they have to be `equally spaced' throughout `geodesic space' $\partial_+SM$ in a statistical sense. For each resulting measurement of $C_\Phi((X_i, V_i))$  the statistical observational error arising in the experiment is modelled by independent Gaussian matrix noise. More precisely let 
\begin{align*}
    (\varepsilon_{i,j,k}: 1 \le j,k \le n)_{i=1}^N\qquad \text{ be i.i.d. }\quad N(0,\sigma^2), ~\sigma>0,
\end{align*}
random variables that are independent of the $(X_i, V_i)$'s, and let $\mathcal E_i=(\varepsilon_{i,j,k})$ be the random $n \times n$ noise matrix which adds a Gaussian noise variable in each matrix entry to $C_\Phi((X_i, V_i))$. Our observations then consist of the sequence of $N$ random $n \times n$ matrices
\begin{equation}\label{model}
Y_i=(Y_{i,j,k}), Y_{i,j,k} = C_\Phi ((X_i, V_i))_{j,k} + \varepsilon_{i,j,k}, ~~ i=1, \dots, N; 1 \le j,k \le n.
\end{equation}
The variables $Y_{i,j,k}$ are all independent, and even i.i.d. for $j,k$ fixed. Conditionally on $(X_i, V_i)=(x_i,v_i)$ they are multivariate normal random variables with diagonal covariance and (vectorised) mean $C_\phi(x_i,v_i)_{j,k}$. Note that while $C_\Phi(x,y)$ takes values in $SO(n)$, the $Y_i$ are not in $SO(n)$ (or even $U(n)$) as we have not constrained  $\mathcal E_i$ at all -- this is in line with the physical experiments for PNT described in Section \ref{PNT} where statistical errors arise from noisy measurements of each matrix entry of $C_\Phi(x,v)$.  For the theory we will assume  that the noise variance $\sigma^2>0$ is fixed and known -- in practice it can be replaced by the estimated sample variance of the $Y_{i,j,k}$'s.

\smallskip

To fix notation: The joint law of the random variables $(Y_{i}, (X_i, V_i))_{i=1}^N$ in (\ref{model}) on $(\mathbb R^{n \times n} \times \partial_+SM)^N$ will be denoted by $P_\Phi^N = \times_{i=1}^N P_\Phi^i$, where we note $P_\Phi^i=P_\Phi^1$ for all $i$. We also write $P^N_\varepsilon$ for the law of the $(\mathcal E_i)_{i=1}^N$'s, $\lambda^N$ for the law of the $(X_i, V_i)_{i=1}^N$ and 
\begin{equation}\label{data}
D_N = \{Y_1, \dots, Y_N, (X_1, V_1), \dots, (X_N, V_N)\}
\end{equation}
 for the full data vector. The corresponding expectation operators are obtained by replacing `$P$' by `$E$' in the preceding expressions. The dependence on $\sigma^2$ will be suppressed in the notation. 

\subsection{Some geometric background and basic notation} \label{sec:notation}

We conclude this section by introducing some more basic notation that will be used throughout. 

Our background geometry is a {\em simple} surface with boundary $(M,g, \partial M)$. By 'simple', we mean (i) $M$ is non-trapping (in the sense that every maximal geodesic in $M$ has finite length), (ii) $M$ has no conjugate points and (iii) $\partial M$ is strictly convex (i.e. $\partial M$ has positive definite second fundamental form). We denote by $SM$ the unit tangent bundle of $M$, namely 
\begin{align*}
    SM = \{ (x,v)\in TM,\ g_x(v,v) = 1 \}.
\end{align*}
Its boundary $\partial SM:=\{(x,v)\in SM:\;x\in\partial M\}$ can be split into 'influx' and 'outflux' boundary, depending on whether the tangent vector points inside or outside, namely we define, for $\nu_{x}$ is the outer unit normal at $x\in\partial M$, 
\begin{align*}
    \p_{\pm}SM:=\{(x,v)\in \p SM : \pm\<v,\nu_x\>_g\leq 0\}.
\end{align*}

The manifolds $M$, $\partial M$, $SM$ and $\partial_{+}SM$ all carry natural volume elements, allowing us to define $L^2$ spaces below. Specifically, the Riemannian metric $g$ induces an area form $dx$ on $M$ and restricts to a metric on $\partial M$. The unit sphere bundle $SM$ carries the volume element $d\Sigma^3=dx\,dv$ where $dv$ is the length element in the unit circle $S_{x}\subset T_{x}M$. Finally the boundary $\partial SM$ of $SM$ carries the area form $d\Sigma^2 = ds\ dv$ where $dv$ is as above and $ds$ is the arclength (w.r.t. the metric $g$) along the boundary. Its restriction to $\partial_+ SM$ will be denoted by
\begin{equation}\label{lambo}
\lambda \equiv \frac{1}{Area(\partial_{+}SM)}d\Sigma^2|_{\partial_{+}SM}.
\end{equation}
The spaces $\Cm^n$ and $\Cm^{n\times n}$ will be equipped with the canonical Hermitian inner product $\langle\cdot, \cdot\rangle$ and induced norm $|\cdot|$. For elements in $\Cm^{n\times n}$, this corresponds to the Frobenius norm $\frob{A}^2:= \text{tr} (A^*A) = \sum_{i,j=1}^n |A_{i,j}|^2$, which is $U(n)$-invariant in the sense that for any $U\in U(n)$ and $A$ arbitrary, $\frob{AU} = \frob{UA} = \frob{A}$.

Given $(N,h)$ a $d$-dimensional Riemannian manifold (either $M$, $\partial M$, $SM$, $\partial_+ SM$, or $\partial SM$ as explained above), one may adapt the usual function spaces to $\Cm^n$- or $\Cm^{n\times n}$-valued functions as follows: $L^2(N,\Cm^{n\times n})$, $L^\infty(N,\Cm^{n\times n})$ with norms
\begin{align*}
    \|U\|^2_{L^2} := \int_N \frob{U}^2\ d\text{Vol}_h, \qquad \|U\|_{L^\infty} := \sup_{y\in N} \frob{U(y)}.
\end{align*}
One may differentiate functions using partial derivatives $\{\partial_{y_j}\}_{j=1}^d$ in coordinate charts, or equivalently, using $\{T_j\}_{j=1}^d$ a global basis of smooth vector fields on $N$ which pairwise commutes (it will be useful to adopt the latter viewpoint in later sections). 
Given a $d$-index $\balpha = (\alpha_1, \dots, \alpha_d)$, one may define $|\balpha| = \alpha_1 + \dots + \alpha_d$ and $T^{\balpha} = T_1^{\alpha_1}\cdots T_d^{\alpha_d}$. The metric $h$ equips $N$ with a distance function $d_h(x,y)$, and for $\beta\ge 0$, we can thus define H\"older spaces $C^\beta(N,\Cm^{n\times n})$ with norm 
\begin{align*}
    \|U\|_{C^\beta} = \sum_{|\balpha|\le \lfloor \beta\rfloor} \sup_{y\in N} \frob{T^{\balpha} U(y)} + \sum_{|\balpha| = \lfloor \beta\rfloor} \sup_{x\ne y\in N} \frac{\frob{T^{\balpha} U(x) - T^{\balpha}U(y)}}{d_h(x,y)^{\beta-\lfloor \beta\rfloor}},
\end{align*} 
with the second term removed when $\beta$ is an integer. We will also use $L^2$-based Sobolev spaces $H^s (N,\Cm^{n\times n})$ with norm 
\begin{align*}
    \|U\|^2_{H^s} = \sum_{|\balpha| \le s} \|T^{\balpha} U\|_{L^2}^2,
\end{align*}
for $s \in \mathbb N$, and defined by interpolation otherwise (see, e.g., \cite[Ch. 4]{T}).

As above, when clear from the context, the domain and/or codomain will be dropped from the notation. In the following sections, spaces of functions with codomain $SO(n)$, $SU(n)$ or their Lie algebras will make use of the same topology of the corresponding spaces of $\Cm^{n\times n}$-valued functions. The $c$-subscript attached to a space of maps defined on $M$ denotes the linear subspace of those maps that vanish identically outside of a compact subset of the interior $M^{int}$ of $M$.

\section{Theoretical results for the deterministic inverse problem}\label{theory}

When discrete measurements of the forward data $C_\Phi$ are corrupted by statistical noise, the injectivity result Theorem \ref{thm:injective} is not useful to reconstruct $\Phi$ from the observations, and we will discuss in the next section how to develop statistical methods that consistently solve this statistical inverse problem. The proofs that substantiate these methods are based on \textit{quantitative} versions of Theorem \ref{thm:injective} -- stability estimates -- as well as continuity properties of the forward map, and we describe in this section the analytical results we obtain. 

\smallskip

The results to follow hold when the codomain of the matrix fields is the largest of the three compact Lie groups introduced before Theorem \ref{thm:injective}, namely $U(n)$ (with Lie algebra $\u(n)$), see Eq. \eqref{eq:incl}.  

\begin{Theorem} \label{thm:stability}
    Let $(M,g)$ be a simple surface. Given two matrix fields $\Phi$ and $\Psi$ in $C^{1}(M,\u(n))$ there exists a constant $c(\Phi,\Psi)$ such that
    \begin{align*}
	\norm{\Phi-\Psi}_{L^{2}(M)}\leq c(\Phi,\Psi)\norm{C_{\Phi}C_{\Psi}^{-1}-\text{\rm id}}_{H^{1}(\partial_{+}SM)},	
    \end{align*}
    where $c(\Phi,\Psi)$ is a continuous function of $\|\Phi\|_{C^1} \vee \|\Psi\|_{C^1}$, explicitly 
    \begin{align}
	c(\Phi,\Psi)=C_1 (1+ (\norm{\Phi}_{C^1}\vee \norm{\Psi}_{C^1}))\ e^{C_2 (\norm{\Phi}_{C^1} \vee \norm{\Psi}_{C^1})},     
	\label{eq:cPhiPsi}
    \end{align}	
    and where the constants $C_1, C_2$ only depend on $(M,g)$.
\end{Theorem}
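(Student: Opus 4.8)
The plan is to reduce the estimate, via a \emph{pseudo-linearization identity}, to a quantitative stability estimate for an \emph{attenuated} geodesic X-ray transform with a unitary (velocity-independent) attenuation, and then to obtain the latter from a Pestov-type energy identity on the simple surface $(M,g)$, keeping track of all constants. For the first step, given $\Theta\in C^1(M,\u(n))$ let $u_\Theta\colon SM\to GL(n,\Cm)$ be the unique solution of $Xu_\Theta+\Theta u_\Theta=0$ on $SM$ with $u_\Theta|_{\partial_- SM}=\id$, where $X$ is the geodesic vector field and $\Theta$ is pulled back to $SM$ along footpoints; the ODE convention of Section \ref{sec:NAXRT} gives $u_\Theta|_{\partial_+ SM}=C_\Theta$. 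Setting $w:=u_\Phi u_\Psi^{-1}$ and $W:=w-\id$, and using $X(u_\Psi^{-1})=u_\Psi^{-1}\Psi$, a one-line computation yields
\begin{equation*}
XW+\Phi W-W\Psi=-(\Phi-\Psi),\qquad W|_{\partial_- SM}=0,\qquad W|_{\partial_+ SM}=C_\Phi C_\Psi^{-1}-\id .
\end{equation*}
Writing $f:=\Phi-\Psi$, which is of degree zero (depends on the footpoint only), and defining the bundle endomorphism $\mathcal A(x)\in\mathrm{End}(\Cm^{n\times n})$ by $\mathcal A(x)B:=\Phi(x)B-B\Psi(x)$, the identity reads $XW+\mathcal AW=-f$; and since $\Phi,\Psi\in\u(n)$ the operator $\mathcal A(x)$ is skew-Hermitian for the Frobenius inner product, so $\mathcal A$ is a velocity-independent unitary attenuation. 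Hence $C_\Phi C_\Psi^{-1}-\id$ is exactly the attenuated geodesic X-ray transform $I_{\mathcal A}f$ of the degree-zero field $f$ --- up to the reflection $(x,v)\mapsto(x,-v)$, which interchanges $\partial_\pm SM$ and is an isometry of all spaces involved --- and the theorem reduces to showing $\norm{f}_{L^2(M)}\le C_1(1+\norm{\mathcal A}_{C^1})e^{C_2\norm{\mathcal A}_{L^\infty}}\norm{I_{\mathcal A}f}_{H^1(\partial_+ SM)}$, which gives \eqref{eq:cPhiPsi} because $\norm{\mathcal A}_{C^1}\lesssim\norm{\Phi}_{C^1}\vee\norm{\Psi}_{C^1}$ and $\norm{\mathcal A}_{L^\infty}\lesssim\norm{\Phi}_{C^1}\vee\norm{\Psi}_{C^1}$.

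For the reduced estimate I would run the Pestov identity with a unitary connection/attenuation, as developed for X-ray transforms of connections in \cite{PSUGAFA}. Applying the vertical derivative $\vd$ to $XW+\mathcal AW=-f$ and using $\vd f=0$ (degree zero) and $\vd(\mathcal AW)=\mathcal A\,\vd W$ (velocity-independence of $\mathcal A$), together with the structure equations on $SM$ which bring in the Gaussian curvature, one is led to a quadratic identity in $\vd W$ and $XW$; simplicity of $(M,g)$ --- no conjugate points and strictly convex boundary --- makes the associated quadratic form coercive through positivity of the index form. This bounds $\norm{f}^2_{L^2(M)}$ by the tangential part of the boundary trace, i.e. by $\norm{W|_{\partial_+ SM}}^2_{H^1(\partial_+ SM)}=\norm{C_\Phi C_\Psi^{-1}-\id}^2_{H^1(\partial_+ SM)}$, plus error terms of lower differential order of the schematic form $c(M,g)(1+\norm{\mathcal A}_{C^1})\norm{W}_{L^2(SM)}\norm{C_\Phi C_\Psi^{-1}-\id}_{H^1(\partial_+ SM)}$. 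The interior factor is controlled \emph{linearly in} $f$: since $W$ is the unique solution of $XW+\mathcal AW=-f$ vanishing on $\partial_- SM$, integrating along geodesics, estimating the $\mathcal A$-parallel transport by $e^{L\norm{\mathcal A}_{L^\infty}}$ (Gr\"onwall, $L$ the maximal geodesic length of $M$) and invoking the boundedness $L^2(M)\to L^2(SM)$ of the unattenuated transport solution operator gives $\norm{W}_{L^2(SM)}\le c(M,g)e^{L\norm{\mathcal A}_{L^\infty}}\norm{f}_{L^2(M)}$. Substituting this and absorbing via Cauchy--Schwarz and Young's inequality yields $\norm{f}_{L^2(M)}\le C_1(1+\norm{\mathcal A}_{C^1})e^{C_2\norm{\mathcal A}_{L^\infty}}\norm{C_\Phi C_\Psi^{-1}-\id}_{H^1(\partial_+ SM)}$ with $C_1,C_2$ depending only on $(M,g)$.

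The delicate part is the bookkeeping in the second step: organizing the boundary and error terms in the Pestov identity so that only a genuine $H^1(\partial_+ SM)$-norm of the data survives, verifying that every curvature- and attenuation-dependent error term is of strictly lower order and bounded linearly in both $f$ and the data, and checking that the final absorption closes with precisely the advertised polynomial-times-exponential dependence on $\norm{\Phi}_{C^1}\vee\norm{\Psi}_{C^1}$. A secondary, purely technical point is regularity: with $\Phi,\Psi$ only $C^1$ one must justify the energy identities by approximation and control $u_\Phi,u_\Psi$ (hence $W$ and $\vd W$) up to $\partial SM$, using that on a simple surface the exit-time function and the solution operators of transport equations are smooth in the interior of $SM$ and sufficiently regular up to the boundary away from the glancing set, with the glancing region handled as in \cite{PSUGAFA}.
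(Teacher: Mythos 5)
Your first step --- pseudo-linearization to $XW+\Phi W-W\Psi=-(\Phi-\Psi)$, the observation that the bundle endomorphism $\Theta(\Phi,\Psi)\cdot U=\Phi U-U\Psi$ is skew-Hermitian for the Frobenius inner product, and the linear bound $\|\Theta(\Phi,\Psi)\|_{C^1}\lesssim\|\Phi\|_{C^1}\vee\|\Psi\|_{C^1}$ --- is exactly the paper's Lemma \ref{lem:pseudolin}, and your reduction of the theorem to a quantitative stability estimate for the attenuated X-ray transform $I_{\Theta}$ is the same reduction the paper performs.

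The gap is in the second step, the proof of the linear estimate (the paper's Theorem \ref{thm:stabLinear}). You assert that after applying $\vd$ and the Pestov identity, ``simplicity makes the associated quadratic form coercive'' and that the remaining terms are ``of strictly lower order and bounded linearly in both $f$ and the data,'' of the schematic form $c(1+\|\mathcal A\|_{C^1})\|W\|_{L^2(SM)}\|C_\Phi C_\Psi^{-1}-\id\|_{H^1}$, so that a Cauchy--Schwarz/Young absorption closes the estimate. This does not work as stated. The Pestov identity for the operator $G=X+\mathcal A$ produces interior terms of the type $\Re(\mathcal A\,W,GW)$ and $\Re\bigl((X_\perp\mathcal A)W,VW\bigr)$ (see \eqref{eq:Pestov3}); these are \emph{not} of lower order multiplied by a data norm, they are interior $L^2$ quantities. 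A Cauchy--Schwarz bound such as $|\Re(\mathcal A\,W,GW)|\le\|\mathcal A\|_\infty\|W\|\,\|f\|\lesssim\|\mathcal A\|_\infty\|f\|^2$ is of the \emph{same} order as the coercive term $\|GW\|^2=\|f\|^2$, with a coefficient that grows with $\|\mathcal A\|_\infty$ and therefore cannot be absorbed for large attenuations. Likewise $|\Re((X_\perp\mathcal A)W,VW)|\lesssim\|\mathcal A\|_{C^1}\|W\|\,\|VW\|$ cannot be absorbed into the index-form term by Young's inequality without a coefficient of $\|f\|^2$ that scales like $\|\mathcal A\|_{C^1}^2$. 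This is precisely the obstruction that \cite{PSUGAFA} identified, and which is addressed there --- and in the paper --- by two ingredients your sketch omits: (i) introducing an auxiliary \emph{scalar} Hermitian connection $sa$ of prescribed strictly negative curvature $i\star F_{sa}=-s$, which injects a helpful term $s(v,iVv)=s\sum_{k<0}|k|\,\|v_k\|^2$ into the Pestov identity; and (ii) conjugating by the scalar \emph{holomorphic integrating factor} $e^{sw}$ (with $Xw=-iX_\perp h$) and projecting onto negative Fourier harmonics $v_s=\Pi_-(e^{sw}W)$, so that the problematic attenuation terms can be re-expanded and dominated by $s(v_s,iVv_s)$ for $s\gtrsim\|\mathcal A\|_{C^1}$ (Lemma \ref{lem:s}). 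The exponential factor in \eqref{eq:cPhiPsi} is \emph{not} a Gr\"onwall estimate on the unitary parallel transport (which is isometric, so the naive $L^2$ bound $\|W\|_{L^2(SM)}\lesssim\|f\|_{L^2(M)}$ has no exponential at all); it comes from $e^{sw_\infty}$ with $s\propto\|\mathcal A\|_{C^1}$ appearing when conjugating back and forth with $e^{\pm sw}$. Without this machinery the Pestov identity does not close into a stability estimate, and the final step of your sketch is incorrect.

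A minor secondary point: your target linear estimate has $e^{C_2\|\mathcal A\|_{L^\infty}}$ in the exponent, whereas the paper obtains $e^{C_2\|\mathcal A\|_{C^1}}$; the $C^1$-dependence is forced precisely because the conjugation parameter must dominate the $C^1$-norm (to control the $X_\perp\mathcal A$ term), not merely the $L^\infty$-norm.
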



The proof of Theorem \ref{thm:stability} initially follows the approach for obtaining $L^{2}\to H^{1}$ stability estimates for the geodesic X-ray transform $I$ as presented  in \cite[Theorem 3.4.3]{Sharafut99}. Our starting point is the pseudo-linearisation formula 
\[ C_{\Phi}C_{\Psi}^{-1}= \id+I_{\Theta(\Phi,\Psi)}(\Phi-\Psi)\]
where $I_{\Theta(\Phi,\Psi)}$ is a geodesic X-ray transform with suitable weights, see Lemma \ref{lem:pseudolin}. To prove Theorem \ref{thm:stability} it suffices to show that
\[\norm{\Phi-\Psi}_{L^{2}(M)}\leq c(\Phi,\Psi)\norm{I_{\Theta(\Phi,\Psi)}(\Psi-\Phi)}_{H^{1}(\partial_{+}SM)}.\]
To this end, we use the energy identity (Pestov Identity) developed in \cite{PSUGAFA} for matrix weights arising for connections and matrix fields. The presence of the weights produces additional terms in the identity that need to be controlled to obain the estimate above and this is where most of the work lies.  The main idea for controlling them comes from \cite{PSUGAFA} where a connection with the right curvature is artificially introduced to control these terms. The connection is later removed by using (scalar) {\it holomorphic integrating factors} whose existence is guaranteed by the microlocal properties of the normal operator associated to the geodesic X-ray transform acting on functions.
Taming these integrating factors has a cost which is reflected in the constant $c(\Phi,\Psi)$ given in \eqref{eq:cPhiPsi}.

For the proof of Theorem \ref{main} below we also require `forward'  estimates in Sobolev and H\"older scales. These are less sophisticated in nature than the stability estimate above, and hold under less restrictive assumptions. Recall that $(M,g)$ is said to be {\it non-trapping} if there is no geodesic with infinite length (any simple manifold is non-trapping). 

\begin{Theorem} \label{thm:forward}
    Let $(M,g)$ be a non-trapping surface with strictly convex boundary. For any integer $k\ge 0$ and for every $\Phi, \Psi\in C^k(M,\mathfrak{u}(n))$, the following continuity estimates hold: 
    \begin{align}
	\|C_\Phi - C_\Psi \|_{H^k(\partial_+ SM,\C^{n\times n})} &\lesssim (1+ \norm{\Phi}_{C^k} + \norm{\Psi}_{C^k})^k \|\Phi-\Psi\|_{H^k(M, \C^{n\times n})}, \label{eq:cont0} \\ 
	\|C_\Phi - C_\Psi \|_{C^k(\partial_+ SM,\C^{n\times n})} &\lesssim (1+ \norm{\Phi}_{C^k} + \norm{\Psi}_{C^k})^k \|\Phi-\Psi\|_{C^k(M, \C^{n\times n})}, \label{eq:continf}
    \end{align}
    where by $\lesssim$ we mean that the inequality holds with some constant that only depends on $M$, $g$ and $k$. 
\end{Theorem}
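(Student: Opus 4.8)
The plan is to prove both estimates \eqref{eq:cont0} and \eqref{eq:continf} in parallel, exploiting the fact that $C_\Phi(x,v)=U^\Phi_\gamma(0)$ where, along the geodesic $\gamma=\gamma_{x,v}$, the matrix $U^\Phi_\gamma$ solves the linear ODE $\dot U+\Phi(\gamma(t))U=0$ with $U(T)=\id$. First I would recall the integral (Volterra) representation $U^\Phi_\gamma(t)=\id+\int_t^{T}\Phi(\gamma(s))U^\Phi_\gamma(s)\,ds$ and use Gr\"onwall to get the a priori bound $\|U^\Phi_\gamma\|_{L^\infty([0,T])}\le e^{L\|\Phi\|_{L^\infty}}$, where $L=L(M,g)$ is an upper bound on geodesic lengths (finite since $M$ is non-trapping with strictly convex boundary, which also gives $T=\tau(x,v)$ smooth on $\partial_+SM$). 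Subtracting the equations for $\Phi$ and $\Psi$, the difference $W_\gamma:=U^\Phi_\gamma-U^\Psi_\gamma$ satisfies $\dot W+\Phi(\gamma)W=-(\Phi-\Psi)(\gamma)U^\Psi_\gamma$ with $W(T)=0$, so another Gr\"onwall/Volterra estimate yields the pointwise bound $|C_\Phi(x,v)-C_\Psi(x,v)|\lesssim e^{L(\|\Phi\|_\infty\vee\|\Psi\|_\infty)}\int_0^T|(\Phi-\Psi)(\gamma_{x,v}(s))|\,ds$. This already delivers the $k=0$ case of \eqref{eq:continf}, and integrating the square of this pointwise bound over $\partial_+SM$, together with the standard fact that the (unweighted) X-ray transform $f\mapsto\int_0^\tau f(\gamma(s))\,ds$ is bounded $L^2(M)\to L^2(\partial_+SM)$ on a non-trapping surface with convex boundary (Santal\'o's formula), gives the $k=0$ case of \eqref{eq:cont0}.

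Next I would treat the higher-order estimates by differentiating in the $(x,v)$-variables. The clean way is to work on $SM$ with the geodesic vector field $X$ and the horizontal/vertical vector fields, but a more hands-on route suffices here: introduce the fundamental solution/parallel-transport-type flow and note that the map $(x,v)\mapsto C_\Phi(x,v)$ is obtained by composing $\Phi$ with the geodesic flow and solving the ODE, so by Duhamel the derivatives $T^{\balpha}C_\Phi$ (for $|\balpha|\le k$, with $\{T_j\}$ the commuting global frame on $\partial_+SM$) are expressed through the fundamental solution $\Pi^\Phi_\gamma(t,s)$ of the homogeneous system and integrals over $\gamma$ of expressions that are polynomial in the derivatives of $\Phi$ up to order $k$, the derivatives of the exit time $\tau$, and the Jacobi-field-type derivatives of the geodesic flow (all controlled by $(M,g)$ since $M$ is non-trapping with strictly convex boundary and hence has bounded geometry up to the boundary). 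Differentiating the equation for $W_\gamma$ the same number of times and applying Gr\"onwall repeatedly produces, for each multi-index, a sum of terms each containing exactly one factor of a derivative of $\Phi-\Psi$ of order $\le k$ and at most $k$ factors of derivatives of $\Phi$ or $\Psi$ of order $\le k$; this is the source of the polynomial prefactor $(1+\|\Phi\|_{C^k}+\|\Psi\|_{C^k})^k$. Summing over $|\balpha|\le k$ and taking $\sup$ over $\partial_+SM$ gives \eqref{eq:continf}; integrating the squares and invoking again the $L^2\to L^2$ boundedness of the X-ray transform applied to the $\le k$-th derivatives of $\Phi-\Psi$ gives \eqref{eq:cont0}.

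The main technical obstacle is bookkeeping: one must verify that no derivative lands twice on $\Phi-\Psi$ (so the estimate stays \emph{linear} in $\Phi-\Psi$, as claimed) and that every other factor is a derivative of $\Phi$ or $\Psi$ of order at most $k$, while the geometric quantities (the exit time $\tau$, the geodesic flow $\varphi_t$ and its $\le k$ derivatives, the Jacobi equation) contribute only constants depending on $(M,g,k)$. The cleanest way to organise this is by induction on $k$: the Duhamel formula expresses $W_\gamma$ as an integral transform of $(\Phi-\Psi)\circ\gamma$ with a kernel built from $\Pi^\Phi$ and $\Pi^\Psi$; one differentiates this formula, uses the Leibniz rule, and reduces each new term to a lower-order instance of the same type plus remainders handled by the inductive hypothesis, Gr\"onwall, and the $k=0$ step. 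A minor additional point is that the frame $\{T_j\}$ on $\partial_+SM$ must be related to differentiation along and transverse to geodesics; since $\partial_+SM$ is a fixed compact manifold this is a bounded change of frame contributing only $(M,g,k)$-constants. With these ingredients the two displays follow, and the constants are seen to depend only on $M$, $g$, and $k$ as asserted.
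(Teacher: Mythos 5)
Your $k=0$ argument matches the paper's: a Volterra/Gr\"onwall estimate gives the pointwise bound, and Santal\'o's formula (or equivalently the $L^2(M)\to L^2(\partial_+SM)$ boundedness of the X-ray transform, which relies on $\tau\lesssim\mu$ on a strictly convex boundary) converts it into the $L^2$ estimate. For $k\ge1$ you also correctly identify the right structure: differentiate the transport equation with a commuting frame, apply Leibniz, observe that exactly one derivative lands on $\Phi-\Psi$, and induct. This is essentially the paper's Lemma~\ref{lem:workhorse0} combined with the commuting frame $\{X,P_T,P_V\}$ of Lemma~\ref{lem:PVPT}.

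However, there is a genuine gap. Your proof treats the geometric quantities ($\tau$, $\varphi_t$ and ``its $\le k$ derivatives'') as uniformly bounded on $\partial_+SM$ ``since $M$ is non-trapping with strictly convex boundary''. This is not true: $\tau$ and the solution operator are \emph{not} smooth up to the glancing set $\partial_0SM=\{\mu=0\}$, and the higher derivatives of $\tau$ (and of $U_\Phi$, $U_\Psi$) in the $T,V$-directions blow up there. The paper's Lemma~\ref{lemma:tau} only asserts that the \emph{first} derivatives $V\tau,T\tau$ remain bounded on $SM\setminus\partial_0SM$, and even this requires a separate argument. This is precisely why the paper splits the proof into two stages: first it proves the estimates for $\Phi,\Psi$ compactly supported in $M^{\mathrm{int}}$, where $U_\Phi,U_\Psi$ equal $\id$ near $\partial_-SM$ and $W|_{\partial_-SM}=0$ is automatic and the whole machinery is smooth up to the boundary; then, for general $\Phi,\Psi$, it constructs smooth integrating factors $R_\Phi=U_{\widetilde\Phi}|_{SM}$ by Seeley-extending $\widetilde\Phi=\chi E_k(\Phi)$ to a larger non-trapping domain $\widetilde M$, writes $C_\Phi=R_\Phi(R_\Phi^{-1}\circ\alpha)$ in terms of the scattering relation $\alpha$, and reduces the higher-order estimates to products and compositions of quantities that \emph{are} $C^k$ up to the boundary (eqs.~\eqref{eq:cruxExtension}--\eqref{eq:Rdiff}). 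Your induction, as written, would require bounds on $|\bP^{\balpha}U_\Phi|$ or on derivatives of $\tau$ and of the flow up to order $k$ on all of $\partial_+SM$, which do not hold without this extension step. Additionally, your claim that ``the frame $\{T_j\}$ on $\partial_+SM$ must be related to differentiation along and transverse to geodesics\ldots{} a bounded change of frame'' glosses over the nontrivial construction that makes the frame actually \emph{commute with $X$} (Lemma~\ref{lem:PVPT}), which is what prevents a proliferation of lower-order commutator terms in the induction.
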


In fact in the proof of Theorem \ref{main} we shall use instead of Theorem \ref{thm:stability} the following corollary of the previous two results:

\begin{Corollary}\label{cor:stab2}
    Under the same hypotheses as in Theorem \ref{thm:stability} and $c(\Phi,\Psi)$ as in \eqref{eq:cPhiPsi}, then
    \begin{equation} \label{stabi}
	\|\Phi - \Psi\|_{L^2(M)} \le C'\ c(\Phi,\Psi) (1+\norm{\Psi}_{C^1}) \norm{C_{\Phi}-C_{\Psi}}_{H^{1}(\partial_+ SM)},
    \end{equation}
    where $C'$ is independent of $\Phi$ or $\Psi$.
\end{Corollary}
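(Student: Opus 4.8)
The plan is to deduce \eqref{stabi} from Theorem \ref{thm:stability} by trading the ``relative'' residual $\norm{C_\Phi C_\Psi^{-1}-\id}_{H^1(\partial_+SM)}$ appearing there for the ``absolute'' residual $\norm{C_\Phi-C_\Psi}_{H^1(\partial_+SM)}$, paying a price measured by $\norm{C_\Psi}_{C^1(\partial_+SM)}$, which is then controlled by the forward estimate of Theorem \ref{thm:forward}.

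\textbf{Step 1 (algebraic identity).} Since $\Psi\in C^1(M,\u(n))$ forces $C_\Psi(x,v)\in U(n)$ for every $(x,v)\in\partial_+SM$, the field $C_\Psi^{-1}=C_\Psi^{*}$ is globally defined and smooth, and one has pointwise on $\partial_+SM$ the identity $C_\Phi C_\Psi^{-1}-\id=(C_\Phi-C_\Psi)C_\Psi^{-1}$.

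\textbf{Step 2 (a Leibniz estimate in $H^1$).} The Sobolev space $H^1$ of the two-dimensional manifold $\partial_+SM$ is at the borderline exponent and is not a Banach algebra, so the product in Step 1 cannot be handled by a Sobolev embedding; instead I use that $C_\Psi^{-1}$ is pointwise unitary. With $F=C_\Phi-C_\Psi$ and $G=C_\Psi^{-1}$, unitarity of $G(y)$ gives $\frob{F(y)G(y)}=\frob{F(y)}$, and for each element $T_j$ of a fixed global commuting frame on $\partial_+SM$ one has $T_j(FG)=(T_jF)G+F(T_jG)$ with $\frob{(T_jF)(y)G(y)}=\frob{(T_jF)(y)}$ and $\frob{F(y)(T_jG)(y)}\le\frob{F(y)}\,\frob{(T_jG)(y)}$ by sub-multiplicativity of the Frobenius norm. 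Squaring and integrating over $\partial_+SM$ yields
\[
\norm{C_\Phi C_\Psi^{-1}-\id}_{H^1(\partial_+SM)}\ \lesssim\ \big(1+\norm{C_\Psi^{-1}}_{C^1(\partial_+SM)}\big)\,\norm{C_\Phi-C_\Psi}_{H^1(\partial_+SM)},
\]
with a constant depending only on $(M,g)$ and $n$. Finally $\norm{C_\Psi^{-1}}_{C^1(\partial_+SM)}=\norm{C_\Psi}_{C^1(\partial_+SM)}$, because adjunction commutes with the (real) frame vector fields and preserves the Frobenius norm.

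\textbf{Step 3 (controlling $\norm{C_\Psi}_{C^1(\partial_+SM)}$).} Comparing $C_\Psi$ with the scattering data $C_0\equiv\id$ of the zero field and invoking \eqref{eq:continf} of Theorem \ref{thm:forward} with $k=1$ (and $\Phi$ replaced by $0$) gives $\norm{C_\Psi-\id}_{C^1(\partial_+SM)}\lesssim(1+\norm{\Psi}_{C^1(M)})\norm{\Psi}_{C^1(M)}$; adding the dimensional constant $\norm{\id}_{C^1(\partial_+SM)}$ bounds $\norm{C_\Psi}_{C^1(\partial_+SM)}$. One in fact obtains the sharper linear bound $\norm{C_\Psi}_{C^1(\partial_+SM)}\lesssim 1+\norm{\Psi}_{C^1(M)}$ directly: rescaling geodesic time to $[0,1]$, $C_\Psi(x,v)$ is the value at $0$ of the solution of a linear matrix ODE on $[0,1]$ with skew-Hermitian coefficient of size $\lesssim\norm{\Psi}_{C^0(M)}$ and whose coefficient depends on $(x,v)$ with $C^1$-norm $\lesssim 1+\norm{\Psi}_{C^1(M)}$ (the geodesic variation fields and the exit-time function being controlled by $(M,g)$ alone); differentiating the ODE in $(x,v)$ and using that the homogeneous propagator is unitary, hence non-expansive — so that Grönwall produces no exponential factor — controls $\norm{\nabla C_\Psi}_{L^\infty(\partial_+SM)}$ by $\lesssim 1+\norm{\Psi}_{C^1(M)}$.

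\textbf{Conclusion.} Chaining Theorem \ref{thm:stability} with Steps 1--3 gives
\[
\norm{\Phi-\Psi}_{L^2(M)}\ \le\ c(\Phi,\Psi)\,\norm{C_\Phi C_\Psi^{-1}-\id}_{H^1(\partial_+SM)}\ \le\ C'\,c(\Phi,\Psi)\,(1+\norm{\Psi}_{C^1(M)})\,\norm{C_\Phi-C_\Psi}_{H^1(\partial_+SM)},
\]
with $c(\Phi,\Psi)$ as in \eqref{eq:cPhiPsi} and $C'$ depending only on $M$, $g$ and $n$, which is exactly \eqref{stabi}. I expect the only point requiring genuine care to be Step 2: being at the critical Sobolev exponent on a surface, one must exploit the orthogonality of the cocycle $C_\Psi$ (equivalently, the structure-group constraint $\Psi\in\u(n)$) both to make the multiplier estimate work at all and, in Step 3, to keep the dependence on $\norm{\Psi}_{C^1(M)}$ from degrading beyond what is claimed; the remaining steps are routine bookkeeping.
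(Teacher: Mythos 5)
Your argument is correct and follows essentially the same route as the paper: reduce \eqref{stabi} to the multiplier estimate $\norm{C_\Phi C_\Psi^{-1}-\id}_{H^1}\lesssim(1+\norm{\Psi}_{C^1})\norm{C_\Phi-C_\Psi}_{H^1}$ via the identity $C_\Phi C_\Psi^{-1}-\id=(C_\Phi-C_\Psi)C_\Psi^{-1}$, the Leibniz rule with $U(n)$-invariance of the Frobenius norm, and then control $\norm{C_\Psi}_{C^1(\partial_+SM)}$ by the forward estimate with $\Phi\equiv 0$. Your Step 3 observation that one needs the \emph{linear} bound $\norm{PC_\Psi}_{L^\infty}\lesssim\norm{\Psi}_{C^1}$ (rather than the quadratic bound a literal reading of \eqref{eq:continf} gives) is well taken, and your unitary-propagator/Duhamel argument for it is precisely the intermediate estimate $\norm{PU_\Psi}_{L^\infty}\le C_1\norm{P\Psi}_{L^\infty}$ already established in the proof of Theorem \ref{thm:forward}.
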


\section{Bayesian inversion of non-Abelian $X$-ray transforms} \label{bayes101}

\subsection{Main results}

The main goal of this section is to introduce a method to infer the matrix field $\Phi \in C(M,\so(n))$ from discrete observations $D_N$ of the scattering data $C_\Phi$ described in Section \ref{obs}. We follow the general paradigm of \textit{Bayesian inverse problems} advocated by A. Stuart \cite{S10, DS16} which is also related to the paradigm of Bayesian numerical analysis \cite{D88, G19} in the noiseless case ($\sigma=0$). The idea is to start from a \textit{Gaussian process prior} $\Pi$ for the parameter $\Phi$ and to use Bayes' theorem to infer the best \textit{posterior guess} for $\Phi$ given data $D_N$.

\smallskip

We will state a theorem that shows that the posterior mean fields $\bar \Phi_N=E^\Pi[\Phi|D_N]$ corresponding to a flexible class of Lie-algebra valued Gaussian process priors $\Pi$ for $\Phi$ consistently recover the `true' $\Phi_0$ in the frequentist large sample limit as $N \to \infty$, when noisy experiments have been performed under $P^N_{\Phi_0}$ in the model (\ref{model}). In fact we will provide a stochastic convergence rate to zero of the recovery error that is algebraic in inverse sample size $1/N$. 

\smallskip

The proof of Theorem \ref{main} below provides a template to establish rigorous statistical guarantees for the Bayesian approach to other non-linear inverse problems as well. See Section \ref{prfbayes} and Remark \ref{genrem} for more discussion.

\smallskip

We emphasise that obtaining probabilistic consistency under $P_{\Phi_0}^N$ entails approximate uniformity of the design $(X_i, V_i)$ and rules out `adversarial' designs. Fixed (non-random) design $(x_i, v_i)$ that is sufficiently `equally spaced' throughout $\partial_+ SM$ could be considered as well in the theory that follows, either via appealing to asymptotic statistical equivalence results in nonparametric regression \cite{R08} or by tracking the numerical discretisation error explicitly through all the proofs that follow. For the purposes of the present paper we opt for the random design setting as it allows for a cleaner, unified probabilistic treatment of the measurement process.

\smallskip

To introduce the Bayesian approach more concisely, consider a prior $\Pi$ for a vector field $(B_1, \dots, B_{\bar n})$ by prescribing a  Borel probability measure on the space $\times_{j=1}^{\bar n} C(M)$ where $$\bar n = \frac{n (n-1)}{2}=\dim(\so(n)).$$ The natural isomorphism between $\times_{j=1}^{\bar n} C(M)$ and the space $C(M, \so(n))$ of continuous functions from $M$ to $\so(n)$ in turn generates a prior $\Pi$ for $\Phi$ by forming a $\so(n)$-valued field from the $B_i$'s. For instance in the case $n=3$ so that also $\bar n =3$, relevant in PNT, we construct $\Pi$ from
\begin{align}\label{matr}
\Phi(x) = \left[ \begin{matrix} 0 & B_{3}(x)&-B_{2}(x) \\- B_{3}(x) & 0&B_{1}(x)\\B_{2}(x)&-B_{1}(x)&0 \end{matrix} \right], ~x \in M.
\end{align}
Then we make the Bayesian model assumption that $$(Y_{i}, (X_i, V_i))_{i=1}^N|\Phi \sim P_\Phi^N~~\text{on }(\mathbb R^{n \times n} \times \partial_+SM)^N $$ which by Bayes' rule generates a conditional posterior distribution of $\Phi|(Y_{i}, (X_i, V_i))_{i=1}^N$ on $C(M, \so(n))$ --  it will be denoted by $\Pi(\cdot|(Y_{i}, (X_i, V_i))_{i=1}^N) \equiv \Pi(\cdot|D_N)$. The posterior distribution arises from a dominated family of probability measures (see (\ref{moddens}) below) and is hence given by
\begin{equation}\label{post}
\Pi(A|D_N)\equiv \Pi(A|Y_1, \dots, Y_N, (X_1, V_1), \dots, (X_N, V_N)) = \frac{\int_A e^{\ell_N(\Phi)}d\Pi(\Phi)}{\int_{} e^{\ell_N(\Phi)}d\Pi(\Phi)},
\end{equation}
for any Borel set $A$ in $C(M, \so(n))$. Here
\begin{equation} \label{likpo}
\ell_N(\Phi) = \sum_{i \le N}\ell_i(\Phi),~\text{ where }\ell_i(\Phi)  =  -\frac{1}{2\sigma^2}\sum_{1 \le j,k \le n}\big[Y_{i,j,k} -C_\Phi((X_i,V_i))_{j,k} \big]^2,
\end{equation}
is, up to additive constants, the \textit{log-likelihood function} of the observations. 
\smallskip

While what precedes was not specific to the choice of a particular prior, the main theorem to follow will hold for priors arising from certain $\so(n)$-valued \textit{Gaussian processes}. These will be constructed from a Gaussian base prior $\Pi'$ from which the coordinates $B_j$ of $\times_{j=1}^{\bar n} C(M)$ will be drawn independently. In fact we will require draws from $\Pi'$ to have $\beta$-H\"older continuous sample paths on $M$ almost surely. We refer, e.g., to \cite[Sections 2.1 and 2.6]{GN16} for the basic definitions of Gaussian measures and processes and their reproducing kernel Hilbert spaces (RKHS).
\begin{Condition}\label{pco}
For $\beta>0$ and $\alpha>\beta+1$, let $\Pi'$ be a centred Gaussian Borel probability measure on the Banach space $C(M)$ that is supported in a separable (measurable) linear subspace of $C^\beta(M)$, and assume its RKHS $(\mathcal H, \|\cdot\|_{\mathcal H})$ is continuously imbedded into the Sobolev space $H^\alpha(M)$. 
\end{Condition}
See Remark \ref{pcoex} for concrete examples and constructions of such Gaussian process priors with `maximal choice' $\mathcal H=H^\alpha(M)$ and arbitrary $\alpha>\beta+1$. 

Now given a random draw $f' \sim \Pi'$ we define a new random function 
  \begin{equation} \label{tampering}
  B(x) = B_N(x) =  \frac{f'(x)}{\sqrt{N^{1/(\alpha+1)}}},~x \in M, ~ f' \sim \Pi',
  \end{equation}
   and denote its law in $C(M)$ by $\Pi_B=\Pi_{B,N}$. Then let $B_1, \dots, B_{\bar n}$ be random functions on $M$ drawn as i.i.d.~copies from $\Pi_B$, and let the prior $\Pi= \times_{j=1}^{\bar n} \Pi_B$ for $\Phi$ be the resulting centred Gaussian product probability measure in the space $C(M, \so(n))\simeq \times_{j=1}^{\bar n} C(M)$ (see (\ref{matr}) for $n=3$). Shrinking the prior towards the origin in a $N$-dependent way as in (\ref{tampering}) is crucial in our proofs, see Remark \ref{scaling} for discussion.

The following theorem gives a bound for the convergence rate of the posterior mean 
\begin{equation} \label{pmean}
    \bar \Phi_N= \bar \Phi((Y_{i}, (X_i, V_i))_{i=1}^N)= E^{\Pi}[\Phi |(Y_{i}, (X_i, V_i))_{i=1}^N]
\end{equation}
towards the true field $\Phi_0$ in $L^2(M)$-loss, under the law $P_{\Phi_0}^N$ of the observations. Note that this mean (expected value) is understood in the usual sense of Bochner integrals and hence $\bar \Phi$ takes values in $C(M, \so(n))$ -- for fixed data vector $Y_{i}, (X_i, V_i)$ and since for $C_\Phi \in SO(n)$ the norms $\|C_\Phi\|_{L^\infty}$ are bounded by a fixed constant, this expected value exists almost surely by (\ref{post}) and a basic application of Fernique's theorem (see \cite[Exercise 2.1.5]{GN16}).  Let us say $\Phi \in \mathcal H$ if all matrix entries of $\Phi$ are contained in $\mathcal H$.
   
 \begin{Theorem}\label{main}
Suppose the Gaussian prior $\Pi$ for $\Phi$ arises as after (\ref{tampering}) with base prior $\Pi'$ satisfying Condition \ref{pco} for $\alpha>\beta+1, \beta>2$. Let $\bar \Phi_N$ be the mean (\ref{pmean}) of the posterior distribution $\Pi(\cdot|(Y_{i}, (X_i, V_i))_{i=1}^N)$ arising from observations (\ref{model}). Assume $\Phi_0 \in C^\alpha(M, \so(n)) \cap \mathcal H$. Then we have, for some $\eta>0$
$$P_{\Phi_0}^N \Big(\|\bar \Phi_N - \Phi_0\|_{L^2(M)} >N^{-\eta} \Big) \to 0~~\text{as } N \to \infty.$$ 
 \end{Theorem}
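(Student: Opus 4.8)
The plan is to follow the standard recipe for establishing frequentist consistency of Bayesian nonparametric procedures in (nonlinear) inverse problems --- as in \cite{vdVvZ08, GvdV17, MNP19b} --- combining (i) an information-theoretic lower bound on the log-likelihood ratio along a net of `admissible' fields, (ii) a concentration/contraction argument for the posterior around $\Phi_0$ in the `forward' (Hellinger/prediction) distance, and (iii) the stability estimate of Corollary \ref{cor:stab2} to upgrade this to $L^2(M)$-contraction for the parameter $\Phi$. Concretely, first I would record that, conditional on the design, the statistical model (\ref{model}) is a Gaussian regression model, so that the squared Hellinger distance and the Kullback--Leibler divergence between $P_\Phi^1$ and $P_\Psi^1$ are both comparable (up to constants depending on $\sigma$) to $\|C_\Phi - C_\Psi\|_{L^2(\lambda)}^2$, and by Theorem \ref{thm:forward} (the $H^0=L^2$ case of (\ref{eq:cont0})) this is in turn controlled by $\|\Phi-\Psi\|_{L^2(M)}^2$ whenever $\|\Phi\|_{C^0}\vee\|\Psi\|_{C^0}$ stays bounded. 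This gives the `forward continuity' needed to translate prior mass near $\Phi_0$ into the usual prior-mass (small-ball) condition on Kullback--Leibler neighbourhoods.

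\textbf{Key steps.} (1) \emph{Prior support / small-ball estimate.} Using Condition \ref{pco}, the assumption $\Phi_0 \in \mathcal H$, and the $N$-dependent rescaling (\ref{tampering}), I would lower-bound $\Pi(\|\Phi - \Phi_0\|_{C^0(M)} \le \varepsilon_N)$ for a suitable rate $\varepsilon_N \to 0$ polynomial in $N$; this is a standard Gaussian small-ball computation where the decentering cost is $\exp(-c N^{1/(\alpha+1)}\|\Phi_0\|_{\mathcal H}^2)$ (controlled because of the shrinkage) and the centered small-ball exponent is governed by the metric entropy of the unit ball of $H^\alpha$ in $C^0$, hence of polynomial order in $1/\varepsilon_N$. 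Because forward distances dominate parameter $C^0$-distance only in the wrong direction, here the relevant direction is fine: $C^0$-closeness of $\Phi$ to $\Phi_0$ implies (via Theorem \ref{thm:forward}) smallness of the KL-type quantities, so the Ghosal--Ghosh--van der Vaart prior-mass condition holds with rate $\varepsilon_N$. (2) \emph{Sieves and entropy / excess risk.} I would introduce sieve sets $\mathcal{B}_N = \{\|\Phi\|_{C^\beta} \le M_N\}$ with $M_N$ a slowly growing power of $N$, so that by Gaussian concentration (Borell--Sudakov--Tsirelson, using $\beta$-H\"older sample paths) $\Pi(\mathcal{B}_N^c)$ is exponentially small, in fact small enough to beat $e^{-CN\varepsilon_N^2}$; on $\mathcal{B}_N$ the fields are uniformly bounded in $C^1$ (since $\beta > 2$), so Theorem \ref{thm:forward} (e.g. (\ref{eq:cont0}) with $k=1$) gives a uniform modulus of continuity for $\Phi \mapsto C_\Phi$ into $H^1(\partial_+SM)$ with polynomially-in-$M_N$ constants, and the $L^2(\lambda)$-metric entropy of $\{C_\Phi : \Phi \in \mathcal{B}_N\}$ is controlled by that of $\mathcal{B}_N$. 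This yields, via the standard testing construction (Le Cam / Birg\'e tests against $L^2(\lambda)$-balls in the forward data, as in \cite[Section 7.1]{GvdV17}), that the posterior concentrates on $\{\|C_\Phi - C_{\Phi_0}\|_{L^2(\lambda)} \le \delta_N\}\cap \mathcal{B}_N$ with $P_{\Phi_0}^N$-probability $\to 1$, for a rate $\delta_N$ that is a small power of $1/N$. (3) \emph{Upgrading the forward rate to $H^1$ and then to $L^2(M)$.} On the sieve $\mathcal{B}_N$ I would interpolate: $\|C_\Phi - C_{\Phi_0}\|_{H^1}$ is bounded by $\|C_\Phi - C_{\Phi_0}\|_{L^2}^{\theta}\|C_\Phi - C_{\Phi_0}\|_{H^{s}}^{1-\theta}$ for some $s > 1$, and the high-norm factor is polynomially bounded on $\mathcal{B}_N$ (take $\beta$ slightly larger, or use (\ref{eq:cont0}) at level $k$ and the $C^\beta$-bound), so the $L^2(\lambda)$-rate $\delta_N$ on $C_\Phi - C_{\Phi_0}$ gives an $H^1(\partial_+SM)$-rate that is still a (smaller) power of $1/N$. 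Then Corollary \ref{cor:stab2}, together with the uniform $C^1$-bound on $\Phi$ over $\mathcal{B}_N$ (which makes $c(\Phi,\Phi_0)(1+\|\Phi_0\|_{C^1})$ at most polynomial in $M_N$), converts this into $\|\Phi - \Phi_0\|_{L^2(M)} \le N^{-\eta}$ on an event of posterior probability $\to 1$. (4) \emph{From posterior contraction to the posterior mean.} Finally, since $C_\Phi \in SO(n)$ is uniformly bounded, the posterior has uniformly integrable $L^2(M)$-moments (Fernique plus the likelihood bound, exactly as in the paragraph preceding Theorem \ref{main}); hence Jensen's inequality and a standard truncation argument give $\|\bar\Phi_N - \Phi_0\|_{L^2(M)} \le E^\Pi[\|\Phi - \Phi_0\|_{L^2(M)}\,|\,D_N] \to 0$ in $P_{\Phi_0}^N$-probability at the same polynomial rate (after possibly shrinking $\eta$), which is the assertion.

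\textbf{Main obstacle.} The delicate point is \emph{quantitative bookkeeping of the three competing polynomial rates}: the prior-mass rate $\varepsilon_N$ (driven by $N^{1/(\alpha+1)}$ from the shrinkage (\ref{tampering}) and by the $H^\alpha \hookrightarrow C^0$ entropy), the sieve radius $M_N$ (which must grow slowly enough that $\Pi(\mathcal{B}_N^c) \ll e^{-CN\varepsilon_N^2}$ yet the entropy of $\mathcal{B}_N$ does not swamp $N\varepsilon_N^2$), and the loss incurred in the stability estimate Corollary \ref{cor:stab2} where the constant $c(\Phi,\Psi) = C_1(1+\|\Phi\|_{C^1}\vee\|\Psi\|_{C^1})e^{C_2(\|\Phi\|_{C^1}\vee\|\Psi\|_{C^1})}$ is \emph{exponential} in the $C^1$-norm. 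This exponential dependence is the crux: it forces $M_N$ to grow only logarithmically-slowly relative to a power of $N$ (so that $e^{C_2 M_N}$ remains $N^{o(1)}$), which in turn constrains how much sieve entropy one can afford and ultimately determines the final exponent $\eta > 0$. Getting a consistent choice of $(\varepsilon_N, M_N, \delta_N, \eta)$ that simultaneously satisfies the prior-mass condition, the sieve-complement bound, the testing/entropy bound, and the stability conversion --- while also respecting the interpolation step that trades $L^2(\lambda)$ for $H^1(\partial_+SM)$ regularity of the forward data --- is where essentially all the work of the proof will lie; everything else is an application of now-standard machinery.
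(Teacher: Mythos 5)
Your overall strategy is correct and matches the paper's proof closely: small-ball prior mass, sieves, Ghosal–van der Vaart testing in the forward (Hellinger $\simeq L^2$) metric, interpolation to an $H^1(\partial_+SM)$ rate, the stability estimate of Corollary~\ref{cor:stab2}, and a uniform-integrability argument for the posterior Bochner mean — all of these appear in the paper's Section~\ref{prfbayes} in essentially the order you describe.

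The one place where your account of "the crux" is not quite how the paper resolves it: you propose sieves $\mathcal{B}_N=\{\|\Phi\|_{C^\beta}\le M_N\}$ with $M_N$ slowly growing, and then worry that the exponential stability constant $e^{C_2 M_N}$ forces $M_N=o(\log N)$. In fact the prior shrinkage $(\ref{tampering})$ — which you do include — is precisely what lets the paper take $M_N=m$ a \emph{fixed} constant and dispose of the exponential entirely. Concretely, with $B=f'/(\sqrt{N}\delta_N)$ and $f'\sim\Pi'$, Fernique gives $E\|f'\|_{C^\beta}\le D$, whence Gaussian concentration yields $\Pi(\|B\|_{C^\beta}>m)=\Pi'(\|f'\|_{C^\beta}>m\sqrt N\delta_N)\le 2e^{-km^2N\delta_N^2}$ for any fixed $m>D$; this already beats the required $e^{-(2C+6)N\delta_N^2}$ excess-mass bound. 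Without the shrinkage the tail bound is $N$-independent, so one is pushed to $M_N\gtrsim\sqrt N\delta_N$ and the stability constant $e^{C_2M_N}$ genuinely blows up polynomially or worse — so a "growing $M_N$" version of your plan would not recover an algebraic $\eta>0$. Put differently: the shrinkage isn't just there to tame the decentering cost in the small-ball estimate, it is what keeps the posterior on a \emph{fixed} $C^\beta$-ball and hence keeps $c(\Phi,\Phi_0)$ bounded. Two smaller points of bookkeeping also worth flagging: (i) the paper does the small-ball computation in $\|\cdot\|_{L^2(M)}$ via Lemma~\ref{klrid} rather than in $C^0(M)$; your $C^0$ version is valid (and gives a worse but still positive $\eta$), but the $L^2$ route is sharper and matches the $\epsilon^{-2/\alpha}$ entropy bound actually used with \cite{LL99}. (ii) Your interpolation step with "some $s>1$" tacitly requires the forward estimate $\|C_\Phi-C_{\Phi_0}\|_{H^s}$ to be controlled; since Theorem~\ref{thm:forward} is proved only for integer $k$, the paper has to pick an integer $\bar\beta\in(1,\beta)$, which is exactly why the hypothesis $\beta>2$ appears (see Remark~\ref{beta}) — your outline should register the same constraint.
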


The proof is given in Section \ref{prfbayes}. We note that the constraint $\beta>2$ (and hence $\alpha>3$) could be relaxed to $\beta>1$ (and hence $\alpha>2$) at the expense of more technical proofs (see Remark \ref{beta}). We further remark that in the proof we  establish in particular that the random posterior measure $\Pi(\cdot|(Y_{i}, (X_i, V_i))_{i=1}^N)$ on $C(M, \so(n))$ concentrates with probability approaching one in a $N^{-\eta}$-diameter $L^2(M)$-ball centred at $\Phi_0$, see Theorem \ref{overall}.  
\subsection{Remarks and discussion}

  \begin{Remark}\normalfont [The exponent $\eta$.] In the proof (see (\ref{eta})) we show that $$\eta < \frac{\alpha}{(2\alpha+2)}\frac{\bar \beta-1}{\bar \beta},~~\text{ any integer } \bar \beta \text{ s.t. } 1<\bar \beta < \beta,$$ is permitted in the previous theorem. If $\Phi_0 \in C^\infty(M) = \cap_{\alpha>0} H^\alpha(M)$ and if we take priors $\Pi$ which verify Condition \ref{pco} for large enough $\alpha, \beta$ and $\mathcal H= H^\alpha(M)$ (possible by Remark \ref{pcoex}), then we can make $\eta$ as close to $1/2$ as desired, and it is easy to show that $\eta=1/2$ cannot be improved upon by any algorithm. So at least for smooth $\Phi_0$ the recovery guarantee from Theorem \ref{main} is (near-) optimal.  In the `low regularity case' where $\alpha$ is not large, our bound for $\eta$ may not be optimal. A conjecture for the optimal value for $\eta$ can be obtained from the much simpler linear and Abelian case ($n=1$) corresponding to the classical Radon transform, which is treated in \cite[Example 2.5]{NvdGW18}, where the exponent $\eta=\alpha/(2\alpha+3)$ is attained, which can be shown to be optimal in this special case.
 \end{Remark}

\begin{Remark}\normalfont [Construction of Gaussian priors.] \label{pcoex}
We describe here some Gaussian process priors verifying Condition \ref{pco} with $\mathcal H = H^\alpha(M)$.

\smallskip

As a first basic example consider the case where $M$ equals the unit disk $D=\{(x_1,x_2) \in \mathbb R^2: x_1^2+x_2^2 \le 1\}$ in $\mathbb R^2$ with `flat' (Euclidean) geometry, relevant in PNT.  For arbitrary $\alpha>0$ we can then take for $\Pi'$  the restriction to $D$ of a stationary Gaussian process on $\mathbb R^2$ with appropriate (Whittle-) Mat\'ern covariance function $k_\alpha$ (see \cite[p.313]{GvdV17} and Section \ref{impl} below). This gives a Gaussian prior on $C(D)$ with RKHS $\mathcal H$ equal to the space of restrictions to $D$ of elements of $H^\alpha(\mathbb R^2)$ (using Ex.2.6.5 in \cite{GN16}). This space is well known (e.g., \cite{T}, Ch.4) to co-incide with $H^\alpha(D)$, and the sample paths of this process lie in the separable subspace $C^{\beta_0}(D)$ of $C^\beta(D)$ for any $\beta<\beta_0<\alpha-1$, see \cite[p.575f]{GvdV17} for a proof. 

The preceding construction works for any smooth bounded domain $D$ in $\mathbb R^2$. In particular a simple surface $M$ is diffeo-morphic to a disc and the Sobolev spaces $H^\alpha(D)$ and $H^\alpha(M)$ co-incide with equivalent norms -- the Mat\'ern prior can thus be used even when $M$ equals $D$ equipped with a different Riemannian metric. Alternatively one can embed $M$ isometrically into a larger closed compact (boundary-less) manifold $S$ and use the orthonormal basis of eigenfunctions $\{e_k\}$ of the Laplace-Beltrami operator on $S$ to generate Gaussian random series $f_S(x)= \sum_k \sigma_k g_k e_{k} (x)$, $g_k \sim^{i.i.d.} N(0,1),~~x \in S,$ which after restriction to $M$ and for suitable choice of $\sigma_k>0$, generate Gaussian priors $\Pi$ with any prescribed Sobolev space $H^\alpha(M)$ as RKHS. 
\end{Remark}
 
 \begin{Remark}\label{scaling}\normalfont [Rescaled Gaussian Priors.]  While the use of Gaussian process techniques \cite{B75, F75, LL99} in the proof of Theorem \ref{main} is inspired by previous work in \cite{vdVvZ08, vdVvZ07} and also \cite{GN11} for `direct' problems, the inverse setting poses several challenges, particularly in the non-linear case. In our proofs we show how these challenges can be overcome by shrinking common Gaussian process priors towards the origin as in (\ref{tampering}) -- the shrinkage enforces the necessary additional `a-priori' regularisation of the posterior distribution to permit the use of our stability estimates. While similar re-scaled priors have been shown to work in some `direct' settings before (they appear as special cases of the rescaled priors studied in \cite{vdVvZ07}, see their Theorem 3.2), in our setting they play a crucial role: Without re-scaling the exponential growth in the $C^1$-norms of $\Phi$ of the constant (\ref{eq:cPhiPsi}) would render our stability estimate useless in the proofs.
 \end{Remark}
 
 \begin{Remark}\label{genrem} \normalfont [Related literature on Bayesian non-linear inverse problems.]
The study of statistical guarantees for the Bayesian approach to non-linear inverse problems has seen a recent surge of interest. In the references \cite{V13, NS17, N1} non-linear inverse problems of elliptic and parabolic type are studied. The results therein however only hold for specific `uniformly bounded wavelet' type priors -- while these are useful to develop a first theoretical understanding of Bayesian inversion algorithms, they posit very strong a priori assumptions on the parameter of interest and the efficient computability of the resulting posterior distribution is also unclear. 

The recent reference \cite{NvdGW18} obtains convergence rate results for optimisation based MAP-estimates (see Section \ref{sec:MCMC} for a brief discussion of those) in a general class of non-linear inverse problems. For non-linear forward maps as the ones relevant here, these MAP-estimates can be difficult to compute, and at any rate may behave quite differently from the posterior mean: The algorithm $E^{\Pi}[\Phi |(Y_{i}, (X_i, V_i))_{i=1}^N]$ studied here is a Bochner integral with respect to an infinite-dimensional and non-Gaussian posterior distribution and variational ideas from optimisation cannot be used directly in its analysis. In the proof of Theorem \ref{main} we develop new techniques that allow to prove convergence rates for such algorithms -- see Section \ref{prfbayes} for a discussion of the key ideas which are relevant in other settings, too. Indeed, the very recent references \cite{AN20, GN20} have already succeeded in adapting our proof template to other nonlinear inverse problems. For instance \cite{AN20} study statistical versions of a conceptually related boundary value problem arising with electrical impedance tomography (`Calder\'on problems'). Our results imply that statistical inversion of non-Abelian $X$-ray transforms (for `smooth parameters' $\Phi$) admits better (i.e., polynomial) convergence rates than the necessarily logarithmic (in inverse noise level) recovery guarantees derived in \cite{AN20} for the Cald\'eron problem (with smooth conductivities). \end{Remark}
 
 \begin{Remark}\normalfont [Towards Uncertainty Quantification.]
Theorem \ref{main} also serves as a starting point to prove more refined Bernstein-von Mises theorems that entail that the posterior distribution is approximated in a suitable infinite-dimensional space by a canonical Gaussian measure (cf.~\cite{CN13, CN14}). For a non-linear elliptic inverse problem a first result of this kind was recently proved in \cite{N1}, and for the linearisation of the non-linear problem considered here, such results were obtained in \cite{MNP}. In principle, joining the ideas of \cite{N1, MNP} with the techniques of the present paper, one can conjecture that Bernstein-von Mises theorems should also hold true for the case of non-Abelian $X$-ray transforms -- this is the subject of ongoing research.
  \end{Remark}

\section{Implementation of the algorithm}\label{impl}

In this section, we present some numerical reconstructions of an $\mathfrak{su}(2)$-valued matrix field $\Phi$ from its noisy scattering data $C_\Phi\in SU(2)$. In this case, $\Phi$ is generated by three real-valued components $B_1, B_2, B_3$, through the relation $\Phi = B_1\ \sigma_1 + B_2\ \sigma_2 + B_3\ \sigma_3$, where we have defined for basis of $\mathfrak{su}(2)$ 
\begin{align*}
    \sigma_1 = \frac{1}{2} \left[
	\begin{array}{cc}
	    i & 0 \\ 0 & -i
	\end{array}
    \right], \qquad 
    \sigma_2 = \frac{1}{2} \left[
	\begin{array}{cc}
	    0 & 1 \\ -1 & 0
	\end{array}
    \right], \qquad
    \sigma_3 = \frac{1}{2} \left[
	\begin{array}{cc}
	    0 & i \\ i & 0
	\end{array}
    \right],
\end{align*}
with structure equations $[\sigma_1,\sigma_2] = \sigma_3$, $[\sigma_2,\sigma_3] = \sigma_1$ and $[\sigma_3,\sigma_1] = \sigma_2$. The approach presented easily adapts to any $\so(n)$-, $\mathfrak{su}(n)$- or $\u(n)$-valued field (including the $\so(3)$-valued case of polarimetric neutron tomography, a close cousin of the present case), with some minor Lie group specific modifications to be made for an accurate computation of forward data.  

\subsection{Numerical domain and forward operator} \label{sec:numerics_forward}

The computational domain is an unstructured triangular mesh discretising the unit disk $M = \{x^2 + y^2 \le 1\}$ made of $N_v$ vertices, and functions on it are piecewise linear, uniquely determined by their values at the vertices. In particular $\Phi$ is regarded as an element of $\mathbb{R}^{3N_v}$.

\begin{figure}[htpb]
    \centering
    \includegraphics[trim=30 0 45 0, clip, height=0.15\textheight]{./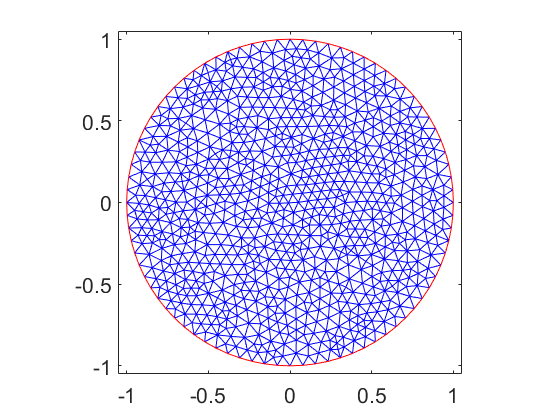}
    \includegraphics[trim=5 0 10 0, clip, height=0.15\textheight]{./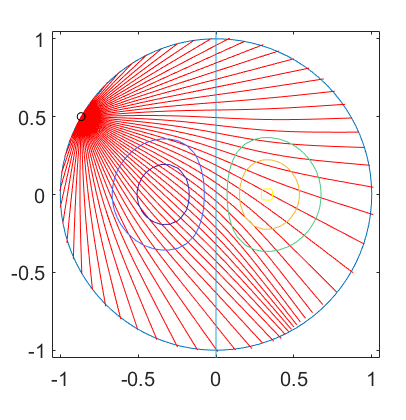}
    \includegraphics[trim=0 0 10 0, clip, height=0.15\textheight]{./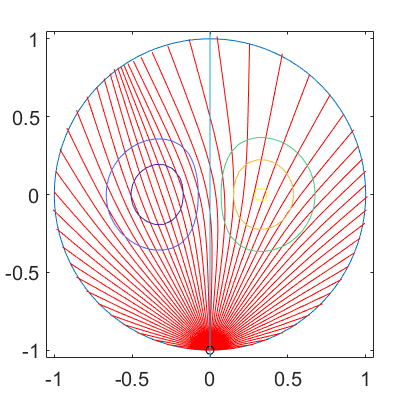}
    \includegraphics[trim=20 0 30 0, clip, height=0.15\textheight]{./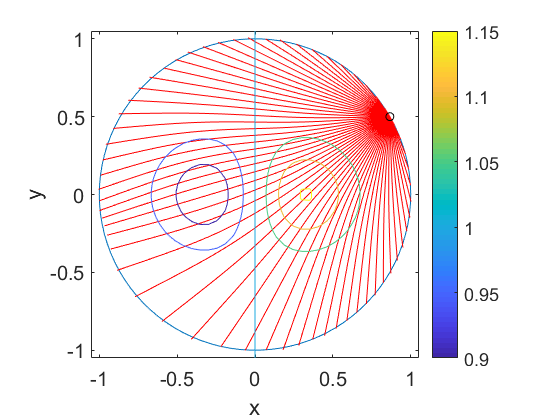}
    \caption{Left to right: An example of mesh with $N_v = 886$ vertices. Some geodesics for the metric we use in the examples that follow. A contour plot of the 'sound speed' $c = e^{-\bar \lambda}$ is superimposed.}
    \label{fig:mesh}
\end{figure}

The metric is isotropic, written as $g = e^{2\bar \lambda(x,y)} \id$, with scalar function $\bar \lambda$ given by
\begin{align*}
    \bar \lambda(x,y) = 0.3 (e^{-((x+0.3)^2 + y^2)/2\tau^2} - e^{-((x-0.3)^2 + y^2)/2\tau^2}), \qquad \tau = 0.25.
\end{align*}
Such an example can be seen to be non-trapping, have no conjugate points and a strictly convex boundary, i.e. $(M,g)$ is {\em simple}. The case of Euclidean geometry would correspond to $\bar \lambda\equiv 0$.
Geodesic (data) space, modelled as $\partial_+ SM$ is parameterised in fan-beam coordinates $(\beta,\alpha)\in (0,2\pi)\times (-\pi/2,\pi/2)$ (with uniform probability measure $d\lambda=d\beta\ d\alpha/(2\pi^2)$). 

Below we will draw $N$ geodesics uniformly at random, characterised by $N$ initial conditions $(\alpha_i,\beta_i)\in \partial_+ SM$, $1\le i\le N$, and our statistical algorithm will require numerical evaluation of the forward data $C_\Phi(\alpha_i,\beta_i)$ which we now describe: Out of each data point $(\alpha_i,\beta_i)$, we first compute a geodesic using a forward scheme with stepsize $h$ to solve a discretisation of the system 
\begin{align*}
    \dot x(t) = e^{-\bar \lambda} \cos \theta, \qquad \dot y(t) = e^{-\bar \lambda} \sin \theta, \qquad \dot \theta(t) = e^{-\bar \lambda} (-\sin\theta \partial_x \bar \lambda + \cos\theta \partial_y\bar \lambda),
\end{align*}
with initial condition $x(0) = \cos \beta_i$, $y(0) = \sin \beta_i$ and $\theta(0) = \beta_i+\pi+\alpha_i$, until the geodesic exits the domain. This produces a discretised geodesic
\begin{align*}
    \gamma_i = \{(x_i(t_j), y_i(t_j)), \quad t_j = jh, \quad 0\le j\le J_i\}.
\end{align*}

Once such a geodesic is computed, we must then discretise the matrix ODE
\begin{align*}
    \dot U (\gamma_i(t),\dot\gamma_i(t)) + \Phi (\gamma_i(t)) U(\gamma_i(t),\dot\gamma_i(t)) = 0, \qquad U(\gamma_i(0)) = \id.
\end{align*}
(The problem here is forward in time unlike that given in the introduction, though since $\Phi$ is $\u(n)$-valued, this amounts to computing the conjugate transpose of $C_\Phi$, which leads to the same problem.)

To discretise the above ODE, we denote $U^{(i,j)} := U(\gamma_i(t_j), \dot \gamma_i(t_j))$ and implement the scheme
\begin{align}
    U^{(i,j)} = \exp(-h \Phi^{(i,j-1)}) \cdot U^{(i,j-1)},\qquad 1\le j\le J_i,
    \label{eq:Uij}
\end{align}
where we have defined $\Phi^{(i,j-1)} = \Phi (x_i(t_{j-1}), y_i(t_{j-1}))$. In fact the code implements a predictor-corrector variant of this scheme for improved accuracy on the computation of the exponentials. 

The use of matrix exponentials in \eqref{eq:Uij} (compared to standard forward-marching schemes) ensures that the matrix solution $U$ numerically remains in $SU(2)$, and the computation of these exponentials can be done via an explicit formula, namely: for $A = a\ \sigma_1 + b\ \sigma_2 + c\ \sigma_3$ and denoting $|a| := \sqrt{a^2 + b^2 + c^2}$, we have for $l \in \mathbb R $
\begin{align*}
    \exp(l A) = \cos\left(\frac{l|a|}{2}\right) \id + \text{sinc } \left(\frac{l|a|}{2}\right) l A, \qquad (\text{sinc }x := (\sin x)/x)
\end{align*}
(Note that the formula above would need to be adapted if a Lie algebra $\mathfrak g$ different from $\mathfrak {su}(2)$ is of interest.) The evaluation of $\Phi^{(i,j-1)}$ is done by barycentric combination of the values of $\Phi$ at the three vertices of the triangle containing $(x_i(t_{j-1}), y_i(t_{j-1}))$. 

After implementing \eqref{eq:Uij}, the scattering data $C_\Phi (\gamma_i)$ is nothing but $U^{(i,J_i)}$ (in fact, the other values $U^{(i,j)}$ for $j<J_i$ are not kept in memory after computation). The magnetic field $\Phi$ we will use in the experiments below as well as its noiseless scattering data $C_\Phi$ are visualised Fig. \ref{fig:PhiCPhi}.

\begin{figure}[htpb]
    \centering    
    \includegraphics[height=0.15\textheight]{./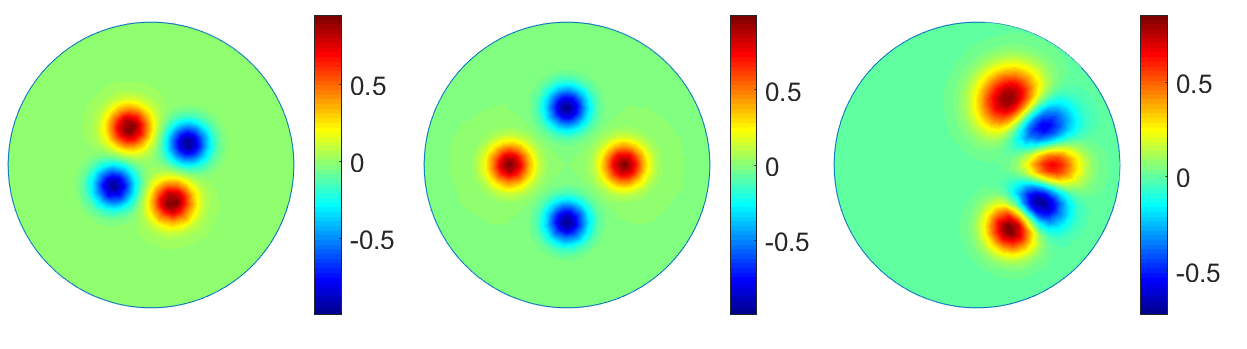} \\
    \includegraphics[width=\textwidth]{./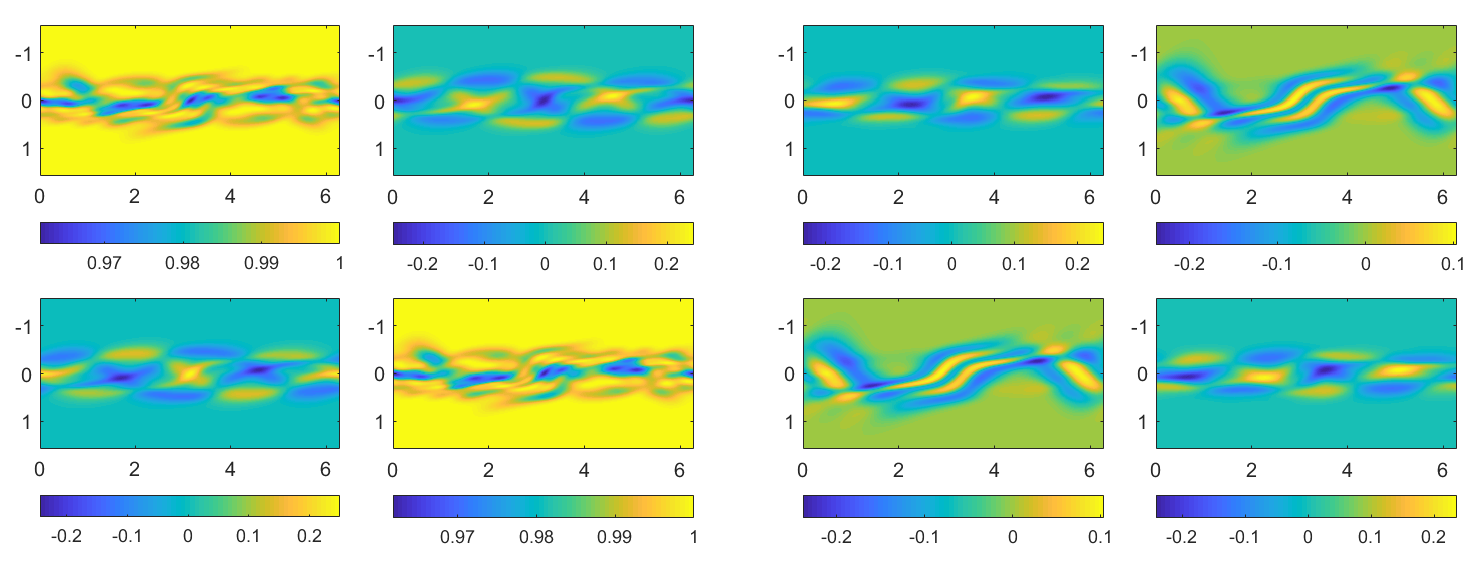}
    \caption{Top: the three components $(B_1, B_2, B_3)$ of the magnetic field realised as $\Phi_0 = B_1\ \sigma_1 + B_2\ \sigma_2 + B_3\ \sigma_3$. Bottom: real (left $2\times 2$ block) and imaginary (right $2\times 2$ block) parts of the scattering data $C_{\Phi_0}:\partial_+ SM\to SU(2)$ for the magnetic field $\Phi_0$ visualised on top.}
    \label{fig:PhiCPhi}
\end{figure}

As we will use Monte-Carlo Markov Chains (MCMC) in the following section, let us mention that once the mesh is fixed, some computations are done prior to the MCMC, namely, all geodesics as well as the triangle indices and barycentric weights along them.

\subsection{Statistical estimation through MCMC} \label{sec:MCMC}

Given data as in (\ref{model}), a common approach to inverse problems would be to compute a Tikhonov regulariser which minimises a penalised least squares fit functional (with, e.g., Sobolev-norm penalty)
\begin{equation}\label{tikh}
    Q_N(\Phi) = \frac{1}{2\sigma^2} \sum_{i=1}^N \frob{Y_i - C_{\Phi}(X_i,V_i)}^2 + \frac{1}{2}\|\Phi\|_{H^\alpha}^2
\end{equation}
over the space of all matrix fields $\Phi: M \to \mathfrak g$ where $\mathfrak g$ is the Lie algebra describing the constraint on the co-domain of $\Phi$. The map $Q_N$ is not convex, and efficient computation of the global minimiser may be challenging. One approach would be to use a gradient based iterative scheme \cite{KNS08} but the algorithmic stability of these (or other variational) methods is unclear in the setting considered here.   

The optimiser of the functional (\ref{tikh}) can be shown to correspond to a posterior mode, or `maximum a posteriori estimate (MAP)', of a Gaussian process prior $\Pi$ on $C(M, \mathfrak g)$ with RKHS equal to $H^\alpha$ (see \cite{DLSV13} for a general result of this kind). Instead of computing that maximiser, one may compute other posterior characteristics such as the posterior mean (average) $E^\Pi[\Phi|D_N]=E^\Pi[\Phi|(Y_{i}, (X_i, V_i))_{i=1}^N]$, which in our non-linear setting is different from the MAP estimate. 

 For Gaussian priors, MCMC algorithms such as the preconditioned Crank-Nicolson (pCN) method (see \cite{CRSW13}) are available to sample from the posterior distribution. To introduce the algorithm, note that as in \eqref{likpo}, the log-likelihood function given the data $(Y_i,(X_i,V_i))_{i=1}^N$ equals, up to additive constants,
\begin{align*}
    \ell(\Phi) := - \frac{1}{2\sigma^2}\sum_{i=1}^N \frob{Y_i - C_{\Phi}(X_i,V_i)}^2.
\end{align*}
One then approximates the posterior mean $E^\Pi[\Phi|(Y_i,(X_i,V_i))_{i=1}^N]$ by a Monte Carlo average  $\widehat{\Phi} = \frac{1}{N_{s}} \sum_{n=0}^{N_s}\Phi_n$ of a Markov chain $(\Phi_n)$ of length $N_s$ as follows: 

\smallskip

Let $\Pi$ be a Gaussian prior for $\Phi$; initialise $\Phi_n = 0$ for $n=0$, then repeat:  
\begin{enumerate}
    \item Draw $\Psi\sim \Pi$ and for $\delta>0$ define the proposal $p_{\Phi_n} := \sqrt{1-2\delta}\ \Phi_n + \sqrt{2\delta}\ \Psi$.   
    
    \smallskip
      
    \item Set 
	\begin{align*}
	    \Phi_{n+1} = \left\{
	    \begin{array}{ll}
		p_{\Phi_n}, & \text{with probability } 1\wedge \exp (\ell(p_{\Phi_n})-\ell(\Phi_n)), \\
		\Phi_n, & \text{otherwise}.
	    \end{array}
	    \right.
	\end{align*}
\end{enumerate}

The algorithm is terminated at $n=N_s$ and requires evaluation of $\ell (\Phi_n)$ and thus of the scattering data $C_{\Phi_n}(X_i, V_i)$ for every $\Phi_n$ and $(X_i, V_i)$. For $\mathfrak g=\mathfrak{su}(2)$ relevant in the simulations that follow, this can be done as described in Section \ref{sec:numerics_forward}.

\smallskip

The invariant measure of the Markov chain $\{\Phi_n\}$ equals the posterior distribution $\Pi(\cdot|D_N)$, and under certain conditions that are compatible with our setting, \cite{HSV14} derived dimension-free spectral gaps which imply that the distribution of $\Phi_n$ mixes rapidly towards $\Pi(\cdot|D_N)$. The approximation of $E^\Pi[\Phi|D_N]$ by $\widehat{\Phi} = \frac{1}{N_{s}} \sum_{n=0}^{N_s}\Phi_n$ can thus be expected to compare to the one of the standard central limit theorem, with corresponding non-asymptotic error guarantees, see Section 4 in \cite{HSV14}.

\smallskip

To perform numerical simulations, we discretise $\Phi= \sum_{i=1}^3 B_i \sigma_i: M \to \mathfrak {su}(2)$ as in Section \ref{sec:numerics_forward} and for each $B_i$ choose an independent Mat\'ern prior (cf.~Remark \ref{pcoex}) with parameters $(\nu,\ell)$, which on functions on the mesh (i.e., vectors in $\mathbb{R}^{N_v}$) uses the covariance matrix $C_{i,j} = k_{\nu,\ell}(|x_i-x_j|)$ for $1\le i,j\le N_v$, with positive definite kernel
\begin{align*}
    k_{\nu,\ell}(r) := \frac{2^{1-\nu}}{\Gamma(\nu)} \left( \frac{\sqrt{2\nu}r}{\ell} \right)^\nu K_\nu \left( \frac{\sqrt{2\nu}r}{\ell} \right), 
\end{align*}
with $K_\nu$ the modified Bessel function of the second kind. The constant $\nu$ controls the Sobolev regularity while $\ell$ controls the characteristic lengthscale of the samples.

\begin{figure}[htpb]
    \centering
    \boxed{\includegraphics[trim=60 20 40 10, clip, width=0.15\textwidth]{./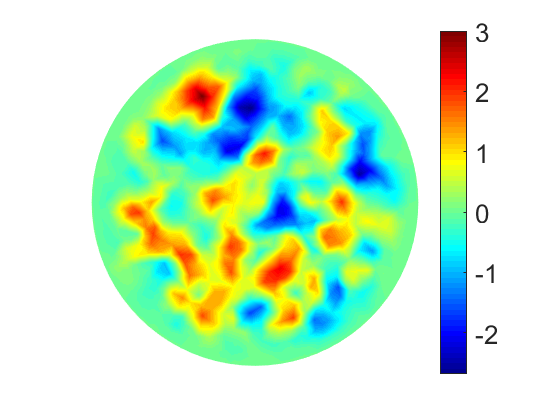}
    \includegraphics[trim=60 20 40 10, clip, width=0.15\textwidth]{./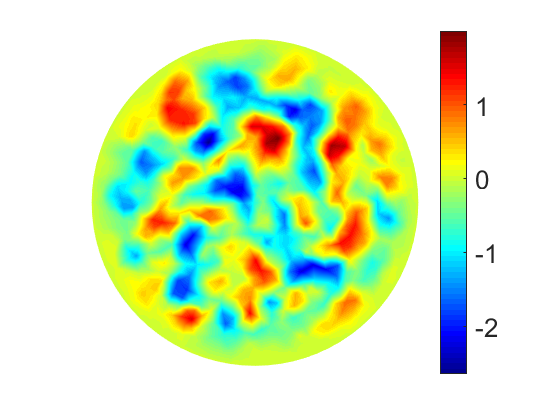}}
    \boxed{\includegraphics[trim=60 20 40 10, clip, width=0.15\textwidth]{./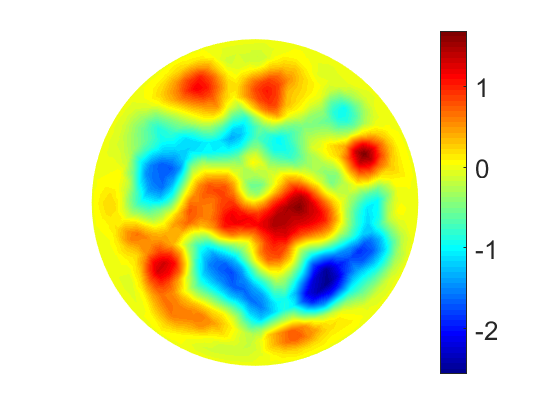}
    \includegraphics[trim=60 20 40 10, clip, width=0.15\textwidth]{./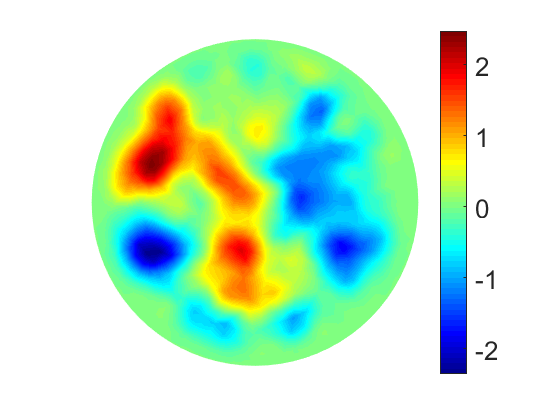}}
    \boxed{\includegraphics[trim=60 20 40 10, clip, width=0.15\textwidth]{./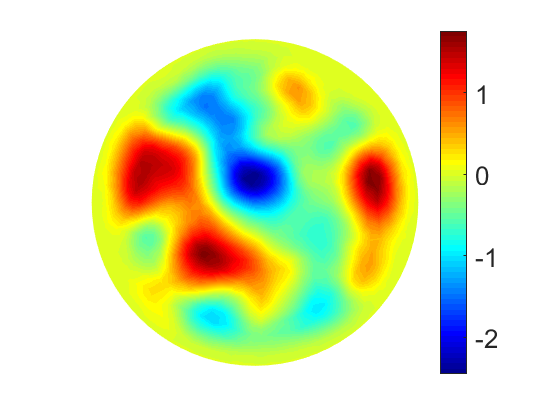}
    \includegraphics[trim=60 20 40 10, clip, width=0.15\textwidth]{./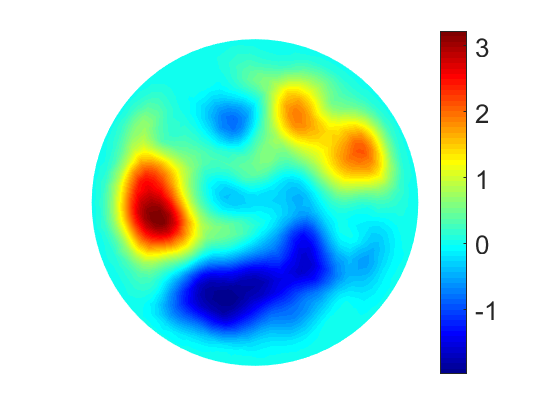}}
    \caption{Left to right: two Mat\'ern prior samples with $\ell = 0.1$, $0.2$ and $0.3$, respectively. In all samples, $\nu=3$.}
    \label{fig:priors}
\end{figure}

We draw $N$ geodesics at random according to the uniform law for $(\alpha,\beta)$ (some samples on $\partial_+ SM$ of size $N= 200, 400, 800$ are visualised Fig.~\ref{fig:densities}), and then generate synthetic data $(Y_i, (X_i,V_i))_{i=1}^N$ as explained in Section \ref{sec:numerics_forward} for the magnetic field $\Phi_0$ displayed in Fig.~\ref{fig:PhiCPhi}, adding Gaussian noise $N(0,\sigma^2)$ to each matrix entry of $C_{\Phi_0}$. 

\begin{figure}[htpb]
    \centering
    \includegraphics[trim=20 20 20 10, clip, width=0.32\textwidth]{./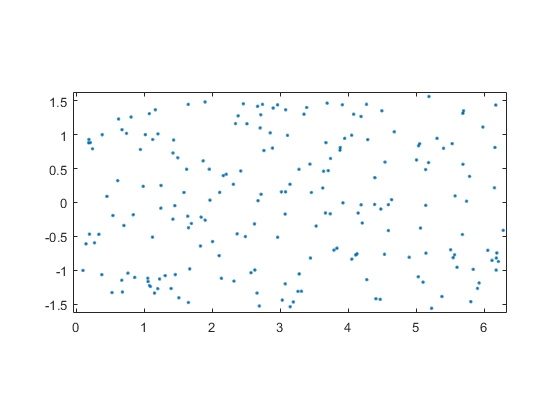} 
    \includegraphics[trim=20 20 20 10, clip, width=0.32\textwidth]{./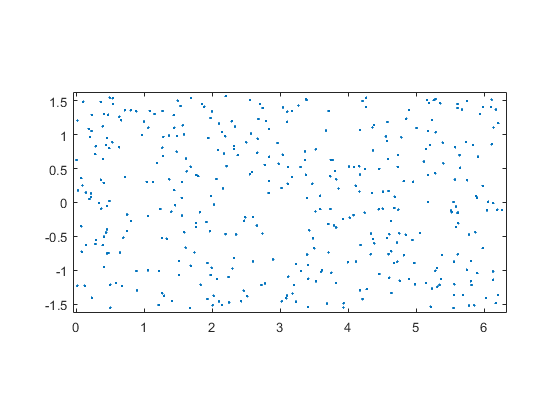} 
    \includegraphics[trim=20 20 20 10, clip, width=0.32\textwidth]{./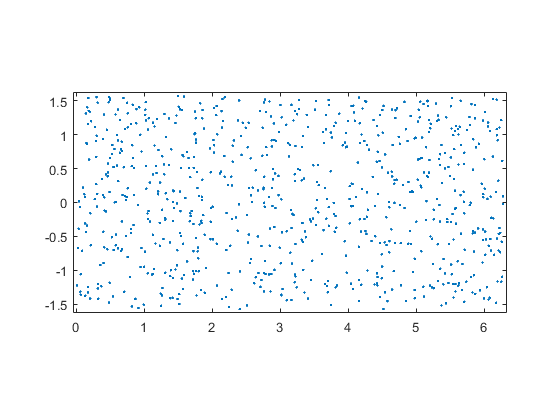}     
    \caption{Left to Right: Examples of sample draws on $\partial_+ SM$ for $N = 200, 400, 800$.}
    \label{fig:densities}
\end{figure}


\smallskip

We then implement the pCN algorithm to approximately compute the posterior mean $\bar \Phi_N =E^\Pi[\Phi|(Y_{i}, (X_i, V_i))_{i=1}^N]$ from Theorem \ref{main}. The stepsize $\delta$ is adjusted so that after `burn-in', the acceptance rate of proposals stabilises around $25\%$. Once the chain is computed we visualise $\widehat{\Phi} = \frac{1}{N_{s}} \sum_{n=0}^{N_s}\Phi_n$ -- examples of outcomes corresponding to increasing data set are given in Fig.~\ref{fig:estimators}, illustrating the improvement in `reconstructions' as the number $N$ of measurement points increases.

\begin{figure}[htpb]
    \centering
    \includegraphics[width = \textwidth]{./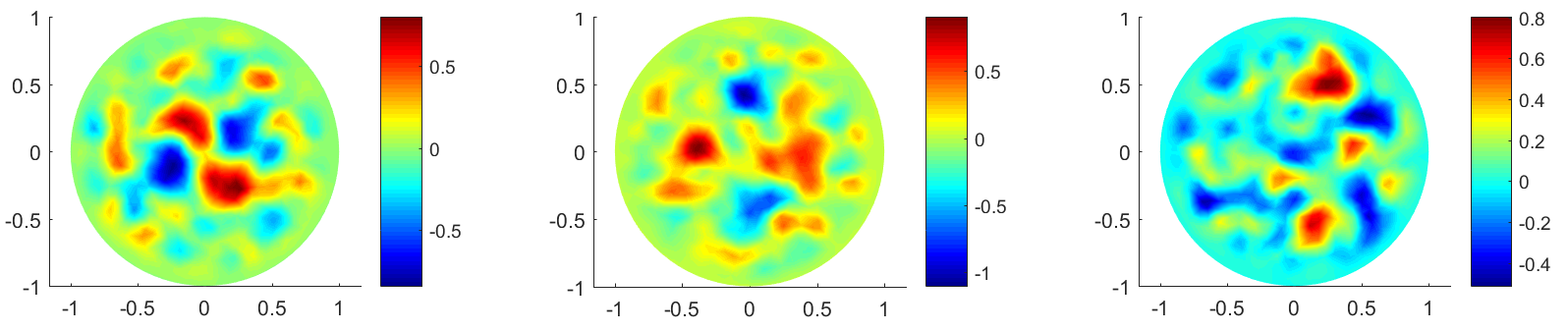} \\
    \includegraphics[width = \textwidth]{./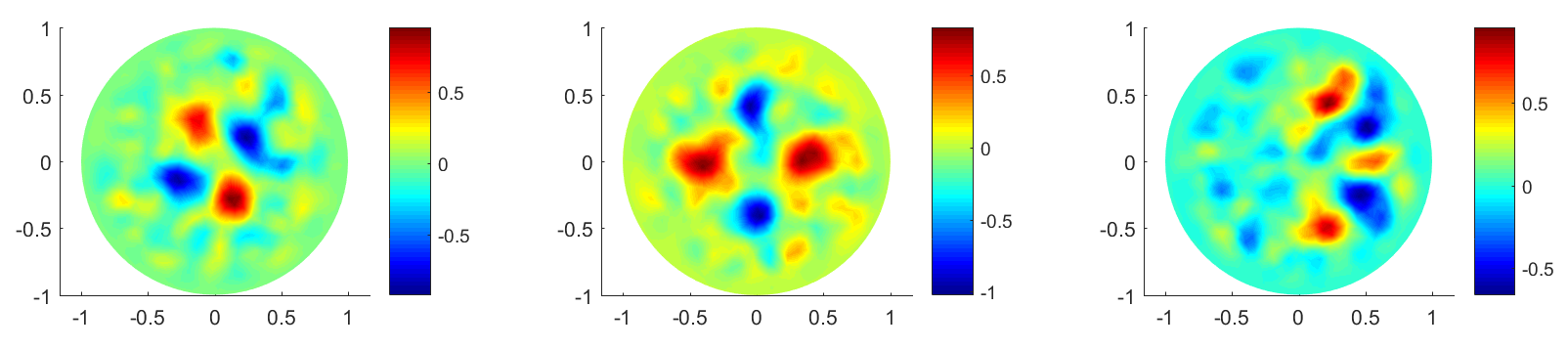} \\
    \includegraphics[width = \textwidth]{./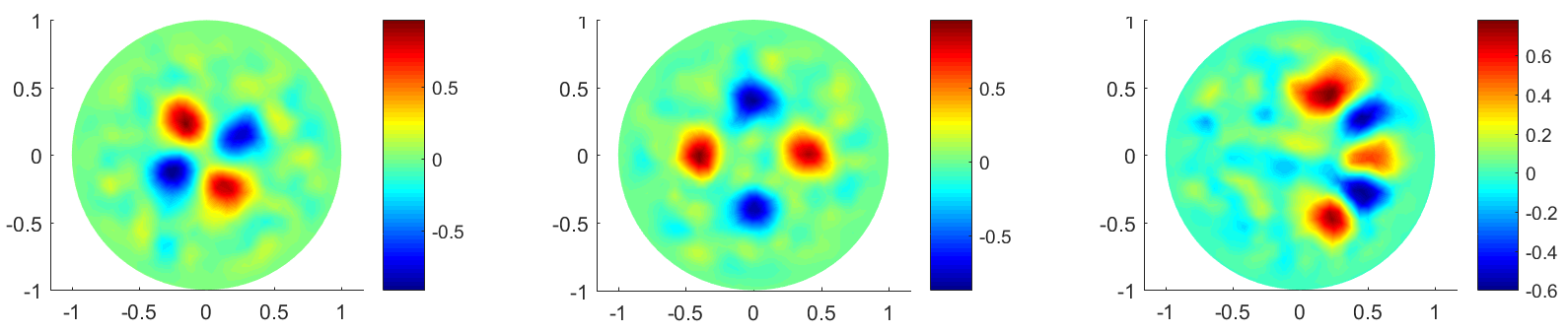}
    \caption{Top to bottom:  The posterior mean field $\widehat{\Phi}$ for sample sizes $N = 200, 400, 800$, to be compared with the true field $\Phi_0$ from Fig.~\ref{fig:PhiCPhi}. The number of Monte-Carlo iterations equals $N_s=100000$. Other parameters: $\sigma = 0.05$, $\nu = 3$, $\ell = 0.2$, $\delta = 0.000025$, $N_v = 886$.}
    \label{fig:estimators}
\end{figure}

\section{Proofs}

\subsection{Geometric preliminaries} \label{eq:geoprelims}

Let $(M,g)$ be a compact oriented two dimensional Riemannian manifold with smooth boundary $\partial M$. As before $SM$ will denote the unit circle bundle which is a compact 3-manifold with boundary given by 
\[ \partial(SM)=\{(x,v)\in SM:\;x\in \partial M\}. \]  
We let $X$ be the geodesic vector field, i.e. the infinitesimal generator of the geodesic flow of $M$. Since $M$ is assumed oriented there is a circle action on the fibers of $SM$ with infinitesimal generator $V$ called the {\it vertical vector field}. It is possible to complete the pair $X,V$ to a global frame
of $T(SM)$ by considering the vector field $X_{\perp}:=[X,V]$. There are two additional structure equations given by $X=[V,X_{\perp}]$ and $[X,X_{\perp}]=-\kappa V$ where $\kappa$ is the Gaussian curvature of the surface. Using this frame we can define a Riemannian metric on $SM$ by declaring $\{X,X_{\perp},V\}$ to be an orthonormal basis and the volume form of this metric will be denoted by $d\Sigma^3$. The fact that $\{ X, X_{\perp}, V \}$ are orthonormal together with the commutator formulas implies that the Lie derivative of $d\Sigma^3$ along the three vector fields vanishes.

Given functions $u,v:SM\to \C^n$ we consider the inner product
\begin{align}
    (u,v) =\int_{SM}\langle u,v\rangle_{\mathbb \C^n}\,d\Sigma^3.    
    \label{eq:L2inner}
\end{align}
Upon defining $\mu(x,v):= -g_x(v,\nu_x)$ for $(x,v)\in \partial SM$, the following formula (known as Santal\'o's formula) holds for any $f\in L^1(SM)$: 
\begin{align}
    \int_{SM} f(x,v)\ d\Sigma^3 = \int_{\partial_+ SM} \int_0^{\tau(x,v)} f(\varphi_t(x,v))\ dt\ \mu(x,v) \ d\Sigma^2,
    \label{eq:santalo}
\end{align}
where $\varphi_{t}$ is the geodesic flow.

We now discuss the manifold $\partial_+ SM$ and its geometry. One may define a natural frame on $\partial_+ SM$, given by 
\begin{align}
    V := V|_{\partial_+ SM}, \qquad T := (\mu_\perp X + \mu X_\perp)|_{\partial_+ SM}, \quad \text{where }\quad \mu_\perp := V\mu
    \label{eq:TV}
\end{align}
($T$ represents horizontal differentiation along the tangent direction). It is easily seen that $[V,T] = 0$ and that these two vector fields are orthonormal for the metric on $\partial SM$ induced by the metric defined on $SM$. In particular $(T,V)$ is an orthonormal frame for $\partial_+ SM$ and we may define $H^s(\partial_+ SM; \cdot)$ with respect to that frame. We now prove a useful lemma that will simplify later calculations.

\begin{Lemma}\label{lem:PVPT} Let $(M,g)$ be a non-trapping surface with strictly convex boundary. Then the vector field $X$ can be completed into a global, pairwise commuting frame $\{X,P_T,P_V\}$ of $T(SM)$. This frame is smooth on $SM\backslash S\partial M$, continuous on $SM$ and satisfies $P_T|_{\partial_+ SM} = T$ and $P_V|_{\partial_+ SM} = V$. 
\end{Lemma}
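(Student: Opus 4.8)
The plan is to obtain $P_V$ and $P_T$ by transporting the boundary frame $\{V,T\}$ of $\partial_+SM$ along the geodesic flow $\varphi_t$. Commutation with $X$ is then automatic (flowing data out of a cross-section transverse to the flow produces a field commuting with the generator), $[P_V,P_T]=0$ reduces to the identity $[V,T]=0$ already recorded on $\partial_+SM$, and the frame property follows because $d\varphi_t$ is a linear isomorphism fixing $X$.

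To set things up, first isometrically embed $(M,g)$ into a boundaryless surface $(N,g)$ with $M\subset N$ compact, so that the geodesic flow $\varphi_t$ on $SN$ is smooth and (since $\dim SM=\dim SN$) $T_pSM=T_pSN$ for every $p\in SM$. Then recall the standard consequences of non-trapping and strict convexity of $\partial M$: every geodesic spends a finite time in $M$; a geodesic tangent to $\partial M$ at a boundary point leaves $M$ at once and never returns, so no point of $SM^{int}$ lies on it; and the \emph{backward exit time} $\ell(x,v):=\tau(x,-v)$, i.e.\ the least $s\ge 0$ with $\varphi_{-s}(x,v)\in\partial SM$, is continuous on $SM$, vanishes on $\partial_+SM$, and is smooth on $SM\setminus S\partial M$. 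Hence the \emph{entry-point map} $w(x,v):=\varphi_{-\ell(x,v)}(x,v)$, which lies in $\partial_+SM$ (the velocity there points into $M$), is continuous on $SM$, equals the identity on $\partial_+SM$, and is smooth on $SM\setminus S\partial M$; and the flow-out map $\Psi(w,t):=\varphi_t(w)$ is a diffeomorphism from $\{(w,t):w\in\partial_+SM\setminus S\partial M,\ 0<t<\tau(w)\}$ onto $SM^{int}$, with inverse $(x,v)\mapsto(w(x,v),\ell(x,v))$.

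Now define
\[
P_V(x,v):=\bigl(d\varphi_{\ell(x,v)}\bigr)_{w(x,v)}V\bigl(w(x,v)\bigr),\qquad
P_T(x,v):=\bigl(d\varphi_{\ell(x,v)}\bigr)_{w(x,v)}T\bigl(w(x,v)\bigr),
\]
with $V,T$ on $\partial_+SM$ as in \eqref{eq:TV}. Since $w,\ell$ are smooth off $S\partial M$ and $\varphi$ is smooth on $SN$, both fields are smooth on $SM\setminus S\partial M$; as one approaches $S\partial M$ one has $\ell\to 0$ and $w\to\text{id}$, so $d\varphi_{\ell(x,v)}\to\text{id}$ and $P_V\to V$, $P_T\to T$ (both continuous on $\partial SM$, since $\mu,\mu_\perp$ and $X,X_\perp$ are), whence $P_V,P_T$ extend continuously to all of $SM$. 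On $\partial_+SM$ we have $\ell=0$ and $w=\text{id}$, so $P_V|_{\partial_+SM}=V$ and $P_T|_{\partial_+SM}=T$. For the frame property, fix $(x,v)\in SM\setminus S\partial M$; then $w(x,v)$ is non-glancing, and at a non-glancing point of $\partial_+SM$ the triple $\{X,T,V\}$ is a basis of $T_wSM$, since $T,V$ span $T_w(\partial SM)$ while $X$ is transverse to $\partial SM$ (because $g_x(v,\nu_x)\neq 0$); applying the isomorphism $(d\varphi_{\ell(x,v)})_{w(x,v)}$, which fixes $X$, shows that $\{X,P_T,P_V\}$ is a basis of $T_{(x,v)}SM$. (On the codimension-two set $S\partial M$ itself $T$, hence $P_T$, is proportional to $X$, so $\{X,P_T,P_V\}$ is genuinely a frame precisely on $SM\setminus S\partial M$; as $S\partial M$ is a null set this is all that is needed in the sequel.)

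It remains to verify the bracket relations, which I would do in the flow-out chart $\Psi$ of the second step: there $X=\Psi_*\partial_t$, $P_V=\Psi_*\widehat V$ and $P_T=\Psi_*\widehat T$, where $\widehat V,\widehat T$ denote the fields $V,T$ on $\partial_+SM\setminus S\partial M$ extended with no $\partial_t$-component and constant in $t$. Hence $[X,P_V]=\Psi_*[\partial_t,\widehat V]=0$ and $[X,P_T]=\Psi_*[\partial_t,\widehat T]=0$, while $[P_V,P_T]=\Psi_*[\widehat V,\widehat T]=\Psi_*\widehat{[V,T]}=0$ since $[V,T]=0$ on $\partial_+SM$; together with $[X,X]=0$ this yields the asserted pairwise commuting frame on $SM\setminus S\partial M$. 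The only genuinely non-formal ingredient is the regularity statement about $\ell$ (equivalently about $\tau$) — continuity on $SM$ and smoothness off the glancing set $S\partial M$ — which is exactly where strict convexity of $\partial M$ enters; everything else is the routine calculus of transporting a frame along a flow.
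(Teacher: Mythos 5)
Your proof is essentially the paper's: push $T,V$ forward from $\partial_+SM$ along the geodesic flow, get pairwise commutativity from the fact that the flow-out map is a diffeomorphism onto $SM^{int}$, and extend to $\partial SM$ by continuity (the paper additionally records explicit Jacobi-field formulas for $P_T,P_V$, but the abstract argument is the same). Your parenthetical remark that the triple degenerates on $S\partial M$ --- where $T$ is a multiple of $X$, hence so is $P_T$ --- is correct and a worthwhile observation, since taken literally the lemma's ``global frame'' holds only on $SM\setminus S\partial M$; this is a measure-zero set and is harmless in the $L^2$/$C^k$ estimates where the lemma is invoked.
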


\begin{proof}[Proof of Lemma \ref{lem:PVPT}]
    For $(x,v)\in \partial_+ S M\backslash S\partial M$ and $t\in (0,\tau(x,v))$, we define two vector fields on $S M^{int}$
    \begin{align*}
	(P_T)_{\varphi_t(x,v)} := d\varphi_t|_{(x,v)} T_{(x,v)}, \qquad (P_V)_{\varphi_t(x,v)} := d\varphi_t|_{(x,v)} V_{(x,v)}.
    \end{align*}
    Since the map $(x,v,t)\mapsto \varphi_t(x,v)$ is smooth and injective for $(x,v)\in \partial_+ S M\backslash S\partial M$ and $t\in (0,\tau(x,v))$, this defines global, smooth sections of $T(SM^{int})$, and so that $X,P_T,P_V$ pairwise commute. Via direct computation of the differential of the flow (see e.g. \cite[Sec. 4.2]{MP}), one may obtain the following expressions on $SM^{int}$
    \begin{align*}
	P_T = (\mu_\perp)_\psi X + \mu_\psi \left( \ba X_\perp - (X\ba) V \right), \qquad P_V =  - \bb X_\perp + (X\bb)  V,
    \end{align*}
    where $\ba, \bb\colon SM \to \mathbb{R}$ satisfy
    \begin{align*}
	X^2 \ba + \kappa \ba = X^2 \bb + \kappa \bb = 0\qquad (SM), \qquad  \left[ \begin{smallmatrix} \ba & \bb \\ X\ba & X\bb \end{smallmatrix}\right]|_{\partial_+ SM} = \id,
    \end{align*}
    and where for $h:\partial_+ SM\to \Cm$, one defines $h_\psi:SM\to \Cm$ though the relation
    \begin{align*}
	h_\psi (\varphi_t(x,v)) = h(x,v), \qquad (x,v)\in \partial_+ SM,\quad t\in [0,\tau(x,v)]. 
    \end{align*}
    One further notices that the definition of $P_V, P_T$ extends by continuity to $\partial(SM)$, with the appropriate restrictions claimed in the statement of the lemma. 
\end{proof}

\subsection{Forward estimates - proof of Theorem \ref{thm:forward}}

In this section, we derive various continuity estimates for the forward map $\Phi\mapsto C_\Phi$. Recall that if the boundary $\partial M$ is strictly convex, by \cite[Lemma 4.1.2 p113]{Sharafutdinov} there is a constant $C_0(M,g)>0$ such that 
\begin{align}
    \tau(x,v) \le C_0\ \mu(x,v), \qquad \forall (x,v)\in \partial_+ SM.
    \label{eq:C0const}
\end{align}
We start with the following basic estimates. 

\begin{Lemma}[Work-horse lemma]\label{lem:workhorse0} Let $(M,g)$ be a non-trapping surface with strictly convex boundary and $\Phi\in C(M,\mathfrak{u}(n))$. Suppose $F\in C(SM,\C^{n\times n})$ and consider the unique continuous solution $G:SM\to\C^{n\times n}$ to $XG + \Phi G = F$ on $SM$ with $G|_{\partial_{-}SM}=0$. Then there exists a constant $C_1(M,g)$ such that 
    \begin{align}
	\norm{G|_{\partial_+ SM}}_{L^{\infty}(\partial_+ SM,\Cm^{n\times n})}&\le \norm{G}_{L^{\infty}(SM,\Cm^{n\times n}) }\le C_1 \norm{F}_{L^{\infty}(SM,\Cm^{n\times n})}, 	\label{eq:basicLinf} \\
	\norm{G}_{L^{2}(SM,\Cm^{n\times n}) }&\le C_1 \norm{F}_{L^{2}(SM,\Cm^{n\times n})}, \label{eq:basicL2_1} \\
	\norm{G|_{\partial_+ SM}}_{L^{2}(\partial_+ SM,\Cm^{n\times n}) } &\le C_1 \norm{F}_{L^{2}(SM,\Cm^{n\times n})}, \label{eq:basicL2_2}
    \end{align}
    The constant $C_1$ can be chosen as $C_1 = \max (\tau_\infty, \sqrt{C_0})$, with $\tau_\infty$ the diameter of $M$ and $C_0$ the constant given in \eqref{eq:C0const}. 
\end{Lemma}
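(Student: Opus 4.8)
\textbf{Proof plan for Lemma \ref{lem:workhorse0}.}
The strategy is to solve the transport equation $XG + \Phi G = F$ explicitly along geodesics and then estimate the three norms separately. First I would fix $(x,v)\in\partial_+SM$ and consider the integral curve $t\mapsto\varphi_t(x,v)$ for $t\in[0,\tau(x,v)]$, noting that the geodesic exits $\partial_-SM$ at time $\tau(x,v)$ (finite, since $M$ is non-trapping). Along this curve the PDE becomes the matrix ODE $\dot G + \Phi G = F$; using the fundamental solution (propagator) $R(t)$ of $\dot R + \Phi R = 0$ with $R(\tau(x,v))=\id$, and exploiting that $\Phi$ is $\mathfrak u(n)$-valued so that $R(t)\in U(n)$ and hence $\frob{R(t)}$-bounded multiplication is an isometry, I would write $G(\varphi_t(x,v)) = -\int_t^{\tau(x,v)} R(t)R(s)^{-1}F(\varphi_s(x,v))\,ds$, which already gives the pointwise bound $\frob{G(\varphi_t(x,v))} \le \int_0^{\tau(x,v)} \frob{F(\varphi_s(x,v))}\,ds$.

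For \eqref{eq:basicLinf}, the pointwise bound immediately yields $\frob{G(\varphi_t(x,v))} \le \tau(x,v)\,\|F\|_{L^\infty(SM)} \le \tau_\infty\,\|F\|_{L^\infty(SM)}$, and taking the supremum over $(x,v)$ and $t$ (which parametrises all of $SM$ by Santal\'o's formula) gives the $L^\infty$ estimate with constant $\tau_\infty$; restricting to $t=0$ gives the boundary bound. For the $L^2$ estimates I would apply Santal\'o's formula \eqref{eq:santalo} to $\frob{G}^2$: this turns $\|G\|_{L^2(SM)}^2$ into $\int_{\partial_+SM}\int_0^{\tau(x,v)}\frob{G(\varphi_t(x,v))}^2\,dt\,\mu\,d\Sigma^2$. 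Inserting the pointwise representation and applying Cauchy--Schwarz in the inner integral (bounding $\big(\int_t^{\tau}\frob{F(\varphi_s)}\,ds\big)^2 \le \tau(x,v)\int_0^{\tau(x,v)}\frob{F(\varphi_s)}^2\,ds$), then integrating in $t$ to produce a factor $\tau(x,v)^2$, and finally invoking \eqref{eq:C0const} to replace one factor $\tau(x,v)$ by $C_0\mu(x,v)$ — wait, more carefully one factor $\tau(x,v)^2 \le C_0\mu(x,v)\cdot\tau(x,v)$, and then the remaining $\int_0^{\tau(x,v)}\frob{F(\varphi_s)}^2\,ds\cdot\mu$ integrated over $\partial_+SM$ is exactly $\|F\|_{L^2(SM)}^2$ by Santal\'o again, while $\tau(x,v)\le\tau_\infty$ — comparing constants gives $C_1 = \max(\tau_\infty,\sqrt{C_0})$. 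For \eqref{eq:basicL2_2} I would instead integrate $\frob{G|_{\partial_+SM}}^2$ directly over $\partial_+SM$, use the pointwise bound at $t=0$, Cauchy--Schwarz to get $\tau(x,v)\int_0^{\tau(x,v)}\frob{F(\varphi_s)}^2ds$, bound $\tau(x,v)\le C_0\mu(x,v)$, and recognise the result as $C_0\|F\|_{L^2(SM)}^2$.

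The main obstacle is purely bookkeeping: tracking the powers of $\tau(x,v)$ versus $\mu(x,v)$ so that the single uniform constant $C_1=\max(\tau_\infty,\sqrt{C_0})$ comes out, rather than a product or a worse power. The key facts that make everything work cleanly are (i) the $U(n)$-invariance of the Frobenius norm, which removes the propagators $R(t)R(s)^{-1}$ from all estimates without any exponential factor in $\|\Phi\|$, and (ii) the convexity inequality \eqref{eq:C0const}, which is precisely what converts the "bad" weight $\tau(x,v)^2$ arising from Cauchy--Schwarz plus the $t$-integration into the "good" weight $\mu(x,v)$ that Santal\'o's formula needs to reconstruct the $L^2(SM)$-norm of $F$. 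There is no analytic subtlety beyond existence and continuity of $G$, which follows from standard ODE theory along each geodesic together with continuity of the exit time $\tau$ on $\partial_+SM\setminus S\partial M$ and its behaviour near the glancing region.
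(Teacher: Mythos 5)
Your proposal is correct and follows essentially the same route as the paper: the Duhamel formula with a unitary propagator, $U(n)$-invariance of the Frobenius norm to discard that propagator, Cauchy--Schwarz, Santal\'o's formula, and \eqref{eq:C0const} used only for the boundary estimate \eqref{eq:basicL2_2}. The one small slip is your momentary attempt to use $\tau(x,v)^2\le C_0\mu(x,v)\tau(x,v)$ in proving \eqref{eq:basicL2_1}: there the weight $\mu$ required by Santal\'o is already supplied by the first application of Santal\'o's formula to $\|G\|_{L^2(SM)}^2$, so one simply bounds $\tau(x,v)^2\le\tau_\infty^2$, exactly as you in fact conclude.
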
 

\begin{proof} It is easy to check that
    \[G(x,v)=-U_{\Phi}(x,v)\int_{0}^{\tau(x,v)}(U^{-1}_{\Phi}F)(\varphi_{t}(x,v))\,dt, \qquad (x,v)\in SM, \]
    where $U_\Phi$ is the unique solution $U$ to $XU + \Phi U = 0$ on $SM$ with $U|_{\partial_+ SM} = \id$. Taking Frobenius norm, using $U(n)$-invariance and the fact that that $U_{\Phi}$ is unitary, we get
    \begin{align*}
	\frob{G}(x,v) = \frob{\int_{0}^{\tau(x,v)}(U^{-1}_{\Phi}F)(\varphi_{t}(x,v))\,dt}&\le \int_0^{\tau(x,v)} \frob{U^{-1}_{\Phi}F} (\varphi_t(x,v))\ dt \\
	&\le \int_0^{\tau(x,v)} \frob{F} (\varphi_t(x,v))\ dt.
    \end{align*}
    Upon bounding the right-hand side crudely by $\tau_\infty \|F\|_{L^\infty}$, this immediately implies \eqref{eq:basicLinf}. On to the $L^2$ estimates, applying Cauchy-Schwarz yields for all $(x,v)\in SM$
    \begin{align}
	\frob{G}(x,v)^2 \le \tau(x,v) \int_{0}^{\tau(x,v)} \frob{F}^2(\varphi_t(x,v))\ dt \le \tau_\infty \int_{\gamma_{x,v}} \frob{F}^2,
	\label{eq:tmpGF}
    \end{align}
    where $\gamma_{x,v}$ is the maximal geodesic passing through $(x,v)$. Now fix $(x,v)\in \partial_+ SM$ and integrate the inequality above along the geodesic flow $\varphi_t(x,v)$ to arrive at 
    \begin{align*}
	\int_{0}^{\tau(x,v)} \frob{G}(\varphi_t(x,v))^2\ dt \le \tau_\infty^2 \int_0^{\tau(x,v)} \frob{F}(\varphi_t(x,v))^2\ dt, \qquad (x,v) \in \partial_+ SM.
    \end{align*}
    Multiplying both sides by $\mu$, integrating w.r.t. $d\Sigma^2$ and using Santal\'o's formula yields \eqref{eq:basicL2_1}. 

    For the estimate on $L^2(\partial_+ SM)$, looking at \eqref{eq:tmpGF} for $(x,v)\in \partial_+ SM$ and using \eqref{eq:C0const}, we arrive at
    \begin{align*}
	\frob{G}(x,v)^2 \le C_0 \int_{0}^{\tau(x,v)} \frob{F}^2(\varphi_t(x,v))\ dt \mu(x,v), \qquad (x,v)\in \partial_+ SM. 
    \end{align*}
    Integrating w.r.t. $d\Sigma^2$ and using Santal\'o's formula \eqref{eq:santalo} on the right hand side immediately gives \eqref{eq:basicL2_2}. Lemma \ref{lem:workhorse0} is proved.
\end{proof}

We now prove the main result on forward estimates, Theorem \ref{thm:forward}. We shall follow the model proof of \cite[Theorem 4.2.1]{Sharafutdinov} which shows that the standard X-ray transform $I$ maps $H^s$ to $H^s$.
We do this in two stages: first we explain in Sec. \ref{proof:supportinside} the proof in the simpler case in which the matrix fields have support contained in the interior of $M$ and then we explain in Sec. \ref{proof:supporteverywhere} how to derive the general case.

\subsubsection{Proof of Theorem \ref{thm:forward} assuming $\Phi$ and $\Psi$ with support in the interior of $M$}\label{proof:supportinside} As a preliminary identity, given $\Phi$ and $\Psi$ two skew hermitian matrix fields, consider the two $U(n)$-valued solutions $U_{\Phi}, U_{\Psi}$ such that $XU_{\Phi} + \Phi U_{\Phi} = 0$ with boundary condition $U_\Phi|_{\partial_- SM} = \id$. It is immediate to find that the relation
    \begin{align*}
	X(U_\Phi-U_\Psi) + \Phi(U_\Phi-U_\Psi) = - (\Phi-\Psi)U_\Psi	
    \end{align*}
    holds pointwise on $SM$, and that $(U_\Phi-U_\Psi)|_{\partial_- SM} = 0$. Using that $(U_\Phi-U_\Psi)|_{\partial_+ SM} = C_\Phi-C_\Psi$ with estimate \eqref{eq:basicLinf} yields 
    \begin{align*}
	\norm{C_\Phi-C_\Psi}_{L^\infty} \le C_1 \norm{(\Phi-\Psi)U_\Psi}_{L^\infty} = C_1 \norm{\Phi-\Psi}_{L^\infty}. 
    \end{align*}
    Similarly, combining the observation with \eqref{eq:basicL2_2} yields \eqref{eq:cont0}, and we can also obtain, using \eqref{eq:basicL2_1}, 
    \begin{align}
	\norm{U_\Phi-U_\Psi}_{L^2} \le C_1 \norm{\Phi-\Psi}_{L^2}. 
	\label{eq:coco1}
    \end{align}

    To prove the $C^1$ continuity estimate, consider the function $W:= P_V(U_\Phi-U_\Psi)$, such that $W|_{\partial_+ SM} = V(C_\Phi-C_\Psi)$ and for brevity set $P=P_{V}$. The following identity is immediate: 
    \begin{align*}
	X W  + \Phi W = - (P\Phi (U_\Phi - U_\Psi) + (P(\Phi-\Psi)) U_\Psi + (\Phi-\Psi)PU_\Psi).
    \end{align*}
    In addition, since $\Phi$ and $\Psi$ are compactly supported in $M^{\text{int}}$, the functions $U_\Phi$, $U_\Psi$ equal the identity matrix in a neighbourhood of $\partial_- SM$ and in particular, $W|_{\partial_- SM} = 0$.

    Using estimates \eqref{eq:basicLinf}-\eqref{eq:basicL2_1}-\eqref{eq:basicL2_2} and $U(n)$-invariance of Frobenius norms gives: 
    \begin{align*}
	\norm{V(C_\Phi-C_\Psi)}_{L^2} &\le C_1 \norm{P\Phi (U_\Phi - U_\Psi) + (P(\Phi-\Psi)) U_\Psi + (\Phi-\Psi)PU_\Psi}_{L^2} \\
	&\le C_1 ( \norm{P\Phi}_\infty \norm{U_\Phi - U_\Psi}_{L^2} + \norm{P(\Phi-\Psi)}_{L^2} + \norm{\Phi-\Psi}_{L^2} \norm{PU_\Psi}_{L^\infty})
    \end{align*}
    We also have $X(PU_\Psi) +\Psi PU_\Psi = - (P\Psi) U_\Psi$ with $PU_{\Psi}|_{\partial_- SM} = 0$, so by \eqref{eq:basicLinf}, we get $\norm{PU_\Psi}_{L^\infty} \le C_1 \norm{P\Psi}_{L^\infty}$. Combining this fact with \eqref{eq:coco1}, we arrive at  
    \begin{align*}
	\norm{V(C_\Phi-C_\Psi)}_{L^2} \le C_1 (C_1 (\norm{P\Phi}_{L^\infty}+\norm{P\Psi}_{L^\infty}) \norm{\Phi-\Psi}_{L^2} + \norm{P(\Phi-\Psi)}_{L^2}),
    \end{align*}
    and a similar bound for $\norm{P_V (U_\Phi-U_\Psi)}_{L^2}$. Obtaining a similar estimate for $T(C_\Phi-C_\Psi)$, we arrive at 
    \begin{align*}
	\norm{C_\Phi-C_\Psi}_{H^1} \lesssim (1+ \norm{\Phi}_{C^1} + \norm{\Psi}_{C^1}) \norm{\Phi-\Psi}_{H^1}. 
    \end{align*}
    Similar arguments using sup norms everywhere yield 
    \begin{align*}
	\norm{C_\Phi-C_\Psi}_{C^1} \lesssim (1+ \norm{\Phi}_{C^1} + \norm{\Psi}_{C^1}) \norm{\Phi-\Psi}_{C^1}. 
    \end{align*}
    To proceed to higher-order derivatives, if $\bP^{\balpha} = P_V^{\alpha_1} P_T^{\alpha_2}$ is a derivative of order $|\alpha|$, setting $W = \bP^{\balpha}(U_\Phi-U_\Psi)$, we have $W|_{\partial_+ SM} = V^{\alpha_1}T^{\alpha_2} (C_\Phi-C_\Psi)$, $W|_{\partial_{-}SM} = 0$ and 
    \begin{align*}
	XW + \Phi W = - [\bP^\balpha, \Phi] (U_\Phi-U_\Psi),
    \end{align*}
    where the right-hand-side involves derivatives of $\Phi$ of order at most $|\balpha|$, and derivatives of $U_\Phi-U_\Psi$ of order at most $|\balpha|-1$. Combining the estimates of Lemma \ref{lem:workhorse0} and an induction on $k$ (whose formulation also involves control on $\norm{P_V^{\alpha_1}P_T^{\alpha_2}(U_\Phi-U_\Psi)}_{L^2(SM)}$ for all $\alpha_1+\alpha_2\le k$, and where the commuting frame $\{X,P_V,P_T\}$ avoids the proliferation of terms due to non-trivial commutators) proves the theorem for higher-order derivatives. 
\qed

\subsubsection{Proof of Theorem \ref{thm:forward} $\Phi$ and $\Psi$ supported up to $\partial M$} \label{proof:supporteverywhere}
 Consider a compact non-trapping surface $(M,g)$ with strictly convex boundary and let $\Phi\in C(M,\C^{n\times n})$ be a matrix-valued field. We shall call $R_{\Phi}\in C(SM,GL(n,\C))$ an {\it integrating factor} for $\Phi$ if
$R_{\Phi}$ is differentiable along the geodesic vector field $X$ and $XR_{\Phi}+\Phi R_{\Phi}=0$. Let $U_{\Phi}$ denote the unique integrating factor with $U_{\Phi}|_{\partial_{-}SM}=\id$. Recall that $C_{\Phi}=U_{\Phi}|_{\partial_{+}SM}$. First note that the work of the previous section also proves for every $k\ge 0$ that if $\Phi$ and $\Psi$ are $C^k$ matrix fields compactly supported inside of $M^{int}$, we also have
\begin{align}
    \begin{split}
	\|U_\Phi-U_\Psi\|_{C^k(SM)} &\lesssim (1+\|\Phi\|_{C^k} + \|\Psi\|_{C^k})^k \|\Phi-\Psi\|_{C^k(M)},  \\
	\|U_\Phi-U_\Psi\|_{H^k(SM)} &\lesssim (1+\|\Phi\|_{C^k} + \|\Psi\|_{C^k})^k \|\Phi-\Psi\|_{H^k(M)}.	
    \end{split}
    \label{eq:estUphi}
\end{align}

Let $\alpha:\partial SM\to\partial SM$ denote the scattering relation of the metric (i.e. the map that takes initial conditions of a geodesic at the moment of entry to final conditions at the moment of exit). If $R_\Phi$ denotes any other integrating factor for $\Phi$, then it must have the form $U_\Phi F^{\sharp}$, where $F^{\sharp}$ is the first integral  (i.e. $XF^{\sharp}=0$) determined by $F\in C(\partial_{+}SM,GL(n,\C))$. Thus $R_{\Phi}=U_{\Phi}F^{\sharp}$ and from this we deduce
\begin{equation}
    C_{\Phi}=R_{\Phi}(R_{\Phi}^{-1}\circ\alpha).
    \label{eq:CintermsofR}
\end{equation}
In particular, given two continuous matrix fields $\Phi$, $\Psi$, Equation \eqref{eq:CintermsofR} implies the identity on $\partial_+ SM$: 
\begin{align}
    C_\Phi - C_\Psi = (R_{\Phi}-R_\Psi) R_{\Phi}^{-1}\circ\alpha + R_{\Psi} (R_{\Phi}^{-1}-R_{\Psi}^{-1})\circ \alpha.
    \label{eq:cruxExtension}
\end{align}
To complete the proof of Theorem \ref{thm:forward} for $\Phi,\Psi$ supported up to the boundary, we then need to construct integrating factors with good regularity on $SM$ (i.e, at $\partial_0 SM$ included) and which behave continuously in terms of $\Phi$ and $\Psi$. To this end, we consider $(M,g)$ isometrically embedded in a closed manifold $(S,g)$. The Seeley extension theorem asserts that for any $k\geq 0$ there is a continuous extension map
\[E_{k}:C^{k}(M)\to C^{k}(S), \qquad E_{k} \colon H^k(M) \to H^k(S), \]
(It also works for $C^{\infty}$.)
We consider a slightly larger compact manifold with boundary $\widetilde{M}\subset S$ engulfing $M$ so that $(\widetilde{M},g)$ stays non-trapping and with strictly convex boundary.
We fix once and for all a smooth cut off function $\chi$ so that it has compact support in $\widetilde{M}^{int}$
and it equals $1$ near $M$. Thus given $\Phi\in C^{k}(M,\mathfrak{u}(n))$,
\[\widetilde{\Phi}:=\chi E_{k}(\Phi)\in C_{c}^{k}(\widetilde{M},\mathfrak{u}(n)),\]
and since $E_{k}$ is continuous,
\begin{equation}
    \norm{\widetilde{\Phi}}_{C^{k}}\lesssim \norm{\Phi}_{C^{k}}, \qquad \norm{\widetilde{\Phi}}_{H^{k}}\lesssim \norm{\Phi}_{H^{k}}.
    \label{eq:extension}
\end{equation}
Now by virtue of the work in Subsection \ref{proof:supportinside} applied to $\widetilde{\Phi}$ on $\widetilde{M}$, we can deduce estimates of the form  
\begin{align}
    \norm{U_{\widetilde{\Phi}}-\text{\rm id}}_{C^{k}}\lesssim\norm{\widetilde{\Phi}}^{k}_{C^{k}}, \qquad \norm{U^{-1}_{\widetilde{\Phi}}-\text{\rm id}}_{C^{k}}\lesssim\norm{\widetilde{\Phi}}^{k}_{C^{k}}.    
    \label{eq:boundU}
\end{align}
We then take as smooth integrating factors $R_{\Phi}:=U_{\widetilde{\Phi}}|_{SM}$ and $R_{\Psi}:=U_{\widetilde{\Psi}}|_{SM}$. Combining \eqref{eq:extension} and \ref{eq:boundU} we derive
\begin{equation}
    \norm{R_{\Phi}-\id}_{C^{k}}\lesssim\norm{\Phi}^{k}_{C^{k}}, \qquad \norm{R^{-1}_{\Phi}-\id}_{C^{k}}\lesssim\norm{\Phi}^{k}_{C^{k}}.
    \label{eq:R}
\end{equation}
Combining \eqref{eq:extension} and \eqref{eq:estUphi} applied to $U_{\widetilde{\Phi}}$ and $U_{\widetilde{\Psi}}$, we obtain 
\begin{align}
    \begin{split}
	\|R_\Phi - R_{\Psi}\|_{C^k} \le \|U_{\widetilde{\Phi}} - U_{\widetilde{\Psi}}\|_{C^k} &\lesssim (1+\|\widetilde{\Phi}\|_{C^k} + \|\widetilde{\Psi}\|_{C^k})^k \|\widetilde{\Phi}-\widetilde{\Psi}\|_{C^k(M)} \\
	&\lesssim (1+\|\Phi\|_{C^k} + \|\Psi\|_{C^k})^k \|\Phi-\Psi\|_{C^k(M)},	
    \end{split}
    \label{eq:Rdiff}    
\end{align}
and similarly for $\|R_\Phi^{-1} - R_{\Psi}^{-1}\|_{C^k}$, and for $H^k$ norms. Then the proof for Theorem \ref{thm:forward} for $\Phi,\Psi$ supported up to the boundary consists in applying the product rule to \eqref{eq:cruxExtension} and using estimates \eqref{eq:R} and \eqref{eq:Rdiff}.

\subsection{Stability estimate - proof of Theorem \ref{thm:stability}}

\subsubsection{Setting, main results and proofs of Theorem \ref{thm:stability} and Corollary \ref{cor:stab2}} Before considering the non-linear inverse problem, we must establish a stability estimate for a linear inverse problem, that of reconstructing a function $f\in C^\infty(M,\Cm^n)$ from its {\bf attenuated X-ray transform}, where the attenuation is matrix-valued. Namely, given $\Phi$ a smooth skew-hermitian matrix in $M$, we define $I_\Phi f := u^f|_{\partial_+ SM}$, where $u=u^{f}:SM\to \Cm^n$ is the unique solution to the problem
\begin{align*}
    Xu + \Phi u = -f \qquad (SM), \qquad u|_{\partial_- SM} = 0.
\end{align*}

The injectivity of such a transform was proved in \cite{PSUGAFA}, and we now provide a stability estimate for it. 

\begin{Theorem} \label{thm:stabLinear} Let $(M,g)$ be a simple Riemannian surface with boundary and $\Phi$ a smooth, skew-hermitian matrix field in $M$. Then for any $f\in C^\infty(M)$, we have the following stability estimate
    \begin{align}
	\|f\|_{L^2(M,\Cm^n)} \le C_1 (1+\|\Phi\|_{C^1}) e^{C_2 \|\Phi\|_{C^1}} \|I_\Phi f\|_{H^1(\partial_+ SM, \Cm^n)}.
	\label{eq:stabLinear}
    \end{align}        
\end{Theorem}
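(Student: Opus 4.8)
The plan is to run the energy-identity (``$L^2\to H^1$ stability'') scheme of \cite[Theorem 3.4.3]{Sharafut99} for the geodesic X-ray transform, upgraded to the matrix-attenuated setting along the lines of \cite{PSUGAFA}. Write $u=u^f:SM\to\C^n$ for the solution of $Xu+\Phi u=-f$ with $u|_{\partial_-SM}=0$, so that $u|_{\partial_+SM}=I_\Phi f$, and note that, because $f$ is a function on $M$ (hence $Vf=0$, and also $V\Phi=0$), applying $V$ to the transport equation gives $VXu=-\Phi\,Vu$, which is only ``lower order'' in $u$. I would first reduce to $f$ (and $\Phi$) compactly supported in $M^{\mathrm{int}}$, exactly as in Section~\ref{proof:supporteverywhere}: extend $(M,g)$ to a non-trapping surface with strictly convex boundary, cut off, and transfer the final estimate back via the identity \eqref{eq:cruxExtension}. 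This is convenient because then $Xu$ vanishes on $\partial SM$, which later lets the boundary terms be written purely through tangential ($T,V$) derivatives.

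With this reduction, integrating by parts on $SM$ against the global commuting frame $\{X,P_T,P_V\}$ of Lemma~\ref{lem:PVPT} (using Santal\'o's formula \eqref{eq:santalo} and the Lie-derivative invariance of $d\Sigma^3$) produces a Pestov-type identity for $u$ carrying the weight $\Phi$, schematically of the form
\begin{align*}
\|f\|_{L^2(SM)}^2 = (\mathcal R_\Phi u,u)_{L^2(SM)} \;-\; \|(X+\Phi)P_V u\|_{L^2(SM)}^2 \;-\; \mathcal B[u],
\end{align*}
where $\mathcal B[u]$ is a boundary integral over $\partial SM$ which, since $u|_{\partial_-SM}=0$ and $(T,V)$ is an orthonormal frame on $\partial_+SM$, is bounded by $C\,\|I_\Phi f\|_{H^1(\partial_+SM)}^2$, and $\mathcal R_\Phi$ is a self-adjoint ``curvature operator'' built from the Gaussian curvature $\kappa$, the connection-type quantities obtained by differentiating $\Phi$, and zeroth-order terms in $\Phi$; after grouping the manifestly non-negative (quadratic-in-$\Phi$) pieces with the good term one is left with $(\mathcal R_\Phi u,u)$, where the surviving ``bad'' part of $\mathcal R_\Phi$ has operator norm $\lesssim 1+\|\Phi\|_{C^1}$. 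Since $f$ has angular degree $0$, $\|f\|_{L^2(M)}^2=(2\pi)^{-1}\|f\|_{L^2(SM)}^2$, so it remains to control $(\mathcal R_\Phi u,u)$ from above.

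As in \cite{PSUGAFA}, I would kill $(\mathcal R_\Phi u,u)$ by inserting an auxiliary \emph{scalar} (abelian) connection: pick a real $1$-form $a$ on $M$ with $i\star F_a=-\rho$ for a positive function $\rho$ dominating $\mathcal R_\Phi$, which can be arranged with $\|a\|_{C^1}\lesssim 1+\|\Phi\|_{C^1}$, and apply the Pestov identity \emph{with the connection} $X+a$ from \cite{PSUGAFA}, whose curvature term $i\star F_a$ now absorbs $\mathcal R_\Phi$. The price is that this identity governs the transport equation carrying the extra connection $a$, not the one at hand; to cross back I would use a scalar holomorphic integrating factor, i.e.\ a function $w\in C^\infty(SM)$, holomorphic in the angular variable, with $Xw=-a$ (so that $e^{w}$ integrates the abelian connection). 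After Fourier decomposition in $v$, the existence of such $w$ reduces to solving finitely many equations $\eta_-w_k=(\cdot)$ on a simple surface, which is possible by the surjectivity statements in \cite{PSUGAFA} resting on the microlocal properties of the normal operator of the geodesic X-ray transform on functions (ellipticity plus a Fredholm argument), and comes with the bound $\|e^{\pm w}\|_{C^1(SM)}\lesssim (1+\|a\|_{C^1})\,e^{C\|a\|_{\infty}}$. Crucially $w$ is scalar, hence commutes with $\Phi$ and with the $\C^n$-structure, so conjugating the transport equation by $e^{\pm w}$ is clean: it matches $I_\Phi f$ (up to the scalar factor $e^{\pm w}|_{\partial_+SM}$) with the datum of the $a$-augmented transform, to which the previous step applies. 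Combining the two estimates, the cost of passing $f$ and $I_\Phi f$ through $e^{\pm w}$ in the $L^2(M)$- and $H^1(\partial_+SM)$-norms is $\lesssim (1+\|a\|_{C^1})e^{C\|a\|_\infty}\lesssim (1+\|\Phi\|_{C^1})e^{C_2\|\Phi\|_{C^1}}$, yielding \eqref{eq:stabLinear} with $C_1,C_2$ depending only on $(M,g)$.

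I expect the main obstacle to be the second paragraph: extracting the Pestov identity with the matrix weight, and organising \emph{all} of the additional terms (and of the boundary term) so that nothing worse than a first-order-linear dependence on $\|\Phi\|_{C^1}$ survives in $\mathcal R_\Phi$ --- this is precisely what keeps the auxiliary connection of size $O(\|\Phi\|_{C^1})$, hence the integrating-factor cost $e^{C_2\|\Phi\|_{C^1}}$ rather than $e^{C_2\|\Phi\|_{C^1}^2}$ --- which in turn requires checking that the genuinely quadratic-in-$\Phi$ contributions all carry a favourable sign and may be discarded. A secondary technical point is regularity near the glancing region $S\partial M$, where the frame of Lemma~\ref{lem:PVPT} is merely continuous; the interior-support reduction and the enlargement argument above are designed to sidestep it.
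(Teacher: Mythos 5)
Your high-level strategy --- a Pestov identity carrying the matrix weight, an auxiliary scalar connection of curvature of size $O(\|\Phi\|_{C^1})$, and scalar holomorphic integrating factors whose cost produces the factor $e^{C_2\|\Phi\|_{C^1}}$ --- is indeed the paper's, but there is a genuine gap at the step you yourself identify as the crux. The curvature contribution of a connection $A$ to the Pestov identity is $-(\star F_A u, Vu)$, which on the Fourier decomposition $u=\sum_k u_k$ carries a factor $k$ in each mode and is therefore \emph{indefinite} on a general $u$: a fixed sign of $i\star F_a$ makes this term positive on one half of the Fourier modes and negative on the other. So it cannot ``absorb $\mathcal R_\Phi$'' in the way you propose. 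This is precisely why the paper does not apply the Pestov identity to $u$ (or to $e^{sw}u$) but to the projection $v_s=\Pi_-(e^{sw}u)$, and symmetrically to $\Pi_+(e^{-s'\bar{w}}u)$. The price of projecting is that $v_s$ no longer solves a clean transport problem ($G_sv_s=[G_s,\Pi_-](e^{sw}u)$ is concentrated in degrees $0$ and $-1$), so one needs Lemma \ref{lem:HIF} to reassemble $u$ from $u_0$ and the two projections, the identity \eqref{eq:temp} to express $\eta_+u_{-1}+\tfrac12\Phi u_0$ through $v_s$, the separate estimate \eqref{eq:estu0} for the zero mode, and the telescoping computation \eqref{eq:tedious} together with Lemma \ref{lem:s} to verify that the surviving $\Phi$-terms are dominated by $s\sum_{k<0}|k|\,|(v_s)_k|^2$ once $s\gtrsim\|\Phi\|_{C^1}$. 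None of this is replaced by the ``clean conjugation'' you describe; without it the argument does not close, even though your accounting of where the constants $(1+\|\Phi\|_{C^1})e^{C_2\|\Phi\|_{C^1}}$ come from is correct in spirit.

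Two further points. First, the Pestov identity lives on the frame $\{X,X_\perp,V\}$ and its non-trivial commutators; the commuting frame $\{X,P_T,P_V\}$ of Lemma \ref{lem:PVPT} is the right tool for the \emph{forward} estimates, but since $[X,P_V]=0$ it would produce no curvature terms at all, so your schematic identity must be phrased with $V$, not $P_V$. Second, the reduction to interior support by enlarging the domain does not work for a lower bound: extending $f$ by zero makes $u$ fail to be $C^1$ across the glancing set of the original $\partial M$ (now an interior hypersurface), while extending $f$ smoothly changes the data on the larger manifold in a way that is not controlled by $\|I_\Phi f\|_{H^1(\partial_+SM)}$; also \eqref{eq:cruxExtension} concerns the nonlinear data $C_\Phi$ and has no stated analogue for $I_\Phi f$. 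The paper instead proves the estimate for smooth $u$ on interior subdomains $M_\varepsilon$ and lets $\varepsilon\to0$, using the boundedness of $Tu$ and $Vu$ away from the glancing (Lemmas \ref{lemma:tau} and \ref{lemma:u}).
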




\begin{Remark}[Dependence of $C_1,C_2$] {\rm The constants $C_1,C_2$ only depend on the geometry of $(M,g)$. The constant $C_1$ blows up like $(\beta-1)^{-1}$, where $\beta$ is the terminator constant of $(M,g)$. This is one of the ways that this stability estimate ceases to hold as one approaches non-simplicity. The main other quantity appearing in $C_1, C_2$ is $w_\infty$, the sup norm of the integrating factor defined below. The behavior of such a quantity, while finite on any simple surface, remains to be better understood.   } 
\end{Remark}

On to the non-linear stability estimate, injectivity of the operator $\Phi\to C_\Phi$ restricted to $\u(n)$-valued fields was initially proved in \cite{PSUGAFA}, and Theorem \ref{thm:stability} upgrades this result with a stability estimate. While the remaining sections will focus on the proof of Theorem \ref{thm:stabLinear}, we now explain how this result implies Theorem \ref{thm:stability}. The main additional ingredient needed is a pseudo-linearization identity, relating scattering data to attenuated X-ray transforms: 

\begin{Lemma}[Pseudo-linearization]\label{lem:pseudolin}
    Let $(M,g)$ be a non-trapping surface with strictly convex boundary. For any $\Phi, \Psi\in C(M,\Cm^{n\times n})$, the following relation holds
    \begin{equation}
	C_{\Phi}C_{\Psi}^{-1}=\text{\rm \id}+I_{\Theta(\Phi,\Psi)}(\Phi-\Psi),
	\label{eq:two}
    \end{equation}
    where $I_{\Theta(\Phi,\Psi)}\colon L^2(M, \Cm^{n\times n})\to L^2(\partial_+ SM, \Cm^{n\times n})$ is an attenuated X-ray transform with matrix field $\Theta(\Phi, \Psi)$, an endomorphism of $\Cm^{n\times n}$ with pointwise action 
    \begin{align*}
	\Theta(\Phi, \Psi)\cdot U = \Phi U - U\Psi, \qquad U\in \Cm^{n\times n}.
    \end{align*}
\end{Lemma}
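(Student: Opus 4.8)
The plan is to derive the pseudo-linearization identity \eqref{eq:two} by a direct computation with the defining ODEs of the integrating factors, exploiting the $GL(n,\mathbb C)$-structure of the problem. Recall from Section \ref{proof:supporteverywhere} that $U_\Phi$ denotes the unique integrating factor with $XU_\Phi + \Phi U_\Phi = 0$ on $SM$ and $U_\Phi|_{\partial_- SM} = \id$, and that $C_\Phi = U_\Phi|_{\partial_+ SM}$ (equivalently one may work with the convention $U|_{\partial_+SM}=\id$ and read off $C_\Phi$ at $\partial_-SM$; the two differ by composition with the scattering relation and give equivalent identities). First I would introduce the matrix field $W := U_\Phi U_\Psi^{-1}: SM \to GL(n,\mathbb C)$ and compute its derivative along the geodesic vector field. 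Using the product rule, $X(U_\Psi^{-1}) = -U_\Psi^{-1}(XU_\Psi)U_\Psi^{-1} = U_\Psi^{-1}\Psi$, so
\begin{align*}
    XW = (XU_\Phi)U_\Psi^{-1} + U_\Phi X(U_\Psi^{-1}) = -\Phi U_\Phi U_\Psi^{-1} + U_\Phi U_\Psi^{-1}\Psi = -\Phi W + W\Psi.
\end{align*}
In other words, $XW = -(\Phi W - W\Psi) = -\Theta(\Phi,\Psi)\cdot W$, where $\Theta(\Phi,\Psi)$ acts on $\Cm^{n\times n}$ exactly as in the statement.

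Next I would set $u := W - \id$ and check that it solves the attenuated transport equation driving the attenuated X-ray transform $I_{\Theta(\Phi,\Psi)}$. Since $X\id = 0$ we get $Xu = XW = -\Theta\cdot W = -\Theta\cdot u - \Theta\cdot \id = -\Theta\cdot u - (\Phi - \Psi)$, using that $\Theta(\Phi,\Psi)\cdot \id = \Phi\,\id - \id\,\Psi = \Phi - \Psi$. Therefore
\begin{align*}
    Xu + \Theta(\Phi,\Psi)\cdot u = -(\Phi - \Psi) \qquad \text{on } SM.
\end{align*}
For the boundary condition, note that $U_\Phi|_{\partial_- SM} = U_\Psi|_{\partial_- SM} = \id$, hence $W|_{\partial_- SM} = \id$ and $u|_{\partial_- SM} = 0$. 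This is precisely the initial value problem defining $u^{\Phi-\Psi}$ for the attenuated X-ray transform with matrix attenuation $\Theta(\Phi,\Psi)$, so by uniqueness of solutions of this linear transport ODE along geodesics (valid on a non-trapping surface with strictly convex boundary, since every geodesic has finite length) we conclude $u = u^{\Phi-\Psi}$ and thus $I_{\Theta(\Phi,\Psi)}(\Phi-\Psi) = u|_{\partial_+ SM} = W|_{\partial_+ SM} - \id = (U_\Phi U_\Psi^{-1})|_{\partial_+ SM} - \id = C_\Phi C_\Psi^{-1} - \id$, which is \eqref{eq:two}. Finally I would remark that $\Theta(\Phi,\Psi)$, viewed fibrewise as the endomorphism $U \mapsto \Phi U - U\Psi$ of $\Cm^{n\times n} \cong \Cm^{n^2}$, is continuous on $M$ whenever $\Phi,\Psi$ are (it is affine-linear in $(\Phi,\Psi)$ with coefficients given by left/right multiplications), so the attenuated transform $I_{\Theta(\Phi,\Psi)}$ is a bona fide bounded operator $L^2(M,\Cm^{n\times n}) \to L^2(\partial_+ SM,\Cm^{n\times n})$ — boundedness being exactly the content of estimate \eqref{eq:basicL2_2} in Lemma \ref{lem:workhorse0} applied with attenuation $\Theta(\Phi,\Psi)$ in place of $\Phi$.

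There is essentially no serious obstacle here: the proof is a short ODE manipulation, and the only points requiring a word of care are (i) keeping track of which boundary component carries the identity initial condition so that the $-\id$ term in \eqref{eq:two} comes out with the correct sign, and (ii) making sure the endomorphism $\Theta(\Phi,\Psi)$ genuinely falls within the class of matrix attenuations covered by Lemma \ref{lem:workhorse0}, i.e. that the work-horse estimates and the well-posedness of the transport equation apply verbatim when the "attenuation" acts by $U\mapsto \Phi U - U\Psi$ rather than by left multiplication. Both are routine. The mild subtlety worth flagging explicitly is that $\Theta(\Phi,\Psi)$ need \emph{not} be skew-hermitian as an endomorphism even when $\Phi,\Psi$ are, but Lemma \ref{lem:workhorse0} and the well-posedness statement only used continuity of the attenuation (the $U(n)$-invariance trick was a convenience for sharpening constants, not a necessity for existence/uniqueness and the $L^\infty/L^2$ bounds), so nothing is lost; when we later invoke this lemma inside the proof of Theorem \ref{thm:stability}, the skew-hermitian structure of $\Phi$ and $\Psi$ individually is what will be exploited via the Pestov identity, not that of $\Theta$.
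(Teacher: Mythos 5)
Your proof is correct and follows essentially the same route as the paper: the paper also sets $W = U_\Phi U_\Psi^{-1} - \id$, verifies $XW + \Phi W - W\Psi = -(\Phi-\Psi)$ with $W|_{\partial_- SM}=0$, and reads off the identity from the definition of $I_{\Theta(\Phi,\Psi)}$. Your additional remarks on well-posedness and on $\Theta$ not being skew-hermitian are accurate but go beyond what the paper records for this lemma.
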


\begin{proof}[Proof of Lemma \ref{lem:pseudolin}] With $U_\Phi, U_\Psi$ the fundamental solutions of $XU_\Phi + \Phi U_\Phi = 0$ with $U_\Phi|_{\partial_- SM} = \id$ and $U_\Phi|_{\partial_+ SM} = C_\Phi$ (similarly for $\Psi$), denote $W:= U_\Phi U_\Psi^{-1} - \id$. A direct computation shows that
    \begin{align*}
	XW + \Phi W - W\Psi = - (\Phi-\Psi) \qquad (SM), \qquad W|_{\partial_- SM} = 0,
    \end{align*}
    and thus by the definition of the attenuated X-ray transform, $W|_{\partial_+ SM} = I_{\Theta(\Phi,\Psi)}(\Phi-\Psi)$. Since we also have by construction $W|_{\partial_+ SM} = C_\Phi C_{\Psi}^{-1}-\id$, identity \eqref{eq:two} follows.     
\end{proof}

\begin{proof}[Proof of Theorem \ref{thm:stability}] Appealing to the pseudo-linearization \eqref{eq:two}, one may notice that if $\Phi$, $\Psi$ are skew-hermitian, then the field $\Theta(\Phi,\Psi)$ is skew-hermitian when viewed as an endomorphism of $\Cm^{n\times n}$. Moreover, since the entries of $\Theta(\Phi,\Psi)$ are linear in the entries of $\Phi$ and $\Psi$, we directly have that
    \begin{align*}
	\|\Theta(\Phi,\Psi)\|_{C^1} \le C (\|\Phi\|_{C^1} \vee \|\Psi\|_{C^1}), 
    \end{align*}
    with $C$ a universal constant. Then relation \eqref{eq:two}, together with Theorem \ref{thm:stabLinear} immediately implies
    \begin{align*}
	\|\Phi-\Psi\|_{L^2(M,\Cm^{n\times n})} &\le C_1 (1+ \|\Theta(\Phi,\Psi)\|_{C^1})\ e^{C_2 \|\Theta(\Phi,\Psi)\|_{C^1}} \|I_{\Theta(\Phi,\Psi)}(\Phi-\Psi)\|_{H^1(\partial_+ SM)} \\
	&\le C_1' (1+ \|\Phi\|_{C^1}\vee \|\Psi\|_{C^1})\ e^{C_2' (\|\Phi\|_{C^1} \vee \|\Psi\|_{C^1})} \|C_{\Phi}C_{\Psi}^{-1} - \id\|_{H^1(\partial_+ SM)}.
    \end{align*}    
    This shows Theorem \ref{thm:stability} when $\Phi,\Psi\in C^\infty(M, {\mathfrak u}(n))$. Since all quantities involved above do not depend on derivatives of $\Phi, \Psi$ of order higher than $1$, and $C_1, C_2$ are independent of $\Phi,\Psi$, approximating $\Phi,\Psi\in C^1(M, {\mathfrak u}(n))$ by sequences in $C^\infty(M, {\mathfrak u}(n))$ (and using Theorem \ref{thm:forward})  will yield the same stability estimate for $C^1$ matrix fields.
\end{proof}

We also cover the proof of Corollary \ref{cor:stab2}, based on the previous result and the forward estimate Theorem \ref{thm:forward}. 

\begin{proof}[Proof of Corollary \ref{cor:stab2}] It is enough to show that 
    \begin{align}
	\|C_\Phi C_\Psi^{-1} - \id\|_{H^1} \lesssim (1 + \|\Psi\|_{C^1}) \norm{C_{\Phi}-C_{\Psi}}_{H^{1}}.
	\label{eq:wts}
    \end{align}
    To show this, we write at the pointwise level: 
    \begin{align*}
	\frob{C_\Phi C_{\Psi}^{-1} - \id} = \frob{(C_\Phi - C_\Psi)C_{\Psi}^{-1}} = \frob{C_\Phi - C_\Psi},
    \end{align*}
    hence $\|C_\Phi - C_\Psi\|_{L^2} = \|C_\Phi C_\Psi^{-1} - \id\|_{L^2}$. To control first derivatives, take $P = V$ or $T$, we have 
    \begin{align*}
	\frob{P(C_\Phi C_\Psi^{-1} - \id)} &= \frob{P (C_\Phi - C_\Psi) + (\id-C_\Phi C_\Psi^{-1})PC_\Psi} \\
	&\le \frob{P(C_\Phi - C_\Psi)} + \frob{PC_\psi} \frob{\id - C_\Phi C_\Psi^{-1}} 
    \end{align*}
    using triangle inequality and submultiplicativity. Squaring, taking the sup norm of $\frob{PC_\Psi}$ and integrating on $\partial_+ SM$, we obtain
    \begin{align*}
	\|P(C_\Phi C_\Psi^{-1} - \id)\|_{L^2}^2 \le 2 (\|P(C_\Phi - C_\Psi)\|_{L^2}^2 + \|PC_\Psi\|_\infty^2 \|C_\Phi-C_\Psi\|_{L^2}^2). 
    \end{align*}
    Combining the estimates for $P=V$ and $P = T$ we arrive at
    \begin{align*}
	\|C_\Phi C_\Psi^{-1} - \id\|_{H^1}^2 &\le (1 + 2\|VC_\Psi\|_{L^\infty}^2 + 2\|TC_\Psi\|_{L^\infty}^2) \|C_\Phi-C_\Psi\|_{L^2}^2 \\
	& \qquad + 2 \|V(C_\Phi-C_\Psi)\|_{L^2}^2 + 2 \|T(C_\Phi-C_\Psi)\|_{L^2}^2.
    \end{align*}
    Now using the forward estimate \eqref{eq:continf} with $k=1$ and $\Phi \equiv 0$ (thus $C_\Phi = \id$), we deduce that 
    \begin{align*}
	1 + 2\|VC_\Psi\|_{L^\infty}^2 + 2\|TC_\Psi\|_{L^\infty}^2 \lesssim 1+ \|\Psi\|_{C^1}^2.
    \end{align*}
    This yields the estimate $\|C_\Phi C_\Psi^{-1} - \id\|_{H^1}^2 \lesssim (1 + \|\Psi\|_{C^1}^2) \norm{C_{\Phi}-C_{\Psi}}^2_{H^{1}}$, and taking squareroots yields \eqref{eq:wts} (using that $\sqrt{1+x^2}/(1+x)$ is uniformly bounded for $x\in [0,\infty)$). 
\end{proof}

\subsubsection{Proof of Theorem \ref{thm:stabLinear} - Main outline} As in \cite{PSUGAFA}, the main method of proof involves an energy identity (or Pestov identity), based on integrations by parts on $SM$. To do this, let us recall that with the inner product $(u,v)$ defined in \eqref{eq:L2inner}, and upon also denoting 
\begin{align*}
    (u,v)_{\partial SM} := \int_{\partial SM} u \overline{v} d\Sigma^2, 
\end{align*}
the following integrations by parts formulas holds for $u, v \in C^{\infty}(SM,\C^n)$: 
\begin{align}
    \begin{split}
	(Vu, v) &= -(u,Vv), \qquad (Vu, v)_{\partial SM} = (u, Vv)_{\partial SM}, \\
	(Xu, v) &= -(u, Xv) + (\mu u, v)_{\partial_+ SM}, \qquad \mu(x,v) := - \langle v, \nu_x\rangle.
    \end{split} 
    \label{eq:IBP}    
\end{align}

We will also use extensively the harmonic decomposition on the fibers of $SM$. Namely, the space $L^{2}(SM,\C^n)$ decomposes orthogonally as a direct sum
\[L^{2}(SM,\C^n)=\bigoplus_{k\in\mathbb Z}H_{k}\]
where $H_k$ is the eigenspace of $-iV$ corresponding to the eigenvalue $k$. A function $u\in L^{2}(SM,\C^n)$ has a Fourier series expansion 
\[u=\sum_{k=-\infty}^{\infty}u_{k},\] 
where $u_{k}\in H_k$. Let $\Omega_{k}=C^{\infty}(SM,\C^n)\cap H_{k}$. Of special interest are the operators 
\begin{align}
    \eta_\pm := \frac{1}{2} (X\pm iX_\perp), 
    \label{eq:etapm}
\end{align}
with the property that $\eta_\pm (\Omega_k)\subset \Omega_{k\pm 1}$ for all $k\in \mathbb{Z}$. For more details on the operators $\eta_{\pm}$ and the Fourier expansion we refer to \cite{GK80} where these tools were first introduced.

\begin{Definition} A function $u:SM\to\C^n$ is said to be holomorphic if $u_{k}=0$ for all $k<0$. Similarly, $u$ is said to be antiholomorphic if $u_{k}=0$ for all $k>0$.
\end{Definition}

To control the terms involving the matrix field, one must introduce an artificial connection as we will see below. This first requires that we derive a Pestov identity for X-ray transforms with connection $A$ and matrix\footnote{The matrix field $\Phi$ is also referred to as a 'Higgs' field in the literature.} field $\Phi$. Namely, given a skew hermitian pair $(A,\Phi)$ on the bundle $M\times \Cm^n$ and $f\in C^\infty(M,\Cm^n)$, we define $I_{A,\Phi} f = u|_{\partial_+ SM}$, where $u$ is the unique solution to the problem
\begin{align*}
    Gu = -f \qquad (SM), \qquad u|_{\partial_{-}SM} = 0, \qquad (G:= X+A+\Phi).
\end{align*}

While previous Pestov identities have been derived in \cite{PSUGAFA}, the present one accounts for nonzero boundary terms, and in particular reflects more precisely how the stability constant degrades as $(M,g)$ approaches non-simplicity. This is captured by the concept of {\em terminator constant} $\beta_{\text{Ter}}$: given a simple surface $(M,g)$, there exists a number $\beta_{\text{Ter}}>1$ such that for any $\beta\in (1,\beta_{\text{Ter}}]$, there exists a smooth function $r = r_\beta:SM\to {\mathbb R}$, solution to the Riccati type equation $Xr + r^2 + \beta \kappa = 0$.

\begin{Theorem}\label{thm:pestovid} Let $(M,g)$ a simple surface with boundary, with terminator constant $\beta_{\text{Ter}}>1$, and $(A,\Phi)$ a skew-hermitian pair on the bundle $M\times \C^n$. Then for any $u\in C^\infty(SM, \C^n)$ and $\beta\in (1,\beta_{\text{Ter}}]$, the following identity holds: 
    \begin{align}
	\begin{split}
	    \frac{1}{\beta} \|GVu &- r_\beta Vu\|^2 + \frac{\beta-1}{\beta} \|GVu\|^2 + \|Gu\|^2 - \|VG u\|^2  \\
	    & - (\star F_A u, Vu) - \Re (\Phi u,Gu) - \Re( (\star d_A\Phi)u, Vu) \\
	    &= \Re(\nabla_{T,A}u, Vu)_{\partial SM} + \Re(\langle v^\perp, \nu \rangle \Phi Vu, u)_{\partial SM} - \frac{1}{\beta} (\mu\ r_\beta Vu, Vu)_{\partial SM}.
	\end{split}
	\label{eq:Pestov3}
    \end{align}
\end{Theorem}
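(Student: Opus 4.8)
The plan is to adapt the derivation of the Pestov (energy) identity for a connection $A$ and Higgs field $\Phi$ given in \cite{PSUGAFA}, with two modifications: keep \emph{all} boundary integrals over $\partial SM$ (in \cite{PSUGAFA} these are discarded under $u|_{\partial_-SM}=0$ or because $SM$ is closed), and use the $\beta$-Riccati completion of squares so as to avoid any sign hypothesis on the Gaussian curvature $\kappa$. Write $G:=X+A+\Phi$ and $G_\perp:=[G,V]=X_\perp-VA$; here one uses $[X,V]=X_\perp$, the Leibniz rule $V(fw)=(Vf)w+fVw$, and $V\Phi=0$ (as $\Phi\in\Omega_0$, while $A\in\Omega_{-1}\oplus\Omega_1$, cf.\ \cite{GK80}), so that up to the sign of its connection part $G_\perp$ is the horizontal--perpendicular covariant derivative. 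The geometric heart of the matter is the list of commutators: besides the above, from the structure equations $X=[V,X_\perp]$, $[X,X_\perp]=-\kappa V$, together with $V^2A=-A$ (because $A$ has fibrewise Fourier degrees $\pm1$) and the definitions of the curvature $F_A$ and covariant differential $d_A\Phi$, one records
\[
GVu = VGu + G_\perp u,\qquad [V,G_\perp]=G-\Phi,\qquad [G,G_\perp]=-\kappa V+\star F_A-\star d_A\Phi,
\]
(the last two as multiplication operators, up to fixed sign conventions), where $\star$ is the Hodge star of $(M,g)$ evaluated along $v^\perp$ after restriction to $SM$. The $-\Phi$ correction in $[V,G_\perp]$ --- present because $V^2\Phi=0$ rather than $-\Phi$ --- is exactly what will generate the term $\Re(\Phi u,Gu)$ in \eqref{eq:Pestov3}; this is the structural reason the Higgs field cannot be absorbed into the connection.

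First I would prove the \emph{raw energy identity}
\begin{align*}
  &\|GVu\|^2 - (\kappa Vu,Vu) + \|Gu\|^2 - \|VGu\|^2 - (\star F_A u,Vu) - \Re(\Phi u,Gu) - \Re((\star d_A\Phi)u,Vu)\\
  &\qquad = \Re(\nabla_{T,A}u,Vu)_{\partial SM} + \Re(\langle v^\perp,\nu\rangle\,\Phi Vu,u)_{\partial SM}.
\end{align*}
Starting from $\|GVu\|^2=\|VGu\|^2+2\,\Re(VGu,G_\perp u)+\|G_\perp u\|^2$, integrate the cross term by parts in $V$ (no boundary term, $V$ being tangent to the closed fibres of $SM$), replace $VG_\perp u$ using $[V,G_\perp]=G-\Phi$, integrate by parts once more in $X$ --- the step that produces the $\mu$-weighted integrals over $\partial SM$ --- and in $X_\perp$ (which produces the $\langle v^\perp,\nu\rangle$-weighted integrals), and substitute $[G,G_\perp]$. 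Exactly as in the scalar Pestov identity the $\|G_\perp u\|^2$ contributions cancel, and the skew-Hermiticity of the pair $(A,\Phi)$ kills every term of the form $\Re\langle(A+\Phi)w,w\rangle$; what remains is the displayed bulk identity. The surviving boundary integrals are then regrouped using the boundary frame $(T,V)$ of \eqref{eq:TV}, the relations $T=\mu_\perp X+\mu X_\perp$ and $[V,T]=0$, so that the $X$- and $X_\perp$-pieces assemble into the covariant tangential derivative $\nabla_{T,A}u$ together with the separate Higgs boundary term.

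To complete the proof, dispose of the adverse sign of $-(\kappa Vu,Vu)$: fix $\beta\in(1,\beta_{\text{Ter}}]$ and let $r=r_\beta:SM\to\mathbb R$ solve $Xr+r^2+\beta\kappa=0$ --- available by the definition of the terminator constant. Using that $r$ is real, the skew-Hermiticity of $A+\Phi$, the identity $2\,\Re\langle X(Vu),Vu\rangle=X|Vu|^2$, and one integration by parts in $X$, one checks
\[
\|GVu\|^2 - (\kappa Vu,Vu) = \tfrac1\beta\|GVu - r_\beta Vu\|^2 + \tfrac{\beta-1}\beta\|GVu\|^2 + \tfrac1\beta(\mu\,r_\beta Vu,Vu)_{\partial SM},
\]
and substituting this into the raw identity, moving the last boundary term to the right-hand side, yields \eqref{eq:Pestov3} for every $u\in C^\infty(SM,\C^n)$.

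\textbf{Main obstacle.} The delicate part is not the algebra of the Pestov identity itself --- which is classical once the commutators above are in place --- but the boundary bookkeeping: in \cite{PSUGAFA} the identity is arranged so that every boundary term vanishes, whereas here each integration by parts in $X$ and $X_\perp$ leaves a $\mu$- or $\langle v^\perp,\nu\rangle$-weighted integral over $\partial SM$. One must verify that, once re-expressed through the frame $(T,V)$, all of these coalesce into precisely the three boundary terms on the right of \eqref{eq:Pestov3}, and in particular that the Higgs contributions separate cleanly into the bulk term $\Re(\Phi u,Gu)$, the curvature-type term $\Re((\star d_A\Phi)u,Vu)$ and the boundary term $\Re(\langle v^\perp,\nu\rangle\,\Phi Vu,u)_{\partial SM}$. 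Fixing all sign conventions consistently while tracking which commutator is responsible for which term is where essentially all the work lies.
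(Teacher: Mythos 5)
Your proposal is correct and follows essentially the same route as the paper: establish the structure equations for $G$, $G_\perp$, $V$, derive the raw Pestov identity with all boundary terms retained and regrouped via the frame $(T,V)$ into $\Re(\nabla_{T,A}u,Vu)_{\partial SM}+\Re(\mu_\perp\Phi Vu,u)_{\partial SM}$, and then absorb the index-form term $\|GVu\|^2-(\kappa Vu,Vu)$ using the Riccati solution $r_\beta$ of $Xr+r^2+\beta\kappa=0$, exactly as in the paper. The only (immaterial) difference is that you expand $\|GVu\|^2=\|VGu+G_\perp u\|^2$ and integrate the cross term by parts, whereas the paper packages the same integrations by parts into the operator identity $[GV,VG]=-G^2+G\Phi+\kappa V^2+\star F_AV+(\star d_A\Phi)V$; your flagged sign ambiguity in $[G,G_\perp]$ resolves to $-\kappa V-\star F_A-\star d_A\Phi$ with the paper's conventions.
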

In the identity above, 
\begin{align}
    \star d_A \Phi := X_\perp \Phi + [\Phi, V(A)], \qquad \nabla_{T,A}u = Tu + A(x,\nu^\perp) u,    
    \label{eq:Aquantities}
\end{align}
and $r$ is a smooth function on $SM$ which only depends on the surface. The quantity $\star F_A$ is the curvature of the connection $A$, which upon a {\bf judicious choice of connection}, can have a controlled sign. To achieve this, consider the scalar Hermitian connection $a := i\varphi \id$, where $\varphi$ is a smooth 1-form such that $d\varphi = \omega_g$ (the area form of the metric $g$). We choose a specific $\varphi$ of the form $\varphi = \star d h$ for $h$ a real-valued function satisfying $\star d\star d h = 1$ with Neumann condition $dh(\nu) = 0$ at the boundary. The latter condition implies that $\nabla_{T,sa} u = T u$ for any real $s$. Then we have
\begin{align*}
    a = i (X_\perp h)\ \id. \qquad a_1 = \eta_+ h, \qquad a_{-1} = -\eta_- h = -\overline{a_1},
\end{align*} 
with $\eta_\pm$ defined in \eqref{eq:etapm}, and $i\star F_{a} = - 1$. 

By \cite{PSUGAFA}, we can construct a holomorphic scalar function $w\in C^\infty(SM)$ satisfying $Xw = -i X_\perp h$. Without loss of generality, $w$ can be chosen even. The condition on $w_0$ reads $\eta_- (w_0-h) = 0$, for which it is sufficient to use $w_0 = h$. With this choice of $a$ and $s\in \mathbb{R}$, in what follows, we will denote $G_s:= X + sa + \Phi$ and $G = G_0$. With $w$ as above, we have $G_s u = e^{sw} G (e^{-sw}u)$. Moreover, $\overline{w}$ (the complex-conjugate of $w$) is antiholomorphic and solves $X \overline{w} = + i X_\perp h$, so also $G_s u = e^{-s\overline{w}} G (e^{s\overline{w}}u)$.

Lastly, we will denote $\Pi_\pm$ the projection onto positive and negative harmonics. Namely, $\Pi_\pm u = \sum_{\pm k>0} u_k$. We have the following commutators formulas, for any $u\in C^\infty(SM)$: 
\begin{align*}
    [\Pi_-, X+sa+\Phi] u &= (\eta_- + sa_{-1}) u_0 - (\eta_+ + sa_1) u_{-1}, \\
    [\Pi_+, X+sa+\Phi] u &= (\eta_+ + sa_1) u_0 - (\eta_- + sa_{-1}) u_{1}.
\end{align*}

The following lemma will help us controlling $u$ by versions of $u$ which are conjugated by special integrating factors. 

\begin{Lemma}\label{lem:HIF}
    With the holomorphic function $e^{sw}$ and antiholomorphic function $e^{-s'\bar{w}}$ and any $s, s'\in \mR$, we have 
    \begin{align*}
	\Pi_- u = \Pi_- (e^{-sw} \Pi_- (e^{sw}u))), \qquad \Pi_+ u = \Pi_+ (e^{s'\bar{w}}\Pi_+ (e^{-s'\bar{w}}u)),
    \end{align*} 
    in particular we get the equality
    \begin{align}
	u = u_0 + \Pi_- (e^{-sw} \Pi_- (e^{sw}u))) + \Pi_+ (e^{s'\bar{w}}\Pi_+ (e^{-s'\bar{w}}u)).
	\label{eq:HIF}
    \end{align}
\end{Lemma}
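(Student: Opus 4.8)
The plan is to reduce the statement to the elementary fact that the fiberwise harmonic decomposition behaves well under multiplication: the $k$-th Fourier mode of a product $fg$ on $SM$ is $(fg)_k = \sum_{i+j=k} f_i g_j$, so a product of two holomorphic functions is holomorphic and a product of two antiholomorphic functions is antiholomorphic. From this I first record that $e^{\pm sw}$ is a \emph{holomorphic} smooth scalar function: since $w$ is holomorphic (by construction $w_k=0$ for $k<0$), every power $w^m$ is holomorphic, and the exponential series $\sum_m (\pm sw)^m/m!$ converges in $C^\infty(SM)$, hence in $L^2(SM)$, on which $\Pi_-$ is a bounded projection; thus the limit still has vanishing negative modes. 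Dually $e^{\pm s'\bar w}$ is antiholomorphic, because $\bar w$ is antiholomorphic.

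Next I prove the first identity. Fix $u\in C^\infty(SM,\C^n)$, set $v:=e^{sw}u$, and split $v=\Pi_- v+(\id-\Pi_-)v$. The function $(\id-\Pi_-)v$ has only nonnegative modes, so multiplying it by the holomorphic function $e^{-sw}$ yields again a holomorphic function, whence $\Pi_-\!\big(e^{-sw}(\id-\Pi_-)v\big)=0$. Since $u=e^{-sw}v$, this gives
\[
\Pi_- u=\Pi_-\!\big(e^{-sw}v\big)=\Pi_-\!\big(e^{-sw}\Pi_- v\big)=\Pi_-\!\big(e^{-sw}\Pi_-(e^{sw}u)\big),
\]
which is the first claimed equality. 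The second equality is the mirror image: set $v:=e^{-s'\bar w}u$, split $v=\Pi_+ v+(\id-\Pi_+)v$, note that $(\id-\Pi_+)v$ has only nonpositive modes so its product with the antiholomorphic $e^{s'\bar w}$ stays antiholomorphic and is killed by $\Pi_+$, and use $u=e^{s'\bar w}v$ to conclude $\Pi_+ u=\Pi_+\!\big(e^{s'\bar w}\Pi_+(e^{-s'\bar w}u)\big)$.

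Finally, \eqref{eq:HIF} is obtained by substituting these two expressions into the fiberwise harmonic decomposition $u=u_0+\Pi_- u+\Pi_+ u$. I do not expect a genuine obstacle here; the only step warranting a word of care is the assertion in the first paragraph that $e^{\pm sw}$ remains in the holomorphic subspace, i.e. that this subspace is closed under the relevant limit — which follows at once from the $C^\infty$-convergence of the exponential series on the compact manifold $SM$ together with the $L^2$-continuity of $\Pi_-$ (and similarly for $\Pi_+$ in the antiholomorphic case).
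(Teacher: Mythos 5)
Your proof is correct and follows essentially the same route as the paper: both rest on the convolution formula $(fg)_k=\sum_{i+j=k}f_ig_j$ to show that multiplication by a holomorphic function cannot produce negative harmonics from nonnegative ones, so that $\Pi_-(e^{-sw}\Pi_-(e^{sw}u))=\Pi_-(e^{-sw}e^{sw}u)=\Pi_-u$. The only addition is your explicit check that $e^{\pm sw}$ is itself holomorphic (via convergence of the exponential series), which the paper takes as given from the construction of $w$; this is a harmless and correct supplement.
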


\begin{proof} We only prove $\Pi_- u = \Pi_- (e^{-sw} \Pi_- (e^{sw}u)))$, and the rest is similar. It is enough to notice that for any holomorphic function $f$, the equality $\Pi_- (fu) = \Pi_{-} (f \Pi_- u)$ holds, as this amounts to saying that the negative harmonics of $fu$ do not depend on the non-negative harmonics of $u$. This is immediate since
    \begin{align*}
	(fu)_k = \sum_{p\ge 0} f_p u_{k-p}. 
    \end{align*}
    Then we compute immediately
    \begin{align*}
	\Pi_- (e^{-sw} \Pi_- (e^{sw}u))) = \Pi_- \Pi_- (e^{-sw} e^{sw} u) = \Pi_- u, 
    \end{align*}    
    hence the result.
\end{proof}

\textbf{Outline of proof of Theorem \ref{thm:stabLinear}} At first we are going to assume that
the solution $u$ to the transport problem $Xu+\Phi u=-f$, $u|_{\partial_{-}SM}=0$ is $C^{\infty}$.
If $f$ is supported all the way to the boundary, this may not be the case, as $u$ may fail to be smooth at the glancing $\partial_{0}SM$ because $\tau$ is not smooth at $\partial_{0}SM$. However, there is a standard way to fix this issue and we shall do this at the very end.
For now we will proceed as if $u$ were smooth in $SM$.

The initial transport equation, projected onto the harmonic term of degree $0$, reads
\begin{align*}
    -f = \eta_+ u_{-1} + \eta_- u_1 + \Phi u_0 = (\eta_+ u_{-1} + \Phi u_0/2) + (\eta_- u_1 + \Phi u_0/2),
\end{align*}   
so that, in particular, 
\begin{align}
    \|f\|^2 \le 2 \left( \|\eta_+ u_{-1} + \Phi u_0/2\|^2 + \|\eta_- u_1 + \Phi u_0/2\|^2 \right).
    \label{eq:estf}
\end{align}
The crux is then to find how to bound the quantities on the right by the boundary values of $u$. Using a Pestov identity with a special connection $sa$ defined as above (and its holomorphic integrating factor $e^{sw}$), we show how to control the first term using control over $\Pi_- (e^{sw}u))$ for $s>0$. Similar work can be done, to control the second term using control over $\Pi_+ (e^{-s'\bar{w}}u)$ for $s'<0$. 

We first derive in Sec. \ref{sec:miscids} the identity: 
\begin{align}
    \begin{split}
	\eta_+ u_{-1} + \frac{1}{2} \Phi u_0 &= ((e^{sw}u) (\eta_+ - sa_1) (e^{-sw}))_0 + \frac{1}{2} e^{-sw_0} \Phi (e^{sw}u)_0 \\
	&\qquad + \frac{i}{2} (e^{-sw} G_s V\Pi_- (e^{sw}u))_0 + \frac{i}{2} e^{-sw_0} (G_s V\Pi_- (e^{sw}u))_0.
    \end{split}
    \label{eq:temp}
\end{align}
Since $(\eta_+ - sa_1) (e^{-sw})$ only has strictly positive harmonic terms, the first term in the right-hand side of \eqref{eq:temp} only depends on $\Pi_-(e^{sw}u)$. Upon defining $v_s := \Pi_-(e^{sw}u)$, the identity \eqref{eq:temp} reads
\begin{align}
    \begin{split}
	\eta_+ u_{-1} + \frac{1}{2} \Phi u_0 &= (v_s (\eta_+ - sa_1) (e^{-sw}))_0 + \frac{1}{2} \Phi (e^{s(w-w_0)}u)_0 \\    
	&\qquad\qquad + \frac{i}{2} (e^{-sw} G_s V v_s)_0 + \frac{i}{2} e^{-sw_0} (G_s V v_s)_0.	
    \end{split}
    \label{eq:temp2}    
\end{align}
Denoting $w_\infty = \sup_{SM} |w|$, we straightforwardly obtain the estimate
\begin{align}
    \begin{split}
	\|\eta_+ u_{-1} + \frac{1}{2} \Phi u_0\|^2 &\le C_0 \Big( |w|_{C^1}^2 s e^{2sw_\infty} \|v_s\|^2  \\
	&\qquad\qquad + |\Phi|_{C^0}^2 \|(e^{s(w-w_0)}u)_0\|^2 + e^{2sw_\infty} \|G_s Vv_s\|^2 \Big),	
    \end{split}    
    \label{eq:est1}
\end{align}
and control on $\|\eta_+ u_{-1} + \frac{1}{2} \Phi u_0\|^2$ will be obtained after controlling each term in the last right hand side. We first control $\|(e^{s(w-w_0)}u)_0\|^2$ by $\|v_s\|^2 + \|G_sVv_s\|^2$, via the estimate
\begin{align}
    \|(e^{s(w-w_0)}u)_0\|_{L^2(M)} \le C' e^{2sw_\infty} \left(  \|G_s Vv_s\|_{L^2(M)}^2 + |\Phi|_{C^0}^2 \|v_s\|_{L^2(M)}^2 + \|I_\Phi f\|_{L^2(\partial SM)} \right).
    \label{eq:estu0}
\end{align}
We then control $\|v_s\|^2$ and $\|G_sVv_s\|^2$ by boundary terms via Pestov identity and setting up an appropriate threshold on $s$. To do this, we consider the transport problem for $v_s$, written as:
\begin{align*}
    G_s v_s = G_s (\Pi_- (e^{sw}u)) &= [G_s, \Pi_-] (e^{sw}u) \\ 
    &= (\eta_+ + s a_1) (e^{sw}u)_{-1} - (\eta_- + sa_{-1}) ((e^{sw}u)_{0}) 
\end{align*}
We then use the Pestov identity \eqref{eq:Pestov3} for $v_s$, with $\star F_{sa} = is \id$ and $\star d_{sa} \Phi = X_\perp \Phi$: 
\begin{align}
    \begin{split}
	\frac{1}{\beta} \|G_sVv_s &- rVv_s\|^2 + \frac{\beta-1}{\beta} \|G_sVv_s\|^2 + \|(\eta_+ + s a_1) (e^{sw}u)_{-1}\|^2 \\
	&+ s(v_s, iVv_s) - \Re (\Phi v_s,G_s v_s) - \Re( (X_\perp \Phi)v_s, Vv_s) \\
	&= \Re(T v_s, Vv_s)_{\partial SM} + \Re(\langle v^\perp, \nu \rangle \Phi Vv_s, v_s)_{\partial SM} - \frac{1}{\beta} (\mu\ rVv_s, Vv_s)_{\partial SM}. 
    \end{split}
    \label{eq:Pestov_vs1}
\end{align}
Before choosing $s$ appropriately, we need additional work (tedious as in \cite{PSUGAFA}) on the term $\Re (\Phi v_s,G_s v_s)$. Taking into account boundary terms, and upon defining $B_{\pm 1} := \eta_\pm \Phi$, we prove in Sec. \ref{sec:miscids} that
\begin{align}
    \begin{split}
	\Re (\Phi v_s,G_s v_s) &= \sum_{k=1}^\infty (-1)^k \Big( |\Phi (v_s)_{-k}|^2 - \Re (B_{-1} (v_s)_{-k}, (v_s)_{-k-1}) \\
	& \qquad\qquad\qquad + \Re (e_x(v)\Phi (v_s)_{-k}, (v_s)_{-k-1})_{\partial SM}  \Big),
    \end{split}
    \label{eq:tedious}
\end{align}
with $e_x(v)$ defined in \eqref{eq:exv}. The last term in the sum will move to the right-hand side of \eqref{eq:Pestov_vs1}, while the other two need to be controlled with a large $s$. To achieve this, we prove in Sec. \ref{sec:miscids} the following: 
\begin{Lemma}\label{lem:s}
    There exists a universal constant $C>0$ such that for all $s \ge C |\Phi|_{C^1}$, 
    \begin{align}
	\begin{split}
	    s(v_s, iVv_s) &- \sum_{k=1}^\infty (-1)^k \left( |\Phi (v_s)_{-k}|^2 - \Re (B_{-1} (v_s)_{-k}, (v_s)_{-k-1})\right) \\
	    &- \Re( (\star d_{sa}\Phi)v_s, Vv_s) \ge 0.	    
	\end{split}	
	\label{eq:sbound}
    \end{align}	    
\end{Lemma}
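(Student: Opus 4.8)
The plan is to pass to the fibrewise harmonic decomposition and rephrase \eqref{eq:sbound} as the positivity of a block tridiagonal quadratic form in the harmonic components $x_k:=(v_s)_{-k}\in\Omega_{-k}$, $k\ge 1$, of $v_s=\Pi_-(e^{sw}u)$; note that $(v_s)_j=0$ for $j\ge 0$, so $x_0=0$. The leading term is produced by $-iV$ acting as multiplication by the degree: $iVv_s=\sum_{k\ge1}k\,x_k$, hence $s(v_s,iVv_s)=s\sum_{k\ge1}k\norm{x_k}^2$, which will be the positive ``budget''.

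Next I would rewrite the two remaining groups in mode form. Because the special connection $a=i(X_\perp h)\,\id$ is scalar, the commutator in \eqref{eq:Aquantities} drops out and $\star d_{sa}\Phi=X_\perp\Phi$ (as already noted before \eqref{eq:Pestov_vs1}); writing $X_\perp=-i(\eta_+-\eta_-)$ gives $X_\perp\Phi=-i(B_1-B_{-1})$ with $B_{\pm1}=\eta_\pm\Phi$, and skew--Hermiticity of $\Phi$ yields $B_1^\ast=-B_{-1}$. Expanding $(X_\perp\Phi)v_s$ and $Vv_s=-i\sum_{k\ge1}k\,x_k$ into Fourier modes, pairing matching components, using $B_1^\ast=-B_{-1}$ and re--indexing, one finds $\Re\bigl((\star d_{sa}\Phi)v_s,Vv_s\bigr)=-\sum_{k\ge1}(2k+1)\,\Re\bigl(B_{-1}x_k,x_{k+1}\bigr)$, i.e.\ exactly the off--diagonal pairings already appearing in \eqref{eq:tedious}. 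Feeding this, together with the interior part of \eqref{eq:tedious} (its boundary term does not enter \eqref{eq:sbound}), into the left--hand side of \eqref{eq:sbound} reduces the claim to
\[
 \mathcal E:=\sum_{k\ge1}\Bigl(s\,k\,\norm{x_k}^2-(-1)^k\norm{\Phi x_k}^2+\bigl(2k+1+(-1)^k\bigr)\,\Re\bigl(B_{-1}x_k,x_{k+1}\bigr)\Bigr)\ \ge\ 0.
\]

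Finally I would bound the last two groups against the first. Skew--Hermiticity gives $\norm{\Phi x_k}\le\abs{\Phi}_{C^0}\norm{x_k}$ (since $\Phi^\ast\Phi\le\abs{\Phi}_{C^0}^2\,\id$), while $B_{-1}=\eta_-\Phi$ is first order in $\Phi$, so $\abs{B_{-1}}_{C^0}\lesssim\abs{\Phi}_{C^1}$. Applying $|\Re(B_{-1}x_k,x_{k+1})|\le\tfrac12\abs{B_{-1}}_{C^0}(\norm{x_k}^2+\norm{x_{k+1}}^2)$ termwise and then shifting $k\mapsto k-1$ in the piece carrying $\norm{x_{k+1}}^2$ (this is precisely where $x_0=0$ and $k-\tfrac12\le k$ create the slack needed to swallow the linearly growing coupling coefficients $2k+1+(-1)^k$), the total coupling contribution is bounded below by $-C\abs{\Phi}_{C^1}\sum_{k\ge1}k\norm{x_k}^2$; combining with $-(-1)^k\norm{\Phi x_k}^2\ge-\abs{\Phi}_{C^0}^2\,k\norm{x_k}^2$ (using $k\ge1$) yields $\mathcal E\ge\bigl(s-C\abs{\Phi}_{C^1}-\abs{\Phi}_{C^0}^2\bigr)\sum_k k\norm{x_k}^2$, which is nonnegative once $s$ exceeds a fixed multiple of $\abs{\Phi}_{C^1}$ once the diagonal terms $\norm{\Phi x_k}^2$ are handled more carefully than by the crude $\abs{\Phi}_{C^0}^2$ bound — namely by substituting the mode--$(-k)$ projection $\Phi x_k=-(\eta_++sa_1)x_{k+1}-(\eta_-+sa_{-1})x_{k-1}$ of the transport equation $G_s(e^{sw}u)=-e^{sw}f$, integrating by parts, and absorbing the resulting derivative terms into the $s\sum_k k\norm{x_k}^2$ budget. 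The main obstacle is exactly this bookkeeping: one must reconcile the couplings (whose coefficients grow like $k$) and the zeroth--order terms $\norm{\Phi x_k}^2$ with the leading term while preserving the \emph{linear} (not quadratic) dependence of the threshold on $\abs{\Phi}_{C^1}$, which is what ultimately produces the factor $e^{C_2\norm{\Phi}_{C^1}}$ rather than $e^{C_2\norm{\Phi}_{C^1}^2}$ in Theorem \ref{thm:stabLinear}.
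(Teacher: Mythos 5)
Your overall strategy is the same as the paper's: treat $s(v_s,iVv_s)=s\sum_{k\ge 1}k\,|(v_s)_{-k}|^2$ as the positive budget and absorb the remaining two groups mode by mode via Cauchy--Schwarz, using $(v_s)_0=0$ to handle the index shifts. Your handling of the connection term is in fact slightly sharper than the paper's: you derive the exact identity expressing $\Re((\star d_{sa}\Phi)v_s,Vv_s)$ as $\sum_k (2k+1)\Re(B_{-1}(v_s)_{-k},(v_s)_{-k-1})$ up to sign (using $B_1^{*}=-B_{-1}$ and re-indexing), whereas the paper only writes the mode expansion $\sum_{k<0}k(B_1(v_s)_{k-1}-B_{-1}(v_s)_{k+1},(v_s)_k)$ and bounds its modulus by $C_2|\Phi|_{C^1}\sum_{k<0}|k|\,|(v_s)_k|^2$. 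Either way the off-diagonal couplings, whose coefficients grow like $k$, are absorbed with a threshold linear in $|\Phi|_{C^1}$, exactly as you do.

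The gap is the diagonal term $\sum_{k\ge 1}(-1)^k|\Phi(v_s)_{-k}|^2$. The only estimate you actually carry out is $|\Phi(v_s)_{-k}|^2\le |\Phi|_{C^0}^2|(v_s)_{-k}|^2$, which yields nonnegativity only for $s\gtrsim |\Phi|_{C^1}+|\Phi|_{C^0}^2$, not for the claimed $s\ge C|\Phi|_{C^1}$; as you observe, a quadratic threshold would propagate to $e^{C_2\|\Phi\|_{C^1}^2}$ in Theorem \ref{thm:stabLinear}, so the linearity is not cosmetic. Your proposed repair --- substituting the mode-$(-k)$ projection of the transport equation for $\Phi(v_s)_{-k}$, integrating by parts, and absorbing the resulting derivative terms --- is left entirely unexecuted, and it is not the route the paper takes: the paper disposes of the whole alternating sum in one line, asserting $\sum_{k=1}^\infty(-1)^k(|\Phi(v_s)_{-k}|^2-\Re(B_{-1}(v_s)_{-k},(v_s)_{-k-1}))\le C_1|\Phi|_{C^1}\sum_{k<0}|(v_s)_k|^2$ with $C_1$ universal, presumably exploiting the alternating (telescoping) structure coming from \eqref{eq:tedious} rather than a termwise pointwise bound. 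So as written your proposal proves the lemma only with a quadratic threshold; the step needed to reach the stated linear one is precisely the step you defer, and it cannot be recovered from the crude bound $|\Phi|_{C^0}^2$. You have correctly isolated the one delicate point of the lemma, but you have not closed it.
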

In particular, for $s = C |\Phi|_{C^1}+1$, identity \eqref{eq:Pestov_vs1} becomes
\begin{align}
    \begin{split}
	\frac{1}{\beta} \|G_sVv_s&-rVv_s\|^2 + \frac{\beta-1}{\beta} \|G_sVv_s\|^2 + \|(\eta_+ + s a_1) (e^{sw}u)_{-1}\|^2 + \sum_{k=1}^\infty k |v_{-k}|^2 \\
	&\le \Re(T v_s, Vv_s)_{\partial SM} + \Re(\mu_\perp\ \Phi Vv_s, v_s)_{\partial SM} \\
	&\qquad- \frac{1}{\beta} (\mu\ rVv_s, Vv_s)_{\partial SM} - \sum_{k=1}^\infty (-1)^k \Re (e_x(v)\Phi (v_s)_{-k}, (v_s)_{-k-1})_{\partial SM}
    \end{split}
    \label{eq:partial}
\end{align}
We now explain how to bound the right-hand side in terms of $\|I_\Phi f\|^2_{H^1(\partial_+ SM)}$. Recall that $v_s = \Pi_- (e^{sw}u)$. The first claim is that $[\Pi_-,V] = [\Pi_-,T] = 0$. The first one is obvious because both operators are diagonal of the fiberwise Fourier decomposition $C^\infty(\partial SM) = \bigoplus_{k\in \mathbb{Z}} \ker (id - ikV)$. That $T$ is also diagonal on this decomposition follows from the fact that $[T,V] = 0$. With this in mind, we have, on $\partial SM$:  
\begin{align*}
    V v_s = \Pi_- V(e^{sw} u) = \Pi_-  e^{sw} (s (Vw) u + Vu), \qquad T v_s = \Pi_-  e^{sw} (s (Tw) u + Tu),
\end{align*}
and since $u|_{\partial_-SM} = 0$, $Vu$ and $Tu$ will be controlled by $\|I_\Phi f\|_{H^1(\partial_+ SM)}$. The right hand side of \eqref{eq:partial} is thus bounded by $C' (s^2+s|\Phi|_{C^0}+1) e^{2sw_\infty} \|I_\Phi f\|^2_{H^1(\partial_+ SM)}$, where the constant $C'$ does not depend on $\Phi$.



Using this bound and throwing out the first and third terms of the left-hand side of \eqref{eq:partial}, we obtain
\begin{align*}
    \frac{\beta-1}{\beta} \|G_sVv_s\|^2 + \sum_{k=1}^\infty k |v_{-k}|^2 \le C' (s^2+s|\Phi|_{C^0}+1) e^{2sw_\infty} \|I_\Phi f\|^2_{H^1(\partial_+ SM)}.
\end{align*}
The second term in the left-hand side controls $\|v_s\|_{L^2}$ directly, and we can write
\begin{align}
    (\beta-1) \|G_sVv_s\|^2 + \|v_s\|^2 \le C' (s^2+s|\Phi|_{C^0}+1) e^{2sw_\infty} \|I_\Phi f\|^2_{H^1(\partial_+ SM)},
    \label{eq:estGvs}
\end{align}
with $C'$ some constant independent of $\Phi$. Recalling that $s = C |\Phi|_{C^1}+1$ and combining estimates \eqref{eq:estf}, \eqref{eq:est1}, \eqref{eq:estu0} and \eqref{eq:estGvs}, we arrive at estimate \eqref{eq:stabLinear}, completing the proof of Theorem \ref{thm:stabLinear}.

\subsubsection{Remaining ingredients}

\noindent {\bf Pestov identity with boundary term for ray transforms with skew-hermitian pairs}

Let $A$ and $\Phi$ a skew-hermitian pair, and define 
\begin{align*}
    G:= X+A+\Phi, \qquad G_\perp := X_\perp - A_V, \qquad \text{where } A_V := V(A).    
\end{align*}
We have the following structure equations
\begin{align}
    [G,V] = G_\perp, \qquad [V,G_\perp] = G-\Phi,\qquad [G,G_\perp] = -\kappa V - \star F_A - \star d_A\Phi, 
    \label{eq:structure}
\end{align}
where $\star d_A \Phi = X_\perp \Phi + \Phi A_V-A_V \Phi$, or when the connection $A$ is scalar, $\star d_A \Phi = X_\perp \Phi$, where $\kappa(x)$ is the Gaussian curvature. In what follows, we will need to integrate by parts with boundary terms, and using \eqref{eq:IBP}, we obtain for $G$:
\begin{align*}
    (Gu,v) = -(u,Gv) + (\mu u, v)_{\partial SM}.
\end{align*}

\begin{proof}[Proof of Theorem \ref{thm:pestovid}] We first write a differential identity using the structure equations \eqref{eq:structure}:
    \begin{align*}
	GVVG - VGGV &= GV[V,G] + [G,V]GV \\
	&= -GVG_\perp + G_\perp VG \\
	&= - G[V,G_\perp] + [G_\perp,G] V \\
	&= - G^2 + G\Phi + \kappa V^2 + \star F_A V + (\star d_A \Phi) V,
    \end{align*}
    where $G\Phi f := G(\Phi f)$. We record this here as 
    \begin{align}
	[GV,VG] = - G^2 + G\Phi + \kappa V^2 + \star F_A V + (\star d_A \Phi) V.
	\label{eq:Pestovdiff}
    \end{align}
    Now, considering $u$ smooth and supported up the boundary, we write
    \begin{align*}
	\|VGu\|^2 - \|GVu\|^2 &= (VGu,VGu) - (GVu,GVu) \\
	&= - (VVGu,Gu) + (GGVu,Vu) - (GVu, \mu\ Vu)_{\partial SM} \\
	&= ([GV,VG]u,u) - (VVGu, \mu\ u)_{\partial SM} - (GVu, \mu\ Vu)_{\partial SM} \\
	&= \|Gu\|^2 -(Gu,\mu\ u)_{\partial SM} - (\Phi u,Gu) + (\mu  \Phi u, u)_{\partial SM}\\
	&\qquad + (\kappa V^2u,u) + (\star F_A Vu,u) + ((\star d_A \Phi) Vu,u)\\
	&\qquad - (VVGu,\mu\ u)_{\partial SM} - (GVu, \mu\ Vu)_{\partial SM}
    \end{align*}    
    We now arrange the four boundary terms using integration by parts in $V$ and the formulas
    \begin{align*}
	V\mu = \langle v^\perp,\nu\rangle = \mu_\perp, \qquad V^2 \mu = V\mu_\perp =  - \mu.
    \end{align*}
    First notice that 
    \begin{align*}
	(VVGu,\mu\ u)_{\partial SM} &= - (VGu, (V\mu) u)_{\partial SM} - (VGu, \mu\ Vu)_{\partial SM}   \\
	&= - (VGu, \mu_\perp u)_{\partial SM} - (VGu, \mu\ Vu)_{\partial SM} \\ 
	&= - (Gu, \mu\ u)_{\partial SM} + (Gu, \mu_\perp Vu)_{\partial SM} - (VGu, \mu\ Vu)_{\partial SM}.
    \end{align*}
    We then obtain
    \begin{align*}
	(Gu,\mu\ u)_{\partial SM} &+ (VVGu, \mu\ u)_{\partial SM} + (GVu, \mu\ Vu)_{\partial SM} - (\mu\ \Phi u, u)_{\partial SM} \\
	&= (Gu, \mu_\perp\ Vu)_{\partial SM} - (VGu, \mu\ Vu)_{\partial SM}\\
	&\qquad + (GVu, \mu\ Vu)_{\partial SM} - (\mu\ \Phi u, u)_{\partial SM} \\
	&= (\mu_\perp\ Gu + \mu\ G_\perp u, Vu)_{\partial SM}- (\mu\ \Phi u, u)_{\partial SM}.
    \end{align*}
    We now simplify, using that $V(A)(x,v) = A(x,v^\perp)$ and $\mu_\perp X + \mu X_\perp = T$, 
    \begin{align*}
	\mu_\perp\ Gu + \mu\ G_\perp u = Tu + A(x,\nu^\perp) u + \mu_\perp \Phi u =: \nabla_{T,A} u + \mu_\perp \Phi u.
    \end{align*}
    The boundary terms then simplify into 
    \begin{align*}
	(\mu_\perp\ Gu &+ \mu\ G_\perp u, Vu)_{\partial SM}- (\mu\Phi u, u)_{\partial SM} \\
	&= (\nabla_{T,A}u, Vu)_{\partial SM} + (\mu_\perp\ \Phi u, Vu)_{\partial SM} - (\mu\ \Phi u, u)_{\partial SM} \\
	&= (\nabla_{T,A}u, Vu)_{\partial SM} + (\mu_\perp \Phi Vu, u)_{\partial SM}. 
    \end{align*}
    With this notation, the full Pestov identity takes the form
    \begin{align}
	\begin{split}
	    \|GVu\|^2 &- (Vu,\kappa Vu) + \|Gu\|^2 - \|VG u\|^2 - (\Phi u,Gu) \\
	    & + (\star F_A Vu, u) + ( (\star d_A\Phi)Vu,u) = (\nabla_{T,A}u, Vu)_{\partial SM} + (\mu_\perp\Phi Vu, u). 
	\end{split}
	\label{eq:Pestov}
    \end{align}
    To recover \cite[Eq. (8)]{PSUGAFA}, we take the real part of the equality above, and notice that $(\star F_A Vu, u) = -(\star F_A u, Vu)$ because $V(\star F_A) =0$; then 
    \begin{align*}
	((\star d_A \Phi) Vu, u) &= (V ((\star d_A \Phi)u), u) - (V (\star d_A \Phi) u, u) \\
	&= - ((\star d_A \Phi)u, Vu) - ( (d_A \Phi) u, u). 
    \end{align*}
    Since the last term is purely imaginary, the real parts of the other terms agree, and upon taking the real part of \eqref{eq:Pestov}, we obtain
    \begin{align}
	\begin{split}
	    \|GVu\|^2 &- (Vu,\kappa Vu) + \|Gu\|^2 - \|VG u\|^2 - \Re (\Phi u,Gu) \\
	    & - (\star F_A u, Vu) - \Re( (\star d_A\Phi)u, Vu) = \Re(\nabla_{T,A}u, Vu)_{\partial SM} + \Re( \mu_\perp \Phi Vu, u)_{\partial SM}. 
	\end{split}
	\label{eq:Pestov2}
    \end{align}
    (Note that the second boundary term is purely real so the $\Re$ is just ornamental)
    
    We finally explain how the index form term $\|GVu\|^2 - (Vu, \kappa Vu)$ can be rewritten as the sum of a non-negative term and a boundary term. With $\beta_\text{Ter}$ as in the statement, and the function $r = r_\beta\colon SM\to \mR$ solving $Xr + r^2 + \beta\kappa =0$, we now compute, for any $\psi\in C^\infty(SM,\C^n)$
    \begin{align*}
	\|G\psi - r\psi\|^2 = \|G\psi\|^2 - (G\psi, r\psi) - (r\psi, G\psi) + \|r\psi\|^2.
    \end{align*}
    We simplify
    \begin{align*}
	(G\psi,r\psi) + (r\psi,G\psi) &= (X\psi, r\psi) + (r\psi, X\psi) \\
	&= \int_{SM} (X\psi) r\bar{\psi} + r\psi (X\bar{\psi}) \\
	&= \int_{SM} X (\psi r\bar{\psi}) - (Xr) \psi\bar{\psi} \\
	&= (\mu\ r\psi, \psi)_{\partial SM} + \int_{SM} (r^2 + \beta\kappa) \psi \bar{\psi}.
    \end{align*} 
    We arrive at 
    \begin{align*}
	\|G\psi - r\psi\|^2 = \|G\psi\|^2 - (\mu r\psi, \psi)_{\partial SM} - \beta (\kappa \psi,\psi),
    \end{align*}
    and we may rearrange this as 
    \begin{align*}
	\beta (\|G\psi\|^2 - (\kappa\psi,\psi)) = \|G\psi-r\psi\|^2 + (\beta-1) \|G\psi\|^2 + (\mu\ r\psi, \psi)_{\partial SM}.
    \end{align*}
    Plugging this last relation into \eqref{eq:Pestov2} with $\psi = Vu$ yields \eqref{eq:Pestov3}.
\end{proof}

\noindent {\bf Remaining estimates and lemmata} \label{sec:miscids}

\begin{proof}[Proof of equality \eqref{eq:temp}]
    We write, using Lemma \ref{lem:HIF}
    \begin{align*}
	\eta_+ u_{-1} &= \eta_+ (e^{-sw} \Pi_- (e^{sw}u))_{-1} \\
	&= \eta_+ \left[ \sum_{k=0} (e^{-sw})_{2k} (e^{sw}u)_{-1-2k} \right] \\
	&= \sum_{k=0}^{\infty} \left( ( (\eta_+-sa_1) (e^{-sw})_{2k}) (e^{sw}u)_{-1-2k} + (e^{-sw})_{2k} (\eta_++sa_1) (e^{sw}u)_{-1-2k} \right) \\
	&= ((e^{sw}u) (\eta_+ - sa_1) (e^{-sw}))_0 + \sum_{k=0}^\infty (e^{-sw})_{2k} (\eta_++sa_1) (e^{sw}u)_{-1-2k}.
    \end{align*}
    To rewrite the last term, from the equation $G_s (e^{sw}u) = -e^{sw}f$, note the relation
    \begin{align*}
	(\eta_++sa_1) (e^{sw}u)_{-1-2k} + (\eta_{-}+sa_{-1}) (e^{sw}u)_{1-2k} + \Phi (e^{sw}u)_{-2k} = 0.
    \end{align*}
    Then we have, for $k>0$,
    \begin{align*}
	(G_s V \Pi_- (e^{sw}u))_{-2k} &= \underbrace{V (G_s \Pi_- (e^{sw}u))_{-2k}}_{=0} + ([G_s, V] \Pi_- (e^{sw}u))_{-2k} \\
	&= -i (\eta_+ + sa_1) (e^{sw}u)_{-1-2k} + i (\eta_{-}+sa_{-1}) (e^{sw}u)_{1-2k} \\
	&= -2i (\eta_+ + sa_1) (e^{sw}u)_{-1-2k} -i \Phi (e^{sw}u)_{-2k},
    \end{align*}
    where we used the transport equation in the last line. For $k=0$, 
    \begin{align*}
	(G_s V \Pi_- (e^{sw}u))_{0} = -i (\eta_+ + s a_1) (e^{sw}u)_{-1}.
    \end{align*}
    Plugging this back into the equation for $\eta_+ u_{-1}$, we get 
    \begin{align*}
	\eta_+ u_{-1} &= ((e^{sw}u) (\eta_+ - sa_1) (e^{-sw}))_0 + (e^{-sw})_0 i (G_s V\Pi_- (e^{sw}u))_{0} \\
	& \qquad + \sum_{k>0} (e^{-sw})_{2k} \left( \frac{i}{2} (G_s V\Pi_- (e^{sw}u))_{-2k} - \frac{1}{2} \Phi (e^{sw}u)_{-2k}\right) .
    \end{align*}
    We now write
    \begin{align*}
	\sum_{k>0} (e^{-sw})_{2k}\Phi (e^{sw}u)_{-2k} &= \sum_{k=0}^\infty (e^{-sw})_{2k}\Phi (e^{sw}u)_{-2k} - e^{-sw_0} \Phi (e^{sw}u)_{0} \\
	&= \Phi u_0 - e^{-sw_0} \Phi (e^{sw}u)_{0}
    \end{align*}
    and similarly 
    \begin{align*}
	\sum_{k>0} (e^{-sw})_{2k} (G_s V\Pi_- (e^{sw}u))_{-2k} = (e^{-sw} G_s V\Pi_- (e^{sw}u))_0 - e^{-sw_0} (G_s V\Pi_- (e^{sw}u))_{0}.
    \end{align*}
    Using the last two computations, we arrive at \eqref{eq:temp}.    
\end{proof}

\begin{proof}[Proof of estimate \eqref{eq:estu0}] The transport equation for $e^{sw}u$ projected onto the harmonic term of degree $-1$ reads: 
    \begin{align*}
	(\eta_- + sa_{-1}) (e^{sw}u)_0 = - (\eta_+ + sa_1) (v_s)_{-2} - \Phi (v_s)_{-1}. 
    \end{align*}
    For our choice of connection, $a_{-1} = -\eta_- w_0$ so the left side can be rewritten as 
    \begin{align*}
	(\eta_- + sa_{-1}) (e^{sw}u)_0  = e^{sw_0} \eta_- (e^{-sw_0} (e^{sw}u)_0) = e^{sw_0} \eta_- (e^{s(w-w_0)}u)_0,
    \end{align*}
    hence we obtain 
    \begin{align*}
	\eta_- (e^{s(w-w_0)}u)_0 = - e^{-sw_0} (\eta_+ + sa_1) (v_s)_{-2} - e^{-sw_0} \Phi (v_s)_{-1}.
    \end{align*}
    We then rewrite the latter right-hand side in terms of $G_sV v_s$. Notice that 
    \begin{align*}
	(G_s V v_s)_{-1} &= (\eta_+ + sa_1) (Vv_s)_{-2} + \Phi (V v_s)_{-1} \\
	&= -2i (\eta_+ + sa_1) (v_s)_{-2} - i \Phi (v_s)_{-1},
    \end{align*} 
    so 
    \begin{align*}
	-(\eta_+ + sa_1) (v_s)_{-2} = -\frac{i}{2} (G_s V v_s)_{-1} + \frac{1}{2} \Phi (v_s)_{-1},
    \end{align*}
    and thus 
    \begin{align*}
	\eta_- (e^{s(w-w_0)}u)_0 = - \frac{e^{-sw_0}}{2} \left( (G_s V v_s)_{-1} + \Phi (v_s)_{-1} \right)
    \end{align*}
        Upon deriving an estimate of the form 
    \begin{align*}
	\|f\|_{L^2(M)} \le C (\|\eta_- f\|_{L^2(M)} + \|f|_{\partial M}\|_{L^2(\partial M)}), 
    \end{align*}
    we can write 
    \begin{align*}
	\|(e^{s(w-w_0)}u)_0\|_{L^2(M)} &\lesssim \|\eta_- (e^{s(w-w_0)}u)_0\|_{L^2(M)} + \|(e^{s(w-w_0)}u)_0|_{\partial M}\|_{L^2(\partial M)} \\
	&\lesssim \frac{1}{2} \|e^{-sw_0} ((i G_s Vv_s)_{-1}+\Phi (v_{s})_{-1} \|_{L^2(M)} \\
	& \qquad\qquad+ \|(e^{s(w-w_0)}u)_0)|_{\partial M}\|_{L^2(\partial M)} ,
    \end{align*}
    and \eqref{eq:estu0} follows.     
\end{proof}

\begin{proof}[Proof of \eqref{eq:tedious}] We first need to write an integration by parts for $\mu_\pm$ defined in \eqref{eq:etapm}. Using integrations by parts \eqref{eq:IBP} we first derive an integration by parts for $X_\perp = XV-VX$: for any $u,w$ smooth on $SM$,
    \begin{align*}
	(X_\perp u, w) + (u,X_\perp w) &= (XV u, w) - (VXu, w) + (u,XVw) - (u,VXw) \\
	&= (XVu,w) + (Vu,Xw) + (Xu,w) + (u,XVw) \\
	&= (\mu Vu, w)_{\partial SM} + (\mu u, Vw)_{\partial SM} \\
	&= - ( (V\mu) u, w)_{\partial SM}  = -(\mu_\perp u,w )_{\partial SM}
    \end{align*}
    We now compute, using that $\mu_+^* = -\mu_-$ 
    \begin{align*}
	(u, \eta_+ w) + (\eta_- u, w) &= \frac{1}{2} \left( (u, (X+iX_\perp)w) + (X-iX_\perp u,w) \right) \\
	&=  \frac{1}{2} \left( (\mu + i\mu_\perp) u, w    \right)_{\partial SM} = ( e_x(v) u, w)_{\partial SM}
    \end{align*} 
    where we define
    \begin{align}
	e_x(v) := \frac{1}{2} (\mu(x,v) + i \mu_\perp(x,v)).
	\label{eq:exv}
    \end{align}
    Similarly, for the skew-hermitian connection considered, 
    \begin{align*}
	(u, (\eta_+ + sa_1) w) + ((\eta_- + sa_{-1})u, w) = ( e_x(v) u, w)_{\partial SM}.
    \end{align*}    
    Now, using the fact that 
    \begin{align*}
	(G_s v_s)_{-1} = (\eta_{-} + sa_{-1}) (e^{sw}u)_0 = -(\eta_+ + sa_1)(v_s)_{-2} - \Phi (v_s)_{-1},
    \end{align*}
    we compute
    \begin{align*}
	\Re (\Phi v_s,G_s v_s) &= \Re(\Phi (v_s)_{-1}, (G_s v_s)_{-1}) \\
	&= \Re(\Phi (v_s)_{-1}, -(\eta_+ + sa_1)(v_s)_{-2}) - |\Phi (v_s)_{-1}|^2 \\
	&= \Re ( (\eta_-+sa_{-1})(\Phi (v_s)_{-1}), (v_s)_{-2} )  \\
	& \qquad\qquad\qquad - \Re(e_x(v)\Phi(v_s)_{-1}, (v_s)_{-2} )_{\partial SM} - |\Phi (v_s)_{-1}|^2 \\
	&= - |\Phi (v_s)_{-1}|^2 + \Re (b_{-1} (v_s)_{-1}, (v_s)_{-2})  \\
	& \qquad\qquad\qquad - \Re(e_x(v)\Phi(v_s)_{-1}, (v_s)_{-2} )_{\partial SM} + p_1,
    \end{align*}
    where $p_1:= \Re (\Phi (\eta_- + sa_{-1}) (v_s)_{-1}, (v_s)_{-2})$. Upon defining 
    \begin{align}
	p_n:= \Re (\Phi (\eta_- + sa_{-1}) (v_s)_{-n}, (v_s)_{-n-1}), \qquad n\ge 1,
	\label{eq:pn}
    \end{align}
    we now prove by induction the following claim: 
    \begin{align}
	\begin{split}
	    \Re (\Phi v_s,G_s v_s) &= \sum_{k=1}^n (-1)^k \Big( |\Phi (v_s)_{-k}|^2 - \Re (b_{-1} (v_s)_{-k}, (v_s)_{-k-1}) \\
	    &\qquad \qquad + \Re (e_x(v)\Phi (v_s)_{-k}, (v_s)_{-k-1})_{\partial SM}  \Big) + (-1)^{n+1} p_n.	
	\end{split}    
	\label{eq:secondterm}
    \end{align}
    The case $n=1$ is proved above, and the induction step $(n\implies n+1)$ follows from the calculation
    \begin{align*}
	p_n &= \Re (\Phi (\eta_- + sa_{-1}) (v_s)_{-n}, (v_s)_{-n-1}) \\
	&= - \Re (\Phi(v_s)_{-n-1}, (\eta_- + sa_{-1}) (v_s)_{-n}) \\
	&= \Re (\Phi(v_s)_{-n-1}, (\eta_+ + sa_1) (v_s)_{-n-2} + \Phi (v_s)_{-n-1} ) \\
	&= |\Phi(v_s)_{-n-1}| + \Re (\Phi(v_s)_{-n-1}, (\eta_+ + sa_1) (v_s)_{-n-2})_{\partial SM} \\
	&= |\Phi(v_s)_{-n-1}| - \Re ( (\eta_- + sa_{-1}) (\Phi(v_s)_{-n-1}),  (v_s)_{-n-2}) \\
	& \qquad\qquad\qquad + \Re (e_x(v) \Phi(v_s)_{-n-1}, (v_s)_{-n-2} )_{\partial SM} \\
	&= |\Phi(v_s)_{-n-1}| - \Re (b_{-1} (\Phi(v_s)_{-n-1}),  (v_s)_{-n-2}) \\
	& \qquad\qquad\qquad + \Re (e_x(v) \Phi(v_s)_{-n-1}, (v_s)_{-n-2} )_{\partial SM} - p_{n+1}.
    \end{align*}
    Putting this equality back into \eqref{eq:secondterm} proves the induction. Now since $v_s\in H^1(SM)$, we have that $\lim_{n\to \infty} p_n = 0$, and thus \eqref{eq:tedious} follows.
\end{proof}

\begin{proof}[Proof of Lemma \ref{lem:s}] The term that ultimately controls everything is 
    \begin{align*}
	s(v_s, iVv_s) = s\sum_{k<0} |k| |(v_s)_k|^2.
    \end{align*}
    The infinite sum in \eqref{eq:sbound} can then be controlled by 
    \begin{align*}
	\sum_{k=1}^\infty (-1)^k \left( |\Phi (v_s)_{-k}|^2 - \Re (B_{-1} (v_s)_{-k}, (v_s)_{-k-1})\right) \le C_1 |\Phi|_{C^1} \sum_{k<0} |(v_s)_k|^2,
    \end{align*}
    with $C_1$ a universal constant. As for the last term of the left-hand side of \eqref{eq:sbound}, we write
    \begin{align*}
	((X_\perp\Phi)v_s, Vv_s) &= ((-iB_1 + i B_{-1}) v_s, Vv_s)\\
	&= (B_1 v_s - B_{-1}v_s, iV v_s) \\
	&= \sum_{k<0} k(B_1 (v_s)_{k-1} - B_{-1} (v_s)_{k+1}, (v_s)_k) \\
	|((X_\perp\Phi)v_s, Vv_s)| &\le C_2 |\Phi|_{C^1} \sum_{k<0} |k| |(v_s)_k|^2,
    \end{align*}
    where $C_2$ is a universal constant. Lemma \ref{lem:s} follows upon taking $C = C_1+C_2$.
\end{proof}

\subsubsection{Conclusion: dealing with the glancing} Consider a function $\rho\in C^{\infty}(M)$ such that it coincides with $M\ni x\mapsto d(x,\partial M)$ in a neighbourhood of $\partial M$ and such that $\rho\geq 0$ and $\partial M=\rho^{-1}(0)$.
Clearly $\nabla\rho(x)=-\nu(x)$ for $x\in \partial M$. Using $\rho$, we extend $\nu$ to the interior of $M$ as $\nu(x)=-\nabla\rho(x)$ for $x\in M$. We let $\mu(x,v):=\langle v,\nu(x)\rangle$ and
\[T:=V(\mu)X+\mu X_{\perp}.\]
Note that $T$ is now defined on all $SM$ and agrees with the vector field $T$ defined previously on $\partial SM$. In fact $T$ and $V$ are tangent to every $\partial SM_{\varepsilon}=\{(x,v)\in SM:\;\;x\in \rho^{-1}(\varepsilon)\}$,
where $M_{\varepsilon}=\rho^{-1}(-\infty,\varepsilon]$. The next lemma for $\tau$ is the key input to deal with the glancing, cf. \cite[Lemma 4.1.3]{Sharafutdinov}, \cite[Lemma 3.2.3]{Sharafut99} and \cite[Lemma 5.1]{DPSU07}.

\begin{Lemma} The functions $V\tau$ and $T\tau$ are bounded on $SM\setminus\partial_{0}SM$.
\label{lemma:tau}
\end{Lemma}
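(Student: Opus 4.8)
The statement is the two-dimensional analogue of \cite[Lemma 4.1.3]{Sharafutdinov}, \cite[Lemma 3.2.3]{Sharafut99} and \cite[Lemma 5.1]{DPSU07}, and the plan is to follow those proofs. The heuristic is that, although $\tau$ is not $C^1$ transversally to $\partial SM$ at the glancing set, the directions $T$ and $V$ are tangent to \emph{every} level hypersurface $\partial SM_\varepsilon$, and $\tau$ stays Lipschitz along them. First I would record two soft facts: $\tau$ is continuous on all of $SM$ (with $\tau\equiv 0$ on $\partial_-SM\supset\partial_0SM$; continuity at $\partial_0SM$ is forced by strict convexity, which makes a geodesic tangent to $\partial M$ leave $M$ instantly), and $\tau$ is smooth on $SM\setminus\partial_0SM$ by the implicit function theorem applied to $\rho(\varphi_t(x,v))=0$, since $X\rho=-\mu\neq 0$ at any non-tangential exit point. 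As $SM$ is compact, it then suffices to bound $V\tau$ and $T\tau$ on a small neighbourhood $U$ of $\partial_0SM$. Strict convexity gives $X^2\rho<0$ on $\partial_0SM$, so after shrinking $U$ we may assume $X^2\rho\le -c<0$ on a slightly larger set $U'\supset U$, that $\tau$ is as small as we like on $U$, and (by continuity of the flow) that the segment $\{\varphi_t(x,v):0\le t\le\tau(x,v)\}$ stays in $U'$ whenever $(x,v)\in U$.

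Differentiating $\rho(\varphi_{\tau(x,v)}(x,v))=0$ along $W\in\{T,V\}$ then yields
\[
W\tau \;=\; -\,\frac{\big(W[\rho\circ\varphi_t]\big)(x,v)\big|_{t=\tau(x,v)}}{(X\rho)(\varphi_{\tau(x,v)}(x,v))},
\]
where in the numerator $\rho\circ\varphi_t$ is differentiated on $SM$ with $t$ frozen. Since $\rho\circ\varphi_t=\rho+\int_0^t (X\rho)\circ\varphi_s\,ds$ and $V\rho=T\rho=0$ on $SM$ (tangency of $T,V$ to the level sets of $\rho$, as noted before the lemma), the numerator equals $\int_0^{\tau(x,v)}W[(X\rho)\circ\varphi_s](x,v)\,ds$, whose modulus is $\le C\,\tau(x,v)$ because the integrand is a fixed smooth family of functions, uniformly bounded on $U'$ for $s$ in the relevant small range. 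The denominator is $(X\rho)(\varphi_\tau(x,v))=-\mu$ evaluated at the exit point; it is negative, and its modulus degenerates precisely as $(x,v)\to\partial_0SM$.

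The main obstacle is the matching lower bound on the denominator: one must show $|(X\rho)(\varphi_\tau(x,v))|\ge c'\,\tau(x,v)$ on $U$, and this is where the geometry is used. Put $h(t)=\rho(\varphi_t(x,v))$; then $h\ge 0$ on $[0,\tau]$, $h(\tau)=0$, $h(0)=\rho(x)\ge 0$, and $h''=X^2\rho\circ\varphi_t\le -c$ on $[0,\tau]$, so $h$ is strictly concave and $h'(\tau)\le 0$. If $h'(0)<0$ then $-h'(\tau)\ge\int_0^\tau|h''|\ge c\,\tau$ directly. If $h'(0)\ge 0$, a Taylor expansion of $h$ at $t=0$ against $h(\tau)=0$ and $h(0)\ge 0$ forces $h'(0)\le\tfrac12|h''(\xi)|\,\tau$ for some $\xi\in(0,\tau)$, so $-h'(\tau)=-h'(0)+\int_0^\tau|h''|\ge\big(\int_0^\tau|h''|\big)-\tfrac12|h''(\xi)|\,\tau$; since $\tau$ is small on $U$ and $X^2\rho$ is uniformly continuous, all the relevant values of $|h''|$ along the short segment differ by less than $c/2$, whence this is bounded below by a fixed positive multiple of $\tau$. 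Feeding the two bounds into the displayed identity gives $|W\tau|$ bounded on $U$, and together with the reduction in the first paragraph this proves Lemma \ref{lemma:tau}. I expect the only genuinely delicate point to be the uniform treatment of the two cases $h'(0)\ge 0$ and $h'(0)<0$ in this last step, which is carried out in full detail in the references cited above.
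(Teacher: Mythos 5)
The paper does not include a proof of this lemma; it simply defers to Sharafutdinov's books and to Dairbekov--Paternain--Stefanov--Uhlmann, and your argument is a correct reconstruction of the standard proof from those sources. The mechanism you use (implicit differentiation of $\rho(\varphi_{\tau})=0$, the $O(\tau)$ bound on the numerator coming from $T\rho=V\rho=0$ and differentiation under the integral in $\rho\circ\varphi_t=\rho+\int_0^t(X\rho)\circ\varphi_s\,ds$, and the matching lower bound $\lvert (X\rho)(\varphi_{\tau})\rvert\ge c'\tau$ deduced from the strict concavity of $t\mapsto\rho(\varphi_t(x,v))$ near $\partial_0 SM$, with the two-case analysis according to the sign of $h'(0)$) is exactly the argument carried out in the cited references.
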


To substantiate the previous claim that the behaviour of $u=u^f$ is the same as that of $\tau$ we proceed as follows.
We consider a smooth integrating factor $R:SM\to GL(n,\C)$ such that $XR+\Phi R=0$.
These always exist for any non-trapping manifold with strictly convex boundary. A simple calculation shows that we may write $u$ in terms
of $R$ as
\[u(x,v)=R(x,v)\int_{0}^{\tau(x,v)}(R^{-1}f)(\varphi_{t}(x,v))\,dt\;\;\text{for}\;(x,v)\in SM,\]
where $\varphi_t$ is the geodesic flow of $(M,g)$.
Thus directly from Lemma \ref{lemma:tau} we obtain:

\begin{Lemma} The functions $Vu$ and $Tu$ are bounded on $SM\setminus\partial_{0}SM$.
\label{lemma:u}
\end{Lemma}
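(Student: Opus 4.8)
The plan is to obtain the bound by differentiating the explicit representation of $u$ and controlling each resulting term, the decisive input being Lemma \ref{lemma:tau}. Throughout, let $Y$ stand for either of the two vector fields $V$ or $T$, and fix $R$ to be the \emph{smooth} integrating factor $R = U_{\widetilde{\Phi}}|_{SM}$ built in Section \ref{proof:supporteverywhere}: since $M\subset\widetilde{M}^{int}$, this $R$ is $C^\infty$ on the compact manifold $SM$ (the only non-smoothness of exit times occurs at glancing boundaries), so $R$, $R^{-1}$ and $d(R^{-1}f)$ are all bounded on $SM$.

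First I would observe that on $SM\setminus\partial_0 SM$ the exit time $\tau$ is smooth, hence $u(x,v)=R(x,v)\int_0^{\tau(x,v)}(R^{-1}f)(\varphi_t(x,v))\,dt$ is differentiable there and the Leibniz rule for the variable upper limit applies, producing three contributions to $Yu$: a term where $Y$ hits the prefactor $R$; a boundary term carrying the factor $Y\tau$ times the evaluation $(R^{-1}f)(\varphi_{\tau(x,v)}(x,v))$; and an integral term where $Y$ is carried inside the integral onto $(R^{-1}f)\circ\varphi_t$.

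Next I would dispatch the first and third terms by soft arguments. The $(YR)$-term is bounded because $R$ and $YR$ are bounded on $SM$ while $\int_0^{\tau(x,v)}(R^{-1}f)(\varphi_t(x,v))\,dt$ is bounded in absolute value by $\tau_\infty\|R^{-1}f\|_{L^\infty(SM)}$. For the integral term, the chain rule gives $Y\big[(R^{-1}f)(\varphi_t(x,v))\big]=(d(R^{-1}f))_{\varphi_t(x,v)}\big((d\varphi_t)_{(x,v)}Y_{(x,v)}\big)$; after embedding $(M,g)$ isometrically into a closed surface $(S,g)$ so that $\varphi_t$ is a complete smooth flow on $SS$, the map $(t,(x,v))\mapsto(d\varphi_t)_{(x,v)}$ is continuous, hence bounded on the compact set $[0,\tau_\infty]\times SM$; since $d(R^{-1}f)$ is bounded on $SM$ and the integration range has length $\tau(x,v)\le\tau_\infty$, the integral term is bounded.

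The only term I expect to present a genuine obstacle is the boundary term $R\,(Y\tau)\,(R^{-1}f)(\varphi_{\tau(x,v)}(x,v))$, since this is the unique place where the failure of $\tau$ to be smooth at $\partial_0 SM$ can a priori cause a blow-up. Here $(R^{-1}f)(\varphi_{\tau(x,v)}(x,v))$ is the evaluation at the exit point of a function continuous on the compact $SM$, hence bounded, while $Y\tau\in\{V\tau,\,T\tau\}$ is bounded on $SM\setminus\partial_0 SM$ precisely by Lemma \ref{lemma:tau}. Combining the three bounds shows $Yu$ is bounded on $SM\setminus\partial_0 SM$ for $Y=V$ and $Y=T$, which is the assertion.
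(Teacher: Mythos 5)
Your proof is correct and follows the route the paper intends: the paper merely remarks that the lemma follows ``directly'' from Lemma~\ref{lemma:tau} and the displayed formula $u = R\int_0^{\tau}(R^{-1}f)\circ\varphi_t\,dt$, and what you have done is carry out the differentiation explicitly, isolating the boundary term $R\,(Y\tau)\,(R^{-1}f)\circ\varphi_\tau$ as the only place where Lemma~\ref{lemma:tau} is needed and bounding the other two contributions (the $YR$ term and the $\int_0^\tau Y[(R^{-1}f)\circ\varphi_t]\,dt$ term) by compactness after extending to a closed surface. The argument is complete and matches the paper's implicit reasoning.
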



Next we note that all the previous work that we have done assuming $u$ smooth may be summarized as follows:

\begin{Theorem} \label{thm:stabLinearT} Let $(M,g)$ be a simple Riemannian surface with boundary and $\Phi$ a smooth, skew-hermitian matrix field in $M$. Then for any $f\in C^\infty(M)$, we have the following stability estimate
    \begin{align*}
	\|f\|_{L^2(M,\C^n)} \le C_1 (1+\|\Phi\|_{C^1}) e^{C_2 \|\Phi\|_{C^1}} \|v\|_{H^1(\partial SM, \C^n)},
    \end{align*}        
where $v$ is any smooth solution of $Xv+\Phi v=-f$.
\end{Theorem}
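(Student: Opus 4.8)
The plan is to recognise that the statement is essentially a repackaging of the argument already carried out in the ``Outline of proof of Theorem~\ref{thm:stabLinear}'', with the influx boundary $\partial_+SM$ replaced throughout by the full boundary $\partial SM$. In that outline the hypothesis $u=u^f$ (equivalently $u|_{\partial_-SM}=0$) entered \emph{only} cosmetically: to discard the $\partial_-SM$ contributions from the boundary pairings $(\cdot,\cdot)_{\partial SM}$ in the Pestov identity \eqref{eq:Pestov3}, and to identify $u|_{\partial_+SM}$ with $I_\Phi f$. All the interior ingredients there — the structure equations \eqref{eq:structure}, the Pestov identity \eqref{eq:Pestov3}, the decomposition \eqref{eq:HIF}, the harmonic identity \eqref{eq:temp}, the estimate \eqref{eq:estu0}, Lemma~\ref{lem:s} and the threshold choice $s=C|\Phi|_{C^1}+1$ — make no reference to the boundary behaviour of the solution, so they apply verbatim to an arbitrary smooth solution $v$ of $Xv+\Phi v=-f$.

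Concretely, I would fix such a $v$, set $v_s:=\Pi_-(e^{sw}v)$, and note that $G_s(e^{sw}v)=-e^{sw}f$ holds for \emph{every} solution of the transport equation, so the transport problem for $v_s$ (and its antiholomorphic companion $\Pi_+(e^{-s'\bar w}v)$) is unchanged. Feeding $v_s$ into \eqref{eq:Pestov3} reproduces \eqref{eq:partial}, but with the three boundary pairings now integrated over all of $\partial SM$. To control the resulting right-hand side I would use that $[\Pi_-,V]=[\Pi_-,T]=0$ on $\partial SM$, whence $Vv_s=\Pi_-e^{sw}(s(Vw)v+Vv)$ and $Tv_s=\Pi_-e^{sw}(s(Tw)v+Tv)$ on $\partial SM$; since $(T,V)$ is an orthonormal frame for $\partial SM$ (on each of $\partial_\pm SM$), the traces $v|_{\partial SM}$, $Vv|_{\partial SM}$, $Tv|_{\partial SM}$ are exactly the quantities controlled by $\|v\|_{H^1(\partial SM)}$. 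Hence the right-hand side of the $\partial SM$-version of \eqref{eq:partial} is $\le C'(s^2+s|\Phi|_{C^0}+1)e^{2sw_\infty}\|v\|_{H^1(\partial SM)}^2$. In the $\partial SM$-analogue of \eqref{eq:estu0} the only change is that $\|(e^{s(w-w_0)}v)_0|_{\partial M}\|_{L^2(\partial M)}$ replaces the $\|I_\Phi f\|$-term, and a Cauchy--Schwarz over the fibres of $SM$ bounds it by $e^{2sw_\infty}\|v\|_{L^2(\partial SM)}$. Combining this with the unchanged estimates \eqref{eq:estf}, \eqref{eq:est1}, \eqref{eq:estu0}, \eqref{eq:estGvs} and recalling $s=C|\Phi|_{C^1}+1$ yields the asserted bound, with $C_1,C_2$ depending only on $(M,g)$.

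The only point I would check with care — and the only place something could conceivably go wrong — is the sign structure behind Lemma~\ref{lem:s}: the positivity \eqref{eq:sbound} is a statement purely about interior $L^2(SM)$ pairings and the size of $s$ relative to $|\Phi|_{C^1}$, so relaxing the boundary condition does not touch it, while the extra $\partial_-SM$ contributions appear only on the right-hand sides of the energy inequalities, where they are harmlessly absorbed into the $\|v\|_{H^1(\partial SM)}$ term. Since $v$ is smooth by hypothesis, no glancing regularisation is needed at this stage, so the proof is genuinely a transcription of the earlier argument, now stated in exactly the form that the subsequent treatment of the glancing will require.
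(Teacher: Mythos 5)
Your proposal is correct and takes essentially the same route as the paper: the authors simply state Theorem~\ref{thm:stabLinearT} as a summary of the work done in the outline of the proof of Theorem~\ref{thm:stabLinear} once one observes that those arguments never used $u|_{\partial_-SM}=0$ except to identify boundary traces with $I_\Phi f$, which for a general smooth solution $v$ are instead controlled by $\|v\|_{H^1(\partial SM)}$. The points you flag — that the interior identities, Lemma~\ref{lem:s}, and the threshold $s=C|\Phi|_{C^1}+1$ are boundary-condition–independent, and that $Vv_s$, $Tv_s$ on $\partial SM$ are controlled via $[\Pi_-,V]=[\Pi_-,T]=0$ by $\|v\|_{H^1(\partial SM)}$ — are exactly what makes the transcription work.
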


\begin{proof}[Proof of Theorem \ref{thm:stabLinear} in full generality] Let $M_{\varepsilon}$ for small $\varepsilon$ be the surface considered above. We let $u:SM\to \C^n$ be the unique solution to the problem
\begin{align*}
    Xu + \Phi u = -f \qquad (SM), \qquad u|_{\partial_- SM} = 0.
\end{align*}
The function $v:=u|_{SM_{\varepsilon}}$ is smooth in $SM_{\varepsilon}$ and solves $Xv+\Phi v=-f$
since $u$ does. Hence we may apply Theorem \ref{thm:stabLinearT} in $M_{\varepsilon}$ to obtain
\begin{align*}
	\|f\|_{L^2(M_{\varepsilon},\C^n)} \le C_1 (1+\|\Phi\|_{C^1}) e^{C_2 \|\Phi\|_{C^1}} \|u\|_{H^1(\partial SM_{\varepsilon}, \C^n)},
    \end{align*}   
where we might as well use the constants for $M$ which bound those for $M_{\varepsilon}$.
We now let $\varepsilon\to 0$; we clearly have
 \[\|f\|_{L^2(M_{\varepsilon},\C^n)} \to \|f\|_{L^2(M,\C^n)}\]
and using Lemma \ref{lemma:u} we see that
\[\|u\|_{H^1(\partial SM_{\varepsilon}, \C^n)}\to \|u\|_{H^1(\partial SM, \C^n)}.\]
Since 
\[
u(x,v)= \left\{ \begin{array}{cl} I_{\Phi}(f)(x,v), &(x,v)\in \partial_{+}SM, \\[5pt] 0, & (x,v)\in\partial_{-}SM, \end{array} \right.
\]
the theorem is proved.

\end{proof}

\subsection{Consistency of the posterior mean: proof of Theorem \ref{main}}\label{prfbayes}
 
 We assume $\sigma^2=1$, the general case $0<\sigma^2<\infty$ requires only notational changes.
 
 \smallskip
 
The overall strategy we pursue here, which has also been used in some form in \cite{V13, NS17, N1, NvdGW18}, is to show first that the Bayesian algorithm recovers the `regression function' $C_\Phi$ consistently in a natural statistical distance function, and to combine this with quantitative \textit{stability estimates} for the inverse map $C_\Phi \mapsto \Phi$ in appropriate metrics. This exploits crucially that the estimated Bayesian regression outputs lie in the (non-linearly constrained) range of the forward map $C_\Phi$, so that the stability estimate applies to them. To make this approach work with `unbounded' Gaussian priors is challenging, and our proofs proceed as follows:  We first establish the posterior contraction Theorem \ref{general} under general conditions, borrowing from Bayesian nonparametric theory (e.g.,  \cite[Theorem 8.19]{GvdV17} or \cite[Theorem 7.3.3]{GN16}), slightly strengthening the usual statement of such theorems to give explicit exponential bounds for the convergence rate to zero of certain posterior probabilities. Since our regression functions $C_\Phi$ take values in $SO(n)$, they are uniformly bounded and the usual Hellinger distance occurring in such contraction theorems is then Lipschitz-equivalent to the standard $L^2$-distance (see Lemma \ref{birge}). Then Lemma \ref{smallv} uses results of \cite{LL99} to show that the key small ball condition in Theorem \ref{general} can be verified for the Gaussian priors from Condition \ref{pco} even after they have been shrunk towards zero, if the true matrix field $\Phi_0$ belongs to the RKHS $\mathcal H$. Next, Lemma \ref{regularity} exploits fine properties \cite{B75, F75} of infinite-dimensional Gaussian measures to show that such `shrunk' priors charge `sufficiently regular' matrix fields (effectively $C^\beta$-balls) with probability close enough to one that the posterior distributions inherits these regularity properties. This is crucial to apply the `forward' estimate Theorem \ref{thm:forward} and the `stability' estimate (\ref{stabi}) in the proof of Theorem \ref{overall} -- effectively the specific structure of our inverse problem enters only in this theorem and only through these two estimates. Finally, the exponential convergence to zero of the order $e^{-(C+3)N \delta_N^2}$ obtained in Theorem \ref{overall} permits a `quantitative uniform integrability argument' in Section \ref{uiq} to deduce convergence of the whole posterior (Bochner-) mean towards the true matrix field $\Phi_0$.  

\smallskip

Let us mention that in the recent contributions \cite{AN20, GN20} (written after the first version of this manuscript was completed), the general proof template developed here has already been used effectively in two different non-linear inverse problems (arising with elliptic PDEs), see also Remark \ref{genrem}.

 \subsubsection{A general contraction theorem}

 Consider a collection $\mathcal P$ of probability density functions on some measurable space $(\mathcal X, \mathcal A)$ with respect to a dominating measure $\mu$, specifically in our measurement model (\ref{model}) we take $$\mathcal P = \Big\{p_\Phi= \frac{dP^1_\Phi}{d\mu}: \Phi \in C(M, \so(n))\Big\}, ~~\mathcal X = \mathbb R^{n \times n} \times \partial_+ SM,$$ where $\mathcal X$ is equipped with its natural product Borel-$\sigma$ algebra $\mathcal A$, where $d\mu = dy \times d\lambda$ with $dy$ equal to Lebesgue measure on $\mathbb R^{n \times n}$ and $\lambda$ given in (\ref{lambo}). By the Gaussianity of the $\varepsilon_{1,j,k}$'s these probability densities are of the form
 \begin{equation}\label{moddens}
 p_\Phi(y, (x,v)) = \frac{1}{(2\pi)^{n^2/2}} \exp \Bigg\{-\frac{1}{2}\sum_{1 \le j,k \le n}\big[y_{j,k} -(C_\Phi((x,v))_{j,k}) \big]^2\Bigg\},~(y, (x,v)) \in \mathcal X.
 \end{equation}
 Since the map $(\Phi, y,(x,v)) \mapsto p_\Phi(y, (x,v))$ is jointly Borel-measurable from $C(M) \times \mathcal X$ to $\mathbb R$ (using (\ref{eq:continf}) and that point-evaluation is $\|\cdot\|_\infty$-continuous), the posterior distribution (\ref{post}) exists by standard arguments (\cite{GvdV17}, p.7) and has the desired form. In the proof of the following theorem we show in particular that the marginal density $\int \prod_{i=1}^N p_\Phi(Y_i, (X_i, V_i))d\Pi(\Phi)$ is positive on events of $P_{\Phi_0}^N$-probability approaching one, so that (\ref{post}) is well-defined also in the frequentist setting where $D_N \sim P_{\Phi_0}^N$. We also define the Hellinger distance $h$ on such densities by
 \begin{equation}\label{hellinger}
 h^2(p_\Phi, p_\Psi) = \int_\mathcal X \big(\sqrt {p_\Phi} - \sqrt {p_\Psi} \big)^2 d\mu,~~ \Phi, \Psi \in C(M, \so(n)).
 \end{equation}
Denote by $N(F, h, \delta)$ the minimal number of Hellinger-balls of radius $\delta$ required to cover a set $F$ of $\mu$-densities on $\mathcal X$. We then have the following
\begin{Theorem}\label{general}
Consider a prior for $\Phi$ arising from a sequence $\Pi=\Pi_N$ of Borel probability measures on $\mathcal F \subseteq C(M, \so(n))$ and let  $\Pi(\cdot|(Y_i, (X_i,V_i))_{i=1}^N)$ be the posterior distribution arising from i.i.d.~observations $(Y_i, (X_i,V_i))_{i=1}^N|\Phi \sim P_\Phi^N$. Let $\Phi_0 \in \mathcal F$, let $\delta_N \to 0$ be a sequence such $\sqrt N \delta_N \to \infty$ as $N \to \infty$, and define sets 
\begin{equation} \label{klnbhd}
B_N=\Big\{\Phi \in \mathcal F: E^1_{\Phi_0} \Big[\log \frac{p_{\Phi_0}}{p_\Phi}((Y, (X,V))\Big] \le \delta_N^2,\ E^1_{\Phi_0}\Big[ \log \frac{p_\Phi}{p_{\Phi_0}}((Y, (X,V))\Big]^2 \le \delta_N^2\Big\}.
\end{equation}
 Suppose for some constant $C>0$ the prior $\Pi$ satisfies for all $N$ large enough
\begin{equation}\label{smallball}
\Pi(B_N) \ge e^{-CN\delta_N^2},
\end{equation}
and that for some sequence $\mathcal F_N \subset \mathcal F$ of approximating sets  for which
\begin{equation} \label{excessmass}
\Pi\big(\mathcal F \setminus \mathcal F_N \big)\le L e^{-(2C+6)N \delta_N^2},\quad \text{for some } 0<L<\infty,
\end{equation}
we have the complexity bound
\begin{equation}\label{ment}
\log N(\mathcal F_N, h, \delta_N)  \le cN \delta_N^2,
\end{equation}
for some fixed constant $c>0$. Then for some large enough constant $m=m(C,c)>0$
\begin{equation}
P_{\Phi_0}^N \left(\Pi\big(\mathcal F_N\cap \{\Phi: h(p_\Phi, p_{\Phi_0}) \le m \delta_N\} | (Y_i, (X_i,V_i))_{i=1}^N\big) \le 1- e^{-(C+3)N \delta_N^2} \right) \to_{N \to \infty} 0.
\end{equation}
\end{Theorem}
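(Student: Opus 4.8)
\subsection*{Proof of Theorem \ref{general} (proposal)}

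The plan is to follow the by-now classical route for posterior contraction rates due to Ghosal, Ghosh and van der Vaart (see \cite[Section 8.3]{GvdV17} or \cite[Section 7.3]{GN16}), but to track all exponents explicitly so as to obtain the quantitative bound $e^{-(C+3)N\delta_N^2}$ on the posterior mass of the `bad' set rather than merely that it tends to zero. Throughout write $Z_N = \int_{\mathcal F} \prod_{i=1}^N (p_\Phi/p_{\Phi_0})(Y_i,(X_i,V_i))\, d\Pi(\Phi)$ for the `evidence' (i.e.\ the denominator in (\ref{post}) after dividing numerator and denominator by $\prod_i p_{\Phi_0}(Y_i,(X_i,V_i))$), and note that, since $\Pi$ is supported in $\mathcal F$, the complement of the target set satisfies $\big(\mathcal F_N\cap\{h(p_\Phi,p_{\Phi_0})\le m\delta_N\}\big)^{c}\cap\mathcal F \;=\; (\mathcal F\setminus\mathcal F_N)\;\cup\;\big(\mathcal F_N\cap\{h(p_\Phi,p_{\Phi_0})>m\delta_N\}\big)$. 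It therefore suffices to bound the posterior mass of each of these two pieces by $\tfrac12 e^{-(C+3)N\delta_N^2}$ on an event of $P_{\Phi_0}^N$-probability approaching one.

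First I would establish the standard \emph{evidence lower bound}: using the small-ball hypothesis (\ref{smallball}), Jensen's inequality applied to $\log$ and to the normalised restriction of $\Pi$ to $B_N$, and Fubini together with Chebyshev's inequality applied to the two moment bounds defining $B_N$ in (\ref{klnbhd}), one obtains that with $P_{\Phi_0}^N$-probability at least $1-(N\delta_N^2)^{-1}\to 1$ one has $Z_N \ge \Pi(B_N)\,e^{-2N\delta_N^2}\ge e^{-(C+2)N\delta_N^2}$ (this is \cite[Lemma 8.10]{GvdV17}, say, with the constants made explicit). Next, the entropy bound (\ref{ment}) gives, via the Le Cam--Birgé testing theory (see, e.g., \cite[Section 7.1]{GN16}), tests $\psi_N=\psi_N(D_N)\in\{0,1\}$ such that for a universal constant $K>0$,
\[ E_{\Phi_0}^N\psi_N \le e^{-Km^2 N\delta_N^2}, \qquad \sup_{\Phi\in\mathcal F_N,\ h(p_\Phi,p_{\Phi_0})>m\delta_N} E_\Phi^N(1-\psi_N)\le e^{-Km^2 N\delta_N^2}, \]
valid once $m=m(c)$ is large enough that the testing exponent beats the entropy $cN\delta_N^2$: one shells $\mathcal F_N$ into Hellinger annuli of radii between $jm\delta_N$ and $(j+1)m\delta_N$, covers each annulus by $\le e^{cN\delta_N^2}$ Hellinger $\delta_N$-balls, uses the Neyman--Pearson test against each ball centre (whose maximal error on the $i.i.d.$ product experiment is bounded by the Hellinger affinity to the power $N$, hence $\le e^{-N(jm\delta_N/2)^2/2}$), takes the maximum over the cover and sums over $j$.

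Then I would combine these ingredients by the usual numerator/denominator argument. For the annular piece, Fubini gives $E_{\Phi_0}^N\big[(1-\psi_N)\int_{\mathcal F_N\cap\{h>m\delta_N\}}\prod_i(p_\Phi/p_{\Phi_0})\,d\Pi\big] = \int_{\mathcal F_N\cap\{h>m\delta_N\}} E_\Phi^N(1-\psi_N)\,d\Pi \le e^{-Km^2N\delta_N^2}$, so Markov's inequality shows this integral is $\le e^{-(Km^2/2)N\delta_N^2}$ with $P_{\Phi_0}^N$-probability $\to 1$; together with $\{\psi_N=0\}$ (which holds with probability $\ge 1-e^{-Km^2N\delta_N^2}$ by Markov applied to $E^N_{\Phi_0}\psi_N$) and the evidence bound $Z_N\ge e^{-(C+2)N\delta_N^2}$, this yields a posterior mass $\le e^{-(Km^2/2 - C - 2)N\delta_N^2}\le \tfrac12 e^{-(C+3)N\delta_N^2}$ for $m$ large. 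For the sieve complement, $E_{\Phi_0}^N\big[\int_{\mathcal F\setminus\mathcal F_N}\prod_i(p_\Phi/p_{\Phi_0})\,d\Pi\big]=\Pi(\mathcal F\setminus\mathcal F_N)\le Le^{-(2C+6)N\delta_N^2}$ by (\ref{excessmass}), so Markov's inequality with threshold $\tfrac12 e^{-(2C+5)N\delta_N^2}$ shows that with $P_{\Phi_0}^N$-probability $\ge 1-2Le^{-N\delta_N^2}\to1$ this integral is below that threshold; dividing by $Z_N\ge e^{-(C+2)N\delta_N^2}$ again gives posterior mass $\le \tfrac12 e^{-(C+3)N\delta_N^2}$. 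Intersecting the (finitely many) events of probability $\to 1$ and adding the two posterior bounds completes the proof.

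I expect the only genuine obstacle to be the bookkeeping of constants rather than any individual estimate. One must fix $C$ and $c$ first and then choose $m=m(C,c)$ large in the correct order (large enough that the shelling series converges and that $Km^2/2>C+3$); and, crucially, the excess-mass hypothesis has to be imposed at the strengthened level $e^{-(2C+6)N\delta_N^2}$ — stronger than the $e^{-(C+2)N\delta_N^2}$ that would suffice for the merely qualitative contraction statement — so that, after one loses a factor $e^{(2C+5)N\delta_N^2}$ to the evidence lower bound and the Markov threshold, what survives is still an exponentially small (indeed summable) posterior probability. This extra margin is the sole price of the quantitative refinement over the textbook version.
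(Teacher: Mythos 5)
Your proposal is correct and follows essentially the same route as the paper: the evidence lower bound (GN16 Lemma 7.3.2 / GvdV17 Lemma 8.10) to lower-bound the denominator, Le~Cam--Birg\'e tests constructed from the entropy bound to kill the Hellinger annulus, and Fubini plus Markov to bound the numerator. The only cosmetic difference is that you run two separate Markov arguments (one for the sieve complement, one for the annulus), whereas the paper bounds $\int_{\bar F_N^c}\prod_i(p_\Phi/p_{\Phi_0})(1-\Psi_N)\,d\Pi$ in a single Markov step with threshold $e^{-(2C+5)N\delta_N^2}$ and only decomposes $\bar F_N^c$ inside the expectation.
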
 
\begin{proof}
    Recall from (\ref{data}) that we write $D_N =(Y_i, (X_i,V_i))_{i=1}^N$. The proof proceeds as in the proof of \cite[Theorems 7.3.1 and 7.3.3]{GN16}: We first use \cite[Lemma 7.3.2]{GN16} and the hypothesis \eqref{smallball} to deduce that the events 
\begin{equation} \label{anset}
    A_N = \left\{\int_\mathcal F \prod_{i=1}^N \frac{p_\Phi}{p_{\Phi_0}}(Y_i, (X_i, V_i))\ d\Pi(\Phi) \ge e^{-(2+C)N\delta_N^2}\right\}
\end{equation}
satisfy $P_{\Phi_0}^N(A_N) \to 1$ as $N \to \infty$. Moreover using (\ref{ment}) and \cite[Theorem 7.1.4]{GN16} with choices $\varepsilon_0 = m' \delta_N$, any $m'<m$, and $\log N(\varepsilon)=c N\delta_N^2$ constant in $\varepsilon>\varepsilon_0$, we deduce that for every $k>1$ there exists $m', m$ large enough such that we can find `tests' (random indicator functions) $\Psi_N=\Psi_N(D_N)$ for which 
\begin{align}\label{tests}
P_{\Phi_0}^N(\Psi_N =1) \to_{N \to \infty} 0 \text{ and } \sup_{\Phi \in \mathcal F_N: h(p_\Phi, p_{\Phi_0}) > m \delta_N} E_\Phi^N(1-\Psi_N) \le e^{-kN\delta_N^2}.
\end{align}
Now let us write $$\bar F_N = \mathcal F_N\cap \{h(p_\Phi, p_{\Phi_0}) \le m \delta_N\}$$ for the event whose posterior probability we want to bound. Then by (\ref{post}) and as $N \to \infty$,
\begin{align*}
&P_{\Phi_0}^N \big(\Pi\big(\bar F^c_N | D_N\big) \ge  e^{-(C+3)N \delta_N^2}  \big) \\
&= P_{\Phi_0}^N \left(\frac{\int_{\bar F_N^c} \prod_{i=1}^N \frac{p_\Phi}{p_{\Phi_0}}(Y_i, (X_i, V_i))d\Pi(\Phi)}{\int_\mathcal F \prod_{i=1}^N \frac{p_\Phi}{p_{\Phi_0}}(Y_i, (X_i, V_i))d\Pi(\Phi)} \ge  e^{-(C+3)N \delta_N^2} , \Psi_N=0, A_N \right) +o(1)\\
&\le P_{\Phi_0}^N \left(\int_{\bar F_N^c} \prod_{i=1}^N \frac{p_\Phi}{p_{\Phi_0}}(Y_i, (X_i, V_i))d\Pi(\Phi)(1-\Psi_N) \ge  e^{-(2C+5)N \delta_N^2} \right) +o(1).
\end{align*}
By Markov's inequality, decomposing $$\bar F_N^c = \mathcal F_N^c \cup \{h(p_\Phi, p_{\Phi_0}) > m \delta_N\},$$ and using Fubini's theorem as well as 
\begin{equation} \label{com}
E_{\Phi_0}^N\prod_{i=1}^N \frac{p_\Phi}{p_{\Phi_0}}(Y_i, (X_i, V_i))(1-\Psi_N) = E_\Phi^N(1-\Psi_N) \le 1
\end{equation}
 we further bound the last probability as
\begin{align*}
& e^{(2C+5)N\delta_N^2} \int_{\bar F_N^c} E_{\Phi}^N(1-\Psi_N)d\Pi(\Phi) \\
&\le e^{(2C+5)N\delta_N^2}\left(2\Pi(\mathcal F_N^c) +  \int_{\Phi \in \mathcal F_N:h(p_\Phi, p_{\Phi_0}) > m \delta_N}E_{\Phi}^N(1-\Psi_N)d\Pi(\Phi) \right) \\
&\le 2Le^{-N\delta_N^2} + e^{(2C+5-k)N\delta_N^2} \to_{N \to \infty} 0,
\end{align*}
where we have used (\ref{excessmass}) and (\ref{tests}) with $k$ and then $m$ large enough.
\end{proof}

The `information-theoretic distance' $h$ arises naturally in such posterior contraction theorems, see \cite{GvdV17}. The following lemma, which adapts a result due to Birg\'e \cite{B04} to the setting of $SO(n)$-valued functions, shows that the Hellinger distance is Lipschitz equivalent to the standard $L^2$-metric $$\|C_{\Phi}-C_\Psi\|_{L^2} =\sqrt{\sum_{1 \le j,k \le n}\|C_{\Phi,j,k}-C_{\Psi,j,k}\|^2_{L^2}}.$$

\begin{Lemma}\label{birge}
For $\Phi \in C(M, \so(n))$, let $C_\Phi\colon \partial_+ SM  \to SO(n)$ be its non-Abelian $X$-ray transform. Then there exist positive constants $c_0=c_0(n), c_1=c_1(n)$ such that
\begin{equation}\label{infeq}
\frac{1}{c_0}\|C_{\Phi}-C_\Psi\|_{L^2}^2 \le h^2(p_\Phi, p_\Psi) \le  c_1\|C_\Phi-C_\Psi\|_{L^2}^2, \quad \forall ~\Phi, \Psi \in C(M, \so(n)).
\end{equation}
\end{Lemma}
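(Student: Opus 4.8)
The plan is to compute the Hellinger affinity explicitly: the product structure $d\mu=dy\times d\lambda$ of the reference measure together with the Gaussianity of the noise reduces it to a one–dimensional integral over $\partial_+SM$, after which the fact that the forward data take values in the \emph{compact} group $SO(n)$ lets one trap the integrand between two multiples of $\frob{C_\Phi-C_\Psi}^2$.

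First I would use that, for fixed $(x,v)$, the density $y\mapsto p_\Phi(y,(x,v))$ of (\ref{moddens}) is that of an $\mathbb R^{n\times n}$–valued Gaussian with mean $C_\Phi((x,v))$ and identity covariance, so by Fubini
\[
\int_{\mathcal X}\sqrt{p_\Phi\,p_\Psi}\,d\mu=\int_{\partial_+SM}\Bigl(\int_{\mathbb R^{n\times n}}\sqrt{p_\Phi(y,(x,v))\,p_\Psi(y,(x,v))}\,dy\Bigr)d\lambda(x,v).
\]
The inner integral is the Hellinger affinity of two unit–covariance Gaussians on $\mathbb R^{n^2}$ with means $C_\Phi((x,v))$ and $C_\Psi((x,v))$; completing the square by means of the identity $\abs{y-a}^2+\abs{y-b}^2=2\bigl|y-\tfrac{a+b}{2}\bigr|^2+\tfrac12\abs{a-b}^2$ shows it equals $\exp\bigl(-\frob{C_\Phi((x,v))-C_\Psi((x,v))}^2/8\bigr)$, where the Frobenius norm of the matrix difference coincides with the Euclidean norm of its vectorisation. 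Since $\lambda$ is a probability measure we have $h^2(p_\Phi,p_\Psi)=2-2\int\sqrt{p_\Phi p_\Psi}\,d\mu$, hence
\[
h^2(p_\Phi,p_\Psi)=2\int_{\partial_+SM}\Bigl(1-e^{-\frob{C_\Phi-C_\Psi}^2/8}\Bigr)\,d\lambda.
\]

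To conclude I would invoke $C_\Phi,C_\Psi\in SO(n)$, so that $\frob{C_\Phi}=\frob{C_\Psi}=\sqrt n$ and therefore $0\le\frob{C_\Phi-C_\Psi}^2/8\le n/2$ pointwise on $\partial_+SM$. On the interval $[0,n/2]$ the concave function $t\mapsto 1-e^{-t}$ satisfies $\tfrac{2(1-e^{-n/2})}{n}\,t\le 1-e^{-t}\le t$ (the upper bound is $e^{-t}\ge 1-t$; the lower bound is concavity together with vanishing at $0$). Inserting $t=\frob{C_\Phi-C_\Psi}^2/8$ and integrating against $\lambda$ yields (\ref{infeq}) with, e.g., $c_1=1/4$ and $c_0=2n/(1-e^{-n/2})$ when $\norm{\cdot}_{L^2}$ is taken with respect to $\lambda$; the volume–element normalisation of $L^2(\partial_+SM)$ merely rescales the constants by the fixed factor $\mathrm{Area}(\partial_+SM)$.

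The computation itself is elementary and I expect no real obstacle. The single substantive point is the use of the compactness of $SO(n)$ to obtain the uniform a priori bound $\frob{C_\Phi-C_\Psi}^2\le 4n$, which is what makes the lower bound in (\ref{infeq}) possible; this is exactly Birg\'e's device \cite{B04} transported to matrix–valued regression functions, and it genuinely fails for unbounded regression functions, since $1-e^{-t}$ is not bounded below by any multiple of $t$ as $t\to\infty$.
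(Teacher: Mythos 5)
Your proof is correct and follows essentially the same route as the paper: compute the Hellinger affinity explicitly as $\int_{\partial_+SM}\exp\{-\frob{C_\Phi-C_\Psi}^2/8\}\,d\lambda$ and then exploit the uniform bound $\frob{C_\Phi-C_\Psi}^2\le 4n$ coming from $SO(n)$-valuedness. The only difference is cosmetic — you make the lower bound self-contained via the concavity/chord inequality for $1-e^{-t}$ on $[0,n/2]$, where the paper simply cites Birg\'e's Proposition 1 for that step, and you obtain the upper bound pointwise rather than via Jensen's inequality.
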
 
\begin{proof}
Write 
\begin{equation} \label{affin}
\rho(p_\Phi, p_\Psi) \equiv \int_\mathcal X \sqrt{p_\Phi p_{\Psi}} d\mu=1-\frac{1}{2}h^2(p_\Phi, p_\Psi)
\end{equation}
for the `Hellinger affinity'. By (\ref{moddens}) and using the standard formula for the moment generating function of $N(0,1)$-variables, the quantity $\rho(p_\Phi, p_\Psi)$ equals
\begin{align*}
&=\frac{1}{(2\pi)^{n^2/2}} \int_\mathcal X \!\!\! \exp \Big\{\frac{1}{4}\sum_{j,k}\Big[-\big[y_{j,k} -(C_\Phi((x,v))_{j,k})\big]^2 -\big[y_{j,k} -(C_\Psi((x,v))_{j,k}) \big]^2\Big]\Big\} \\
& = \int_{\partial_+SM} \exp\Big\{-\frac{1}{4}\sum_{j,k}\big[C^2_\Phi((x,v))_{j,k} +C^2_\Psi((x,v))_{j,k}\big]  \Big\} \\
& \quad \times \frac{1}{(2\pi)^{n^2/2}}\int_{\mathbb R^{n \times n}} e^{\frac{1}{2} \sum_{j,k} y_{j,k}(C_\Phi((x,v))_{j,k} +C_\Psi((x,v))_{j,k})} e^{-\sum_{j,k}y_{jk}^2/2} dy d\lambda(x,v)  \\
&= \int_{\partial_+SM} \exp\Big\{-\frac{2}{8}\sum_{j,k}\big[C^2_\Phi((x,v))_{j,k} +C^2_\Psi((x,v))_{j,k}\big]\Big\}  \\ 
&\quad \quad \quad \times \exp\Big\{\frac{1}{8} \sum_{j,k}\big[C_\Phi((x,v))_{j,k} +C_\Psi((x,v))_{j,k}\big]^2 \Big\} d\lambda(x,v) \\
&= \int_{\partial_+SM}  \exp\Big\{-\frac{1}{8} \frob{C_\Phi(x,v)-C_\Psi(x,v)}^2 \Big\} d\lambda(x,v).
\end{align*}
By Jensen's inequality the last integral is greater than or equal to $\exp\{-\|C_\Phi-C_\Psi\|_{L^2}^2/8\}$ and using standard inequalities for $1-e^{-z}, z>0,$ the right hand side of (\ref{infeq}) follows. Next we notice that since $C_\Phi(x,v), C_\Psi(x,v) \in SO(n)$, their matrix entries are all bounded by one and we hence have $\frob{C_\Phi(x,v)-C_\Psi(x,v)}^2/8 \le B^2$ for some constant $B=B(n)$. We can thus proceed exactly as in the proof of \cite[Proposition 1]{B04} (or see Lemma 22 in \cite{GN20}) to also deduce the left hand side inequality in (\ref{infeq}).
\end{proof}

 \subsubsection{Verification of the prior mass condition}

We now verify condition (\ref{smallball}) in the last theorem for an explicit constant $C>0$ and the Gaussian prior from Theorem \ref{main}. To do this we first show that one can reduce to checking small ball conditions for $\|\cdot\|_{L^2(M)}$-norms on the level of the original matrix parameter $\Phi$.

\begin{Lemma}\label{klrid}
For $\Phi_0 \in C(M, \so(n))$ and $\kappa>0$ define $$\mathcal B_N(\kappa) = \{\Phi \in C(M, \so(n)): \|\Phi - \Phi_0\|_{L^2(M)} \le \delta_N/\kappa\}$$ and let $B_N, \Pi, \delta_N,$ be as in Theorem \ref{general}. Then for some $\kappa=\kappa(M,n)$ large enough we have $\mathcal B_N(\kappa) \subset B_N$ and thus in particular, for every $N \in \mathbb N$, $$\Pi(B_N) \ge \Pi(\mathcal B_N(\kappa)).$$
\end{Lemma}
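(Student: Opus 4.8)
The plan is to control both quantities appearing in the Kullback--Leibler-type neighbourhood \eqref{klnbhd} by a fixed multiple of $\|\Phi - \Phi_0\|_{L^2(M)}^2$, after which the claimed inclusion follows by taking $\kappa$ large enough.

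First I would compute the log-likelihood ratio explicitly from the Gaussian form \eqref{moddens}. Fixing $\sigma^2=1$ and sampling $(Y,(X,V))\sim P^1_{\Phi_0}$, write $Y_{j,k}=C_{\Phi_0}((X,V))_{j,k}+\varepsilon_{j,k}$ with $\varepsilon_{j,k}\sim N(0,1)$ independent of $(X,V)$; expanding the two quadratic forms gives the pointwise identity
\[ \log\frac{p_\Phi}{p_{\Phi_0}}(Y,(X,V)) = \sum_{1\le j,k\le n}\varepsilon_{j,k}\,(C_\Phi - C_{\Phi_0})((X,V))_{j,k} - \tfrac12\frob{(C_\Phi - C_{\Phi_0})((X,V))}^2 . \]
Conditionally on $(X,V)=(x,v)$ the right-hand side is a Gaussian variable with mean $-\tfrac12\frob{(C_\Phi - C_{\Phi_0})(x,v)}^2$ and variance $\frob{(C_\Phi - C_{\Phi_0})(x,v)}^2$. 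Taking expectations --- first over the $\varepsilon_{j,k}$, then over $(X,V)\sim\lambda$ --- and using $E[Z^2]=\mathrm{Var}(Z)+(EZ)^2$ for a Gaussian $Z$, I obtain
\begin{align*}
E^1_{\Phi_0}\Big[\log\tfrac{p_{\Phi_0}}{p_\Phi}\Big] &= \tfrac12\int_{\partial_+ SM}\frob{C_\Phi - C_{\Phi_0}}^2\, d\lambda, \\
E^1_{\Phi_0}\Big[\big(\log\tfrac{p_\Phi}{p_{\Phi_0}}\big)^2\Big] &= \int_{\partial_+ SM}\Big(\frob{C_\Phi - C_{\Phi_0}}^2 + \tfrac14\frob{C_\Phi - C_{\Phi_0}}^4\Big)\, d\lambda .
\end{align*}
Since $C_\Phi,C_{\Phi_0}$ take values in $SO(n)$ one has $\frob{C_\Phi - C_{\Phi_0}}\le 2\sqrt n$ pointwise, so $\int\frob{C_\Phi - C_{\Phi_0}}^4\,d\lambda \le 4n\int\frob{C_\Phi - C_{\Phi_0}}^2\,d\lambda$ and both displayed quantities are at most $c(n)\,\|C_\Phi - C_{\Phi_0}\|_{L^2(\partial_+ SM)}^2$ (here and below the $L^2(\partial_+ SM)$ norm is w.r.t.\ $\lambda$, which differs from $d\Sigma^2$ only by the fixed factor $\mathrm{Area}(\partial_+ SM)$).

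Next I would apply the forward estimate \eqref{eq:cont0} of Theorem \ref{thm:forward} with $k=0$ and $\Psi=\Phi_0$, which is legitimate since $\so(n)\subset\u(n)$; for $k=0$ it reduces to $\|C_\Phi - C_{\Phi_0}\|_{L^2(\partial_+ SM)}\lesssim\|\Phi - \Phi_0\|_{L^2(M)}$ with a constant depending only on $(M,g)$. Combining this with the previous paragraph gives
\[ E^1_{\Phi_0}\Big[\log\tfrac{p_{\Phi_0}}{p_\Phi}\Big]\ \vee\ E^1_{\Phi_0}\Big[\big(\log\tfrac{p_\Phi}{p_{\Phi_0}}\big)^2\Big]\ \le\ c(M,n)\,\|\Phi - \Phi_0\|_{L^2(M)}^2 , \]
so that, choosing $\kappa=\kappa(M,n):=\sqrt{c(M,n)}\vee 1$, every $\Phi$ with $\|\Phi - \Phi_0\|_{L^2(M)}\le\delta_N/\kappa$ makes both quantities $\le\delta_N^2$, i.e.\ $\mathcal B_N(\kappa)\subset B_N$, whence $\Pi(B_N)\ge\Pi(\mathcal B_N(\kappa))$. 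I do not expect a substantial obstacle here: the argument is elementary, and the only two points requiring a little care are the exact shape of the Gaussian moments (handled by the conditional mean/variance formulae above) and the fact that the quartic term can be absorbed at all --- which works precisely because the forward data $C_\Phi$ is $SO(n)$-valued and hence uniformly bounded.
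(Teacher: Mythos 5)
Your proof is correct and follows essentially the same route as the paper: expand the Gaussian log-likelihood ratio, bound the first and second KL-type moments by $\|C_\Phi-C_{\Phi_0}\|_{L^2(\partial_+SM)}^2$ using the $SO(n)$-boundedness of the scattering data, and then apply the $L^2\to L^2$ forward continuity estimate \eqref{eq:cont0} with $k=0$ to pass to $\|\Phi-\Phi_0\|_{L^2(M)}^2$. The only cosmetic difference is that you compute the conditional second moment exactly via $E[Z^2]=\mathrm{Var}(Z)+(EZ)^2$ and absorb the quartic term, whereas the paper splits via $(a+b)^2\le 2a^2+2b^2$ before taking expectations; both give the same $c(M,n)\|\Phi-\Phi_0\|_{L^2(M)}^2$ bound and hence the same choice of $\kappa$.
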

\begin{proof}
From (\ref{model}) with $\Phi=\Phi_0$ and (\ref{moddens}) we have 
\begin{align*}
& \log p_\Phi (Y_1, (x,v))  - \log p_{\Phi_0}(Y_1, (x,v)) = \\
& -\sum_{1 \le j,k \le n}\left[\frac{1}{2}(C_\Phi((x,v))_{j,k}-C_{\Phi_0}((x,v))_{j,k})^2 + \varepsilon_{1,j,k} (C_\Phi((x,v))_{j,k}-C_{\Phi_0}((x,v))_{j,k})\right].
\end{align*}
Therefore, since $E^1_\varepsilon \varepsilon_{1,j,k}=0$ and $\lambda$ is the unit volume measure on $\partial_+SM$,
\begin{align*}E^1_{\Phi_0} \Big[\log \frac{p_{\Phi_0}}{p_\Phi}((Y, (X,V))\Big] &= \frac{1}{2}\|C_\Phi - C_{\Phi_0}\|^2_{L^2(\partial_+ SM)}  \le \frac{C_1^2}{2}\|\Phi-\Phi_0\|^2_{L^2(M)},
\end{align*}
where we have used the forward estimate (\ref{eq:cont0}). Thus if $\kappa \ge 2/C_1^2$ the first inequality defining $B_N$ is verified for $\Phi \in \mathcal B_N(\kappa)$. 
To verify the second, note that all $C_\Phi(x,v) \in SO(n)$ are bounded in $\|\cdot\|_{L^\infty(\partial_+SM)}$-norm by some fixed constant $B=B(n)$. Thus
\begin{align*}
&E^1_{\Phi_0}\big[ \log \frac{p_\Phi}{p_{\Phi_0}}(Y, (X,V))\big]^2 \\
& \le 2E_{\lambda}^1 \Big[\sum_{j,k}\frac{1}{2}(C_\Phi((X,V))_{j,k}-C_{\Phi_0}((X,V))_{j,k})^2 \Big]^2 \\
& \quad+ 2E^1_\lambda E_{\varepsilon}^1 \Big[\sum_{j,k}\varepsilon_{j,k} (C_\Phi((X,V))_{j,k}-C_{\Phi_0}((X,V))_{j,k}) \Big]^2 \\
& \le c'(B,n) \|C_\Phi - C_{\Phi_0}\|_{L^2}^2 \le c(n)C_1 \|\Phi-\Phi_0\|_{L^2}^2
\end{align*}
for some constant $c(n)>0$, where we have also used that $\varepsilon_{j,k} \sim^{i.i.d.}N(0,1)$ implies, for $(x,v) \in \partial_+ SM$ fixed,
$$\sum_{j,k}\varepsilon_{j,k} \big(C_\Phi((x,v))_{j,k}-C_{\Phi_0}((x,v))_{j,k}\big) \sim N(0, \frob{C_\Phi(x, v)-C_{\Phi_0}(x,v)}^2),$$ 
and again (\ref{eq:cont0}), so that the overall result follows from appropriate choice of $\kappa>0$
\end{proof}
 
 We now turn to lower bound the small ball probabilities $\Pi(\mathcal B_N(\kappa))$ for the prior $\Pi$ featuring in Theorem \ref{main}  where for the given $\alpha$ we will choose
 \begin{equation}\label{choices}
 \delta_N = N^{-\alpha/(2\alpha+2)} \text{ so that }  \sqrt N \delta_N = N^{1/(2\alpha +2)}.
 \end{equation}
Note that $\sqrt N \delta_N$ precisely equals the rescaling of the prior in (\ref{tampering}). Let us recall the base RKHS $\mathcal H$ from Condition \ref{pco}.
\begin{Lemma}\label{smallv}
Let $\Pi=\times_{j=1}^{\bar n} \Pi_B$ be the prior for $\Phi$ from Theorem \ref{main} with $\alpha>\beta+1, \beta>0$, assume $\Phi_0 \in \mathcal H$ and choose $\delta_N$ as in (\ref{choices}). Let $\mathcal B_N(\kappa)$ be as in Lemma \ref{klrid}. Then for every $\kappa>0$ there exists a constant $C'=C'(\kappa, \alpha, \|\Phi_0\|_\mathcal H, n, M)$ such that for every $N \in \mathbb N$,
$$\Pi(\mathcal B_N(\kappa)) \ge  \exp\{-C' N \delta_N^2 \}.$$ In particular, for $B_N$ as in (\ref{klnbhd}) in Theorem \ref{general}, there exists a finite  constant $$C=C(\alpha, \|\Phi_0\|_\mathcal H, n, M)>0$$ such that for every $N \in \mathbb N$,
\begin{equation} \label{verif}
\Pi(B_N) \ge  \exp\{-C N \delta_N^2 \}.
\end{equation}
\end{Lemma}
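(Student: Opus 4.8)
The plan is to reduce the asserted small ball estimate to a statement about the single rescaled coordinate prior $\Pi_B$, and then to combine the standard Cameron--Martin decentering inequality for Gaussian measures with the centered small ball bound implied by Condition \ref{pco}.

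\textbf{Step 1 (reduction to coordinates).} Under the linear isomorphism $C(M,\so(n))\simeq\times_{j=1}^{\bar n}C(M)$, write $\Phi_0$ through its coordinates $B_{0,1},\dots,B_{0,\bar n}$, which lie in $\mathcal H$ by hypothesis. Fixing a basis of $\so(n)$ (e.g. (\ref{matr}) for $n=3$), there is a constant $c_2=c_2(n)\ge 1$ with $\|\Phi-\Phi_0\|_{L^2(M)}^2\le c_2\sum_{j=1}^{\bar n}\|B_j-B_{0,j}\|_{L^2(M)}^2$, so that, since $\Pi=\times_{j=1}^{\bar n}\Pi_B$,
\[
\Pi(\mathcal B_N(\kappa))\ \ge\ \prod_{j=1}^{\bar n}\Pi_B\!\left(\|B_j-B_{0,j}\|_{L^2(M)}<\frac{\delta_N}{\kappa\sqrt{c_2\bar n}}\right).
\]

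\textbf{Step 2 (decentering).} By (\ref{tampering}), $\Pi_B$ is the law of $f'/\tau_N$ with $f'\sim\Pi'$ and $\tau_N:=\sqrt N\delta_N=N^{1/(2\alpha+2)}$. Since $\mathcal H$ is the RKHS of $\Pi'$, the RKHS of $\Pi_B$ is again $\mathcal H$, but with norm $\tau_N\|\cdot\|_{\mathcal H}$; in particular $B_{0,j}$ lies in the RKHS of $\Pi_B$ with squared norm $\tau_N^2\|B_{0,j}\|_{\mathcal H}^2=N\delta_N^2\|B_{0,j}\|_{\mathcal H}^2$. The Cameron--Martin small ball inequality for Gaussian measures (see e.g. \cite[Section 2.6]{GN16}, \cite{LL99}) then gives, for every $\varepsilon>0$ and each $j$,
\[
\Pi_B(\|B_j-B_{0,j}\|_{L^2(M)}<\varepsilon)\ \ge\ \exp\!\left\{-\tfrac12 N\delta_N^2\|B_{0,j}\|_{\mathcal H}^2\right\}\Pi_B(\|B_j\|_{L^2(M)}<\varepsilon).
\]
Multiplying the $\bar n$ bounds, the decentering cost is $\exp\{-\tfrac12 N\delta_N^2\sum_j\|B_{0,j}\|_{\mathcal H}^2\}\ge\exp\{-c_3(n)N\delta_N^2\|\Phi_0\|_{\mathcal H}^2\}$, which is already of the asserted form.

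\textbf{Step 3 (centered small ball) and conclusion.} It remains to lower bound $\Pi_B(\|B_j\|_{L^2(M)}<\varepsilon_N)=\Pi'(\|f'\|_{L^2(M)}<\tau_N\varepsilon_N)$, where $\varepsilon_N=\delta_N/(\kappa\sqrt{c_2\bar n})$, so that $\tau_N\varepsilon_N\asymp_{\kappa,n}\sqrt N\delta_N^2\to 0$. Because $\mathcal H\hookrightarrow H^\alpha(M)$ and $M$ is $2$-dimensional, the unit ball of $\mathcal H$ has $L^2(M)$-metric entropy $\lesssim\varepsilon^{-2/\alpha}$ (Sobolev embedding, cf. \cite[Ch.~4]{T}), and the small ball--entropy correspondence of \cite{LL99} (see also \cite[Ch.~11]{GvdV17}) yields $-\log\Pi'(\|f'\|_{L^2(M)}<u)\lesssim u^{-2/(\alpha-1)}$ for all small $u>0$ (note $\alpha-1>0$ as $\alpha>\beta+1>1$). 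With $u=\tau_N\varepsilon_N$ and $\delta_N=N^{-\alpha/(2\alpha+2)}$ one computes $(\sqrt N\delta_N^2)^{-2/(\alpha-1)}=N^{1/(\alpha+1)}=N\delta_N^2$, so this term is $\lesssim_{\kappa,\alpha,n}N\delta_N^2$ as well. Combining Steps 1--3 gives $\Pi(\mathcal B_N(\kappa))\ge\exp\{-C'N\delta_N^2\}$ with $C'=C'(\kappa,\alpha,\|\Phi_0\|_{\mathcal H},n,M)$; taking $\kappa=\kappa(M,n)$ as in Lemma \ref{klrid} so that $\mathcal B_N(\kappa)\subset B_N$ then yields (\ref{verif}) with $C=C(\alpha,\|\Phi_0\|_{\mathcal H},n,M)$.

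\textbf{The main obstacle} is precisely the bookkeeping forced by the $N$-dependent rescaling: enlarging the prior's scale by $\tau_N=\sqrt N\delta_N$ inflates the Cameron--Martin norm of the required recentering shift to order exactly $N\delta_N^2$ --- which is affordable, being just the `budget' in (\ref{smallball}) --- while simultaneously shrinking the relevant small ball radius to $\tau_N\varepsilon_N\asymp\sqrt N\delta_N^2\to 0$, and one must verify that the centered small ball exponent $(\tau_N\varepsilon_N)^{-2/(\alpha-1)}$ remains of order $N\delta_N^2$ and no larger. This delicate balance is exactly what the choice $\delta_N=N^{-\alpha/(2\alpha+2)}$ in (\ref{choices}) together with the rescaling (\ref{tampering}) are engineered to produce, and is why shrinking the Gaussian prior is needed at all (cf.~Remark \ref{scaling}).
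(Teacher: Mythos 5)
Your proposal is correct and follows essentially the same route as the paper's proof: reduction to the $\bar n$ scalar coordinates, Cameron--Martin decentering with RKHS cost $N\delta_N^2\|B_{0,j}\|_{\mathcal H}^2$, and the Li--Linde entropy-to-small-ball bound for the centred probability with the same exponent arithmetic $(\sqrt N\delta_N^2)^{-2/(\alpha-1)}=N^{1/(\alpha+1)}=N\delta_N^2$. The only cosmetic difference is that the paper bounds $\|\Phi-\Phi_0\|_{L^2}$ by $\bar n\max_j\|B_j-B_{0,j}\|_{L^2}$ rather than by a sum of squares, which is immaterial.
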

\begin{proof}
Since $\|\Phi - \Phi_0\|_{L^2(M)} \le \bar n\max_j \|B_j-B_{0,j}\|_{L^2(M)}$, to prove the first inequality it suffices to lower bound, by independence of the $B_j$'s, 
$$\prod_{j=1}^{\bar n}\Pi_B\big(B: \|B-B_{0,j}\|_{L^2(M)} \le \delta_N /(\kappa\bar n) \big), \quad \bar n = \dim(\so(n)).$$ The sets $\{b:\|b\|_{L^2(M)} \le c\}, c>0,$ are convex and symmetric, hence by \cite[Corollary 2.6.18]{GN16} we have for every $j$ fixed,
\begin{align*}
\Pi_B (\|B-B_{0,j}\|_{L^2(M)} \le \delta_N /(\kappa \bar n) ) &\ge e^{-\|B_{0,j}\|^2_{RKHS(\Pi_B)}/2} \Pi_B(\|B\|_{L^2(M)} \le \delta_N /(\kappa\bar n)) \\
&=e^{-N\delta_N^2 \|B_{0,j}\|^2_{\mathcal H}/2} \Pi_B(\|B\|_{L^2(M)}\le \delta_N/(\kappa\bar n))
\end{align*}
where we have used that $$\|B_{0,j}\|^2_{RKHS(\Pi_B)}=N \delta_N^2\|B_{0,j}\|^2_{\mathcal H}<\infty$$ in view of (\ref{tampering}), (\ref{choices}), (and where we refer to \cite[Exercise 2.6.5]{GN16} or \cite[Lemma I.16]{GvdV17} for standard preservation properties of RKHS under linear transformations). 

We next bound the centred probability which by (\ref{tampering}), (\ref{choices}) equals $$\Pi_B(\|B\|_{L^2(M)} \le \delta_N /(\kappa\bar n)) = \Pi'(\|f'\|_{L^2(M)} \le \sqrt N \delta^2_N/(\kappa \bar n)).$$ By Condition \ref{pco} the RKHS of the Gaussian law of $f'$ in $C(M)$ is continuously imbedded into $H^\alpha(M)$. The unit ball $U$ of this space satisfies the bound
\begin{equation}\label{metsob}
\log N(U, \|\cdot\|_{L^2(M)}, \epsilon) \le (A/\epsilon)^{2/\alpha},~~0<\epsilon<A,~\text{for some }A>0, 
\end{equation}
for its $L^2(M)$-covering numbers: Indeed, since the simple surface $M$ is diffeo-morphic to a disk, we can extend all functions $f$ in $H^\alpha(M)$ to elements $f_e$ of the Sobolev space $H^\alpha(I_2)$ on the $2$-torus $I_2 =(0,1]^2 \supset M$, with Sobolev-norm increased by at most a fixed multiplicative constant (Ch.4~in \cite{T}). An appropriate bound for the $L^2(I_2)$-covering numbers of $\{f_e: f \in U\}$ is then provided in \cite[(4.184)]{GN16}, which in turn (since $\|f-f'\|_{L^2(M)} \le \|f_e-f_e'\|_{L^2(I_2)}$ for all $f,f' \in L^2(M)$) also bounds the $L^2(M)$-covering numbers of $U$ as required.

To proceed, we can now use (\ref{choices}) and \cite[Theorem 1.2]{LL99} (with the value of $\alpha$ there equal to our $2/\alpha$) to lower bound the last small ball probability by 
\begin{equation}\label{fit}
\exp\big\{ -c\big[\sqrt N \delta^2_N/(\kappa \bar n)\big]^{-\frac{4/\alpha}{2-(2/\alpha)}}\big\} \ge \exp\{-c_0 N\delta_N^2\},~~\text{ noting } \sqrt N \delta_N^2 = N^{-(\alpha-1)/(2\alpha+2)} 
\end{equation}
for constants $c=c(\alpha), c_0=c_0(\kappa, n, \alpha)$ and since $\alpha>1$. Combining what precedes proves the first inequality of the lemma with
\begin{equation}
C'=\frac{1}{2}\sum_{j=1}^{\bar n}\|B_{0,j}\|^2_{\mathcal H} + \bar n c_0
\end{equation}
The second inequality (\ref{verif}) now follows from the first and Lemma \ref{klrid}.
\end{proof}

We note that the proof in fact shows that the constant $C$ depends only on \textit{upper bounds} for $\|\Phi_0\|_\mathcal H$.

 \subsubsection{Excess mass and complexity condition}
 
 Having determined the constant $C$ in (\ref{smallball}) for the Gaussian prior in Theorem \ref{main}, we now turn to verifying the remaining conditions (\ref{excessmass}) and (\ref{ment}) in Theorem \ref{general} for a suitable choice of $\mathcal F_N$ that will provide sufficient regularity of the posterior distribution to combine it with our stability estimates for the map $\Phi \mapsto C_\Phi$. 
 \begin{Lemma}\label{regularity}
Let $\Pi$ be the prior from Theorem \ref{main} with $\alpha>\beta+1, \beta>0$, let $\delta_N$ be as in (\ref{choices}) and assume $N \delta^2_N \ge 1$. For $m>0$ define subsets of $C(M, \so(n))$ as 
\begin{align*}
\mathcal F_N &=\Big\{\Phi: \Phi=\Phi_1 + \Phi_2,   \|\Phi_1\|_{L^2} \le \delta_N, \|\Phi_2\|_{H^\alpha} \le m, \|\Phi\|_{C^\beta} \le m  \Big\}
\end{align*} 
a) Then for every $K>0$ we can choose $m$ large enough such that $$\Pi(\mathcal F_N) \ge 1- e^{-KN \delta_N^2}.$$ b) Moreover for some $c=(m, \alpha,n, vol(M))$ we have $$\log N(\mathcal F_N, h, \delta_N)  \le cN \delta_N^2.$$ 
\end{Lemma}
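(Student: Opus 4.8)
The plan is to prove (a) and (b) separately; both are, once the deterministic estimates of the earlier sections are available, statements about Gaussian measures and metric entropy. For (a) the key tool is a Gaussian isoperimetric (Borell) inequality \cite{B75} in the ``shifted small ball'' form: for a centred Gaussian Borel measure $\nu$ on a separable Banach space with RKHS unit ball $\mathbb H_1$, any $M'\ge 0$ and $\epsilon>0$,
\[ \nu\big(M'\mathbb H_1 + \epsilon B\big)\ \ge\ \Phi\big(\Phi^{-1}(\nu(\epsilon B)) + M'\big), \]
where $B$ is the ambient unit ball and $\Phi$ the standard normal c.d.f. I would apply this coordinatewise to $\Pi_B$, the law of $B=f'/(\sqrt N\delta_N)$ with $f'\sim\Pi'$, with $B$ taken to be the $L^2(M)$-unit ball and $\epsilon\asymp\delta_N/\bar n$.

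Two inputs then drive the bound. First, $\mathrm{RKHS}(\Pi_B)=\mathcal H$ with norm $\sqrt N\delta_N\|\cdot\|_{\mathcal H}$, and $\mathcal H\hookrightarrow H^\alpha(M)$ continuously (Condition~\ref{pco}); hence every $B_2\in M'\mathbb H_1$ obeys $\|B_2\|_{H^\alpha}\lesssim M'/(\sqrt N\delta_N)$, so choosing $M'\asymp m\sqrt N\delta_N$ makes this $\lesssim m$. Second, the proof of Lemma~\ref{smallv} already gives $\Pi_B(\|B\|_{L^2(M)}\le\epsilon)\ge e^{-c_0N\delta_N^2}$, whence $\Phi^{-1}(\nu(\epsilon B))\ge-\sqrt{2c_0}\,\sqrt N\delta_N$ and therefore $\Phi^{-1}(\nu(\epsilon B))+M'\ge (\kappa m-\sqrt{2c_0})\sqrt N\delta_N$. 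Using $1-\Phi(t)\le e^{-t^2/2}$ this shows that, for each $j$, the event $\{B_j=B_{j,1}+B_{j,2}:\ \|B_{j,1}\|_{L^2}\le\epsilon,\ \|B_{j,2}\|_{H^\alpha}\le m\}$ fails with probability $\le e^{-c(m)N\delta_N^2}$ with $c(m)\uparrow\infty$. Separately, Borell's concentration inequality applied to $\|f'\|_{C^\beta}$ (a.s.\ finite by Condition~\ref{pco}) together with the rescaling (\ref{tampering}) gives $\Pi_B(\|B\|_{C^\beta}>m/\bar n)\le e^{-c'm^2N\delta_N^2}$. A union bound over the $\bar n$ coordinates, translating the $L^2$, $H^\alpha$ and $C^\beta$ norms of $\Phi$ into those of the $B_j$ (the entries of $\Phi$ being $0$ or $\pm B_j$), letting $\Phi_1$, resp.\ $\Phi_2$, be the field assembled from the $B_{j,1}$, resp.\ $B_{j,2}$, and using $N\delta_N^2=N^{1/(\alpha+1)}\to\infty$, then yields $\Pi(\mathcal F_N)\ge 1-e^{-KN\delta_N^2}$ upon choosing $m=m(K)$ large.

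For (b), Birgé's comparison (Lemma~\ref{birge}) gives $h(p_\Phi,p_\Psi)\le\sqrt{c_1}\,\|C_\Phi-C_\Psi\|_{L^2(\partial_+SM)}$, and the forward estimate (\ref{eq:cont0}) at $k=0$ gives $\|C_\Phi-C_\Psi\|_{L^2(\partial_+SM)}\lesssim\|\Phi-\Psi\|_{L^2(M)}$; hence $h\lesssim\|\cdot\|_{L^2(M)}$ and it suffices to bound $L^2(M)$-covering numbers of $\mathcal F_N$ at scale $\asymp\delta_N$. By definition every $\Phi\in\mathcal F_N$ lies within $L^2(M)$-distance $\delta_N$ of the ball $\{\Phi_2:\|\Phi_2\|_{H^\alpha}\le m\}$, so (up to a fixed factor in the radius) a $\delta_N$-net of the latter nets $\mathcal F_N$, and the Sobolev entropy bound already recorded in (\ref{metsob}) gives $\log N(\{\|\cdot\|_{H^\alpha}\le m\},\|\cdot\|_{L^2(M)},\delta_N)\lesssim(m/\delta_N)^{2/\alpha}$. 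The decisive arithmetic is the matching built into (\ref{choices}): $(1/\delta_N)^{2/\alpha}=N^{1/(\alpha+1)}=N\delta_N^2$, whence $\log N(\mathcal F_N,h,\delta_N)\le c\,m^{2/\alpha}N\delta_N^2$, which is (\ref{ment}) with $c=c(m,\alpha,n,\mathrm{vol}(M))$.

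The main obstacle is (a): one must track how the $N$-dependent shrinkage (\ref{tampering}) interacts simultaneously with the RKHS norm (which inflates by $\sqrt N\delta_N$, so a fixed-size $H^\alpha$-ball corresponds to an RKHS-ball of radius $\asymp m\sqrt N\delta_N$), with the embedding $\mathcal H\hookrightarrow H^\alpha$, and with the $C^\beta$ support hypothesis, and then feed the centred small-ball bound of Lemma~\ref{smallv} into the isoperimetric inequality at exactly the scale that produces an exponent of the right order $N\delta_N^2$. Part (b) is comparatively routine, relying only on the Birgé comparison, the forward $L^2$ estimate, and the identity $(1/\delta_N)^{2/\alpha}=N\delta_N^2$.
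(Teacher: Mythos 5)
Your proposal is correct and follows essentially the same route as the paper: part (a) combines Borell's isoperimetric inequality (applied to the $L^2$-small ball plus an RKHS ball of radius $\asymp m\sqrt N\delta_N$, using the small-ball bound from Lemma \ref{smallv} and the embedding $\mathcal H\hookrightarrow H^\alpha$) with Gaussian concentration of the a.s.\ finite $C^\beta$-norm (the paper phrases this via a countable-supremum representation and Fernique's theorem, but it is the same tool), and part (b) reduces the Hellinger entropy to the $L^2(M)$-entropy of an $H^\alpha$-ball via Lemma \ref{birge} and \eqref{eq:cont0}, exploiting the identity $(1/\delta_N)^{2/\alpha}=N\delta_N^2$. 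The only cosmetic difference is that you apply Borell coordinatewise with a union bound whereas the paper works directly with the product measure on $\times_{j=1}^{\bar n}C(M)$.
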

\begin{proof}
a) Recalling (\ref{tampering}), (\ref{choices}), we can identify a prior draw $\Phi$ with the vector field $$(B_1, \dots, B_{\bar n}) = \frac{1}{\sqrt{N} \delta_N} (f'_1, \dots, f'_{\bar n}),~~f'_{j} \sim^{i.i.d.} \Pi'.$$ We denote by $\Pi'_{\bar n}$ the product measure describing the law of the centred Gaussian random variable $(f'_1, \dots, f'_{\bar n})$ in the Banach space $\times_{j=1}^{\bar n}C(M)$. 

Write next $\mathcal F_N=\mathcal F_{N,1} \cap \mathcal F_{N,2}$ where, with $f'_{i,\cdot}$ corresponding to $\Phi_i$, $i=1,2,$ 
\begin{align*}
    \mathcal F_{N,1} &= \Big\{(f'_j=f'_{1,j}+f'_{2,j})_{j=1}^{\bar n}:\sum_{j=1}^{\bar n}\|f'_{1,j}\|^2_{L^2} \le  N \delta_N^4, \sum_{j=1}^{\bar n} \|f'_{2,j}\|^2_{H^\alpha(M)} \le m^2 N \delta^2_N\Big\}, \\
    \mathcal F_{N,2} &= \Big\{(f'_1, \dots, f'_{\bar n}): \sum_{j=1}^{\bar n} \|f'_j\|_{C^\beta(M)} \le m \sqrt N \delta_N  \Big\},    
\end{align*}
so that it suffices to bound the prior probabilities of the complements of $\mathcal F_{N,1}, \mathcal F_{N,2}$.

We first turn to $\mathcal F_{N,2}$. By Condition \ref{pco} the vector field $(f'_1, \dots, f'_{\bar n})$ defines a Gaussian Borel random variable in a separable linear subspace $\mathcal S$ of $\times_{j=1}^{\bar n} C^\beta(M)$. By the Hahn-Banach theorem its $\times_{j=1}^{\bar n} C^\beta(M)$-norm can then be represented as a countable supremum $$\|(f'_1, \dots, f'_{\bar n})\|_{\times_{j=1}^{\bar n} C^\beta(M)} = \sup_{t \in T} |t(f'_1, \dots, f'_{\bar n})|$$ of bounded linear real functionals $T=(t_m: m \in \mathbb N)$ defined on $(\mathcal S, \|\cdot\|_{\times_{j=1}^{\bar n} C^\beta(M)})$. We then apply a version of Fernique's theorem \cite{F75}, concretely \cite[Theorem 2.1.20]{GN16}, to the centred Gaussian process $(X(t):=t(f'_1, \dots, f'_{\bar n}): t \in T)$ to deduce that for some fixed constant $D>0$, $$E\sum_{j=1}^{\bar n} \|f'_j\|_{C^\beta(M)}  \le D<\infty,$$ and then also, for $m=m(D)$ large enough and since $N \delta_N^2 \ge 1$,
\begin{align*}\Pi(\mathcal F_{N,2}^c) &\le \bar \Pi^{\bar n} \Big(\sum_{j=1}^{\bar n} \|f'_j\|_{C^\beta(M)}   - E\sum_{j=1}^{\bar n} \|f'_j\|_{C^\beta(M)}  \ge m \sqrt N \delta_N/2\Big) \le 2e^{-km^2 N \delta_N^2}
\end{align*} 
for $k$ a fixed constant, which can be made less than $e^{-KN \delta_N^2}/2$ for any $K$ provided $m=m(K, k, D)$ is chosen large enough. 

It remains to show that $\Pi(\mathcal F_{N,1}) \ge 1-\frac{1}{2}\exp\{-KN \delta_N^2\}$ for $m$ large enough. Using the continuous imbedding $\mathcal H \subset H^\alpha(M)$ with imbedding constant $c'$ (cf.~Condition \ref{pco}), it suffices to lower bound
\begin{align*}
&\Pi'_{\bar n}\Big((f'_j=f'_{1,j}+f'_{2,j})_{j=1}^{\bar n}:\sum_{j=1}^{\bar n}\|f'_{1,j}\|_{L^2}^2 \le  N \delta_N^4,  \Big(\sum_{j=1}^{\bar n} \|f'_{2,j}\|^2_{\mathcal H}\Big)^{1/2} \le \frac{m}{c'} \sqrt N \delta_N \Big) \notag \\
\quad \quad &=\Pi'_{\bar n}\big(\bar A_N+ m_N O_{\mathcal H} \big) \end{align*}
where $O_{\mathcal H}$ is the unit ball in $\times_{j=1}^{\bar n} \mathcal H$ and where we define $$\bar A_N \equiv \Big\{\omega \in \times_{j=1}^{\bar n}C(M):\|\omega\|_{L^2} \le  \sqrt N \delta_N^2\Big\},\qquad m_N \equiv \frac{m\sqrt N \delta_N}{c'}.$$ By Borell's \cite{B75} isoperimetric inequality (see \cite[Theorem 2.6.12]{GN16}) the last probability is bounded below by 
\begin{equation} \label{isop}
\Phi\big(\Phi^{-1}(\Pi'_{\bar n}(\bar A_N))+m_N\big)
\end{equation}
where $\Phi=\Pr(Z \le \cdot)$ is the cumulative distribution function of a $N(0,1)$ random variable $Z$. By the same arguments as those leading to (\ref{fit}) above, we have $$\Pi'_{\bar n}(\bar A_N) \ge \exp\{-c^2_2 N \delta_N^2\}~\text{ for } c_2=c_2(n, \alpha)>0,$$ and using the basic inequality $\Phi^{-1}(u) \ge - \sqrt{2 \log_- u}, 0<u<1,$ (see \cite[Lemma K.6]{GvdV17}) and monotonicity of $\Phi$ we can further lower bound (\ref{isop}) by
$$\Phi\big(\big(-c_2 \sqrt{2} +\frac{m}{c'}\big) \sqrt N \delta_N\big).$$
Now given $K$ define $$m_N' \equiv - \Phi^{-1}\big[\exp(-KN \delta_N^2)/2 \big]$$ which by the previous inequality for $\Phi^{-1}$ can be made less than or equal to $(\frac{m}{c'}-c_2\sqrt 2) \sqrt N \delta_N$ whenever $m=m(K, c_2, c')$ is large enough. Conclude  that the penultimate display is lower bounded by 
\begin{align*}
    \Phi\big(- \Phi^{-1}\big[\exp(-KN \delta_N^2)/2 \big]\big) &= 1-\Phi\big(\Phi^{-1}\big[\exp(-KN \delta_N^2)/2 \big]\big) \\
    &= 1- \frac{1}{2}\exp\{-KN \delta_N^2\},    
\end{align*}
completing the proof of Part a).

b) To prove Part b), note first that to construct a $\delta_N$-covering of $\mathcal F_N$ in $\|\cdot\|_{L^2(M)}$-distance it suffices, by definition of $\mathcal F_N$, to construct such a covering for a $H^\alpha(M)$-ball of radius $m$, so that (\ref{metsob}) and the definition of $\delta_N$ give (with $A'>0$)
\begin{equation} \label{covdel}
\log N(\mathcal F_N, \|\cdot\|_{L^2(M)}, \delta_N) \le (A'/\delta_N)^{2/\alpha} \le b N \delta_N^2 \text{ for some }b=b(m, \alpha, \bar n)>0.
\end{equation}
Lemma \ref{birge} and (\ref{eq:cont0}) imply that such a covering induces a $(C_1\sqrt {c_1})\delta_N$-covering of $\mathcal F_N$ in the Hellinger distance $h$ of log-cardinality at most $b N \delta_N^2$. Since $\|\cdot\|_{L^2(M)}$ is a norm and hence homogeneous, we can increase the constant from $b$ to $c=c(b, c_1,C_1, \alpha)$ in (\ref{covdel}) and obtain a $\delta_n$-covering for $h$. The desired inequality in Part b) follows. 
\end{proof}

\begin{Remark}\normalfont 
We note that the introduction of the set $\mathcal F_{N,1}$ and the use of Borell's inequality in the previous Lemma could be avoided if one wishes to prove Theorem \ref{main} only for any $\eta>0$ (in this case a minor adaptation of Theorem \ref{general} and of (\ref{covdel}) can be shown to give a slightly worse rate $\delta_N'=N^{-\beta/(2\alpha+2)}$ in (\ref{ll}) below). We give this argument however to obtain our sharper bound for $\eta$ in (\ref{eta}).
\end{Remark}

\subsubsection{Final contraction theorem}

We now put everything together to establish a posterior contraction theorem for $\Phi$ and subsequently deduce Theorem \ref{main}  from it.

\begin{Theorem}\label{overall}
Under the hypotheses of Theorem \ref{main}, with $\alpha>\beta+1, \beta>0, \delta_N=N^{-\alpha/(2\alpha+2)}$ and $C$ from (\ref{verif}), we have for all $m'$ large enough that 
\begin{equation}\label{ll}
P_{\Phi_0}^N \Big( \Pi\big(\Phi: \|C_\Phi - C_{\Phi_0}\|_{L^2(\partial_+SM)} \le m'\delta_N, \|\Phi\|_{C^{\beta}(M)} \le m'|D_N) \ge 1- e^{-(C+3)N \delta_N^2} \Big) \to 1
\end{equation}
 as $N \to \infty$. Moreover, if $\beta>2$ then we have for every integer $\bar \beta$ such that $1<\bar \beta<\beta$ and all $m''$ large enough,
$$P_{\Phi_0}^N \Big( \Pi\big(\Phi: \|\Phi - \Phi_0\|_{L^2(M)} \ge m'' \delta_N^{(\bar \beta-1)/\bar \beta}|D_N\big) \ge e^{-(C+3)N \delta_N^2} \Big) \to_{N \to \infty} 0.$$ 
\end{Theorem}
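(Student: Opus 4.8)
The plan is to feed the pieces assembled above into the general contraction Theorem \ref{general}, with sieve $\mathcal F_N$ as in Lemma \ref{regularity} and $\delta_N=N^{-\alpha/(2\alpha+2)}$ as in \eqref{choices}. Condition \eqref{smallball} holds with the constant $C$ from \eqref{verif} by Lemma \ref{smallv}; condition \eqref{excessmass} holds with $L=1$ by taking $K=2C+6$ in part a) of Lemma \ref{regularity}; and the metric entropy bound \eqref{ment} is exactly part b) of that lemma. Since also $\sqrt N\,\delta_N=N^{1/(2\alpha+2)}\to\infty$, Theorem \ref{general} applies, so that with $P_{\Phi_0}^N$-probability tending to one the posterior mass of $\mathcal F_N\cap\{\Phi:h(p_\Phi,p_{\Phi_0})\le m\delta_N\}$ is $>1-e^{-(C+3)N\delta_N^2}$, for $m$ large enough. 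On the event $\{h(p_\Phi,p_{\Phi_0})\le m\delta_N\}$ Lemma \ref{birge} gives $\|C_\Phi-C_{\Phi_0}\|_{L^2(\partial_+SM)}\le\sqrt{c_0}\,m\,\delta_N$, and on $\mathcal F_N$ one has $\|\Phi\|_{C^\beta(M)}\le m$ by construction; hence $\mathcal F_N\cap\{h\le m\delta_N\}$ is contained in the set $\mathcal G_N:=\{\Phi:\|C_\Phi-C_{\Phi_0}\|_{L^2(\partial_+SM)}\le m'\delta_N,\ \|\Phi\|_{C^\beta(M)}\le m'\}$ appearing in \eqref{ll}, with $m'=(1+\sqrt{c_0})\,m$. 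This proves the first assertion.

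\textbf{Part 2: upgrading to an $L^2(M)$-bound on $\Phi$ — the interpolation step.} Now assume $\beta>2$ and fix an integer $\bar\beta$ with $1<\bar\beta<\beta$ (possible precisely because $\beta>2$); then $\bar\beta\le\lfloor\beta\rfloor$, so on $\mathcal G_N$ the $C^{\bar\beta}$-norm is dominated by the $C^\beta$-norm, giving $\|\Phi\|_{C^{\bar\beta}(M)}\le m'$ and $\|\Phi-\Phi_0\|_{H^{\bar\beta}(M)}\lesssim\|\Phi-\Phi_0\|_{C^{\bar\beta}(M)}\le m'+\|\Phi_0\|_{C^\alpha}$ (the last step using that $M$ is a bounded domain). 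The forward estimate \eqref{eq:cont0} with $k=\bar\beta$ then furnishes an $N$-independent constant $K_1=K_1(m',\|\Phi_0\|_{C^\alpha},\bar\beta,M)$ with $\|C_\Phi-C_{\Phi_0}\|_{H^{\bar\beta}(\partial_+SM)}\le K_1$ for all $\Phi\in\mathcal G_N$. Standard interpolation of Sobolev norms on the compact manifold $\partial_+SM$ (cf.\ \cite[Ch.~4]{T}) yields
\begin{equation*}
\|C_\Phi-C_{\Phi_0}\|_{H^1(\partial_+SM)}\lesssim\|C_\Phi-C_{\Phi_0}\|_{L^2(\partial_+SM)}^{1-1/\bar\beta}\,\|C_\Phi-C_{\Phi_0}\|_{H^{\bar\beta}(\partial_+SM)}^{1/\bar\beta}\lesssim(m'\delta_N)^{1-1/\bar\beta}\,K_1^{1/\bar\beta},
\end{equation*}
i.e.\ $\|C_\Phi-C_{\Phi_0}\|_{H^1(\partial_+SM)}\le K_2\,\delta_N^{(\bar\beta-1)/\bar\beta}$ on $\mathcal G_N$, with $K_2$ independent of $N$.

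\textbf{Part 3: applying the stability estimate and concluding.} Since $\beta>1$, on $\mathcal G_N$ we also have $\|\Phi\|_{C^1(M)}\le\|\Phi\|_{C^\beta(M)}\le m'$, so the prefactor $C'\,c(\Phi,\Phi_0)(1+\|\Phi_0\|_{C^1})$ occurring in \eqref{stabi}, with $c(\Phi,\Phi_0)$ as in \eqref{eq:cPhiPsi}, is bounded on $\mathcal G_N$ by some $N$-independent $K_3=K_3(m',\|\Phi_0\|_{C^1})$ — this is exactly where the $N$-independence of the $C^\beta$-radius $m'$, itself forced by the rescaling \eqref{tampering} of the prior, is used to stop the exponential factor $e^{C_2(\|\Phi\|_{C^1}\vee\|\Phi_0\|_{C^1})}$ from degrading with $N$. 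Corollary \ref{cor:stab2} then gives $\|\Phi-\Phi_0\|_{L^2(M)}\le K_3\|C_\Phi-C_{\Phi_0}\|_{H^1(\partial_+SM)}\le K_2K_3\,\delta_N^{(\bar\beta-1)/\bar\beta}$ for every $\Phi\in\mathcal G_N$. Taking $m''=2K_2K_3$, this shows $\mathcal G_N\subset\{\Phi:\|\Phi-\Phi_0\|_{L^2(M)}<m''\delta_N^{(\bar\beta-1)/\bar\beta}\}$, hence $\{\|\Phi-\Phi_0\|_{L^2(M)}\ge m''\delta_N^{(\bar\beta-1)/\bar\beta}\}\subset\mathcal G_N^c$. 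On the event (of $P_{\Phi_0}^N$-probability $\to1$) identified in Part 1 we have $\Pi(\mathcal G_N^c\,|\,D_N)<e^{-(C+3)N\delta_N^2}$, and therefore $\Pi(\|\Phi-\Phi_0\|_{L^2(M)}\ge m''\delta_N^{(\bar\beta-1)/\bar\beta}\,|\,D_N)<e^{-(C+3)N\delta_N^2}$ there; consequently the $P_{\Phi_0}^N$-probability of the complementary event tends to zero, which is the second assertion.

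\textbf{Where the difficulty lies.} The delicate point is Part 2: Bayesian contraction only produces $L^2(\partial_+SM)$-closeness of $C_\Phi$ to $C_{\Phi_0}$ (via the Hellinger metric and Lemma \ref{birge}), whereas the stability estimate \eqref{stabi} demands the stronger $H^1(\partial_+SM)$-norm. Bridging this gap is done by interpolating against the $H^{\bar\beta}$-smoothness of $C_\Phi$ that the posterior inherits from the a priori $C^\beta$-regularity of $\Phi$, and it is the need for an \emph{integer} exponent $\bar\beta\in(1,\beta)$ in the forward estimate \eqref{eq:cont0} that forces $\beta>2$ here. One must simultaneously keep the stability constant \eqref{eq:cPhiPsi} bounded uniformly in $N$, which is the reason the prior is shrunk towards the origin so that $\|\Phi\|_{C^1}$ stays bounded on the relevant part of the posterior support.
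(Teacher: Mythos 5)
Your proposal is correct and follows essentially the same route as the paper's proof: Part 1 applies Theorem \ref{general} via Lemmata \ref{smallv}, \ref{regularity} and converts Hellinger to $L^2(\partial_+SM)$ via Lemma \ref{birge}; Part 2--3 combine a uniform $H^{\bar\beta}$-bound on $C_\Phi-C_{\Phi_0}$ (from Theorem \ref{thm:forward} and the posterior's $C^\beta$-regularity), Sobolev interpolation on $\partial_+SM$, and the stability estimate \eqref{stabi} with an $N$-uniform constant. The only cosmetic difference is that you bound $\|C_\Phi-C_{\Phi_0}\|_{H^{\bar\beta}}$ directly from \eqref{eq:cont0}, whereas the paper bounds $\|C_\Phi\|_{H^{\bar\beta}}$ and $\|C_{\Phi_0}\|_{H^{\bar\beta}}$ separately and uses the triangle inequality.
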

\begin{Remark}\label{beta} \normalfont
The constraint $\beta>2$ in the second limit in Theorem \ref{overall} is only required to allow space for an \textit{integer} $\bar \beta \in (1,\beta)$ in the following proof, when combining the interpolation inequality (\ref{police2}) with Theorem \ref{thm:forward} for $k = \bar \beta \in \mathbb N$. If a version of Theorem \ref{thm:forward} were established for non-integer $k$ then $\beta>1$ and real $\bar \beta \in (1,\beta)$ would be permitted in Theorem \ref{overall} (and then also in Theorem \ref{main}).
\end{Remark}
\begin{proof}
From Lemmata \ref{smallv} and \ref{regularity} with $K=2C+6$ and Theorem \ref{general} we deduce for $m=m(C)$ large enough, and as $N \to \infty$
\begin{equation*}
P_{\Phi_0}^N \left(\Pi\big(\Phi: \{\Phi: h(p_\Phi, p_{\Phi_0}) \le m \delta_N\} \cap \{\|\Phi\|_{C^{\beta}(M)} \le m\} | D_N\big) \le 1- e^{-(C+3)N \delta_N^2} \right) \to 0.
\end{equation*}
 Applying Lemma \ref{birge} gives the first limit (\ref{ll}) with $m' =(1+\sqrt{c_0}) m$.

\smallskip

To prove the second limit we will apply the stability estimate Theorem \ref{thm:stability} in the form (\ref{stabi}) with $\Psi =\Phi_0$. By hypothesis we have $\|\Phi_0\|_{C^1(M)} \lesssim \|\Phi_0\|_{C^\alpha(M)}<\infty$; as a consequence for all $\Phi$ contained in the event in (\ref{ll}) with $\beta>2$, the constants $c(\Phi, \Phi_0)$ from (\ref{eq:cPhiPsi}) are uniformly bounded by a fixed constant that depends on $m', \|\Phi_0\|_{C^1(M)}$ and hence for those $\Phi$'s
\begin{equation}\label{stab0}
\|\Phi - \Phi_0\|_{L^2(M)} \le D(\|\Phi_0\|_{C^1(M)}, m') \|C_{\Phi} - C_{\Phi_0}\|_{H^{1}(\partial_+ SM)}.
\end{equation}
To proceed we will need a standard interpolation result for Sobolev spaces on the manifold $\partial_+ SM$ to the effect that
\begin{equation}\label{police2}
\|W\|_{H^{1}(\partial_+ SM)} \lesssim \|W\|_{L^2(\partial_+ SM)}^{(k-1)/k} \|W\|_{H^{k}(\partial _+ SM)}^{1/k}
\end{equation}
for all $W \in H^{k}(\partial_+ SM)$ and any $k > 1$. [For real-valued functions this can be proved using standard arguments from Ch.4~in \cite{T} and these results extend to matrix-fields in a straightforward way.] Moreover we will use the basic inequality
\begin{equation}
\|\Phi\|_{H^{\bar \beta}(M)} \lesssim \|\Phi\|_{C^{\bar \beta}(M)} \le \|\Phi\|_{C^\beta(M)},
\end{equation}
for all $\Phi \in C^\beta(M)$. Now Theorem \ref{thm:forward} implies that for all $\Phi$'s in the event in (\ref{ll}) the corresponding $\|C_\Phi\|_{H^{\bar \beta}(\partial_+ SM)}$'s are uniformly bounded by a fixed constant that depends on $m', \beta, \bar \beta$ only. Likewise $$\|C_{\Phi_0}\|_{H^{\bar \beta}(\partial_+ SM)} \le\|C_{\Phi_0}\|_{C^{\alpha}(\partial_+ SM)} \lesssim (1+\|\Phi_0\|_{C^\alpha})<\infty$$ in view of Theorem \ref{thm:forward} and since $\Phi_0 \in C^\alpha$ for $\alpha>\bar \beta$ by hypothesis. Hence for such $\Phi$'s the combination of (\ref{stab0}) and (\ref{police2}) with $W=C_\Phi-C_{\Phi_0}, k=\bar \beta$ gives
\begin{equation*}
\|\Phi - \Phi_0\|_{L^2(M)} \lesssim  \|C_{\Phi} - C_{\Phi_0}\|_{L^2(\partial_+ SM)}^{(\bar \beta-1)/\bar \beta} \|C_{\Phi} - C_{\Phi_0}\|^{1/\bar \beta}_{H^{\bar \beta}(\partial _+ SM)}  \lesssim \delta_N^{(\bar \beta-1)/\bar \beta}.
 \end{equation*}
The second conclusion of Theorem \ref{overall} now follows from the preceding inequalities and (\ref{ll}).

\subsubsection{Completion of the proof of Theorem \ref{main}} \label{uiq}

The last step is to show that the posterior contraction rate in the second limit of Theorem \ref{overall} carries over to the posterior mean $E^\Pi[\Phi|D_N]$.  For any integer $\bar \beta \in (1, \beta)$ and every
\begin{equation} \label{eta}
0<\eta<\frac{\alpha}{2\alpha +2} \times \frac{\bar \beta-1}{\bar \beta},
\end{equation}
we have as $N \to \infty$ $$\eta_N := m''\delta_N^{(\bar \beta-1)/\bar \beta} \simeq N^{-\frac{\alpha}{2\alpha+2} \frac{\bar \beta-1}{\bar \beta}}=o(N^{-\eta}).$$  Then by the inequalities of Jensen and Cauchy-Schwarz
\begin{align*}\label{deco}
&\|E^\Pi[\Phi|D_N]-\Phi_0\|_{L^2(M)} \notag \\
&\le E^\Pi [\|\Phi-\Phi_0\|_{L^2(M)}|D_N] \\
&\le \eta_N + E^\Pi [\|\Phi-\Phi_0\|_{L^2(M)}1\{\|\Phi-\Phi_0\|_{L^2(M)} \ge \eta_N\}|D_N] \notag \\
&\le \eta_N + [E^\Pi [\|\Phi-\Phi_0\|_{L^2(M)}^2|D_N]^{1/2} \Pi(\|\Phi-\Phi_0\|_{L^2(M)} \ge \eta_N|D_N)^{1/2}
\end{align*} 
and it suffices to show that the second summand is stochastically $O(\eta_N)$ as $N \to \infty$. 

Arguing as in the proof of Theorem \ref{general} and using Lemma \ref{smallv} implies that the sets $A_N$ from (\ref{anset}) with $C$ from (\ref{verif}) satisfy $P_{\Phi_0}^N(A_N) \to 1$ as $N \to \infty$. Now Theorem \ref{overall}, (\ref{post}) and Markov's inequality imply 
\begin{align*}
&P^N_{\Phi_0} \Big(E^\Pi [\|\Phi-\Phi_0\|_{L^2(M)}^2|D_N] \times \Pi(\|\Phi-\Phi_0\|_{L^2(M)} \ge \eta_N|D_N) > \eta_N^2 \Big)\\
& \le P^N_{\Phi_0} \Big(E^\Pi [\|\Phi-\Phi_0\|_{L^2(M)}^2|D_N] e^{-(C+3)N \delta_N^2} > \eta_N^2 \Big) + o(1) \\
& \le P^N_{\Phi_0} \Big(e^{-(C+3)N \delta_N^2} \frac{\int \|\Phi-\Phi_0\|_{L^2(M)}^2 \prod_{i=1}^N \frac{p_\Phi}{p_{\Phi_0}}(Y_i, (X_i, V_i))d\Pi(\Phi) }{\int  \prod_{i=1}^N \frac{p_\Phi}{p_{\Phi_0}}(Y_i, (X_i, V_i)) d\Pi(\Phi) } > \eta_N^2, A_N \Big) + o(1)  \\
& \le e^{-N \delta_N^2} \eta_N^{-2} E^N_{\Phi_0}\int \|\Phi-\Phi_0\|_{L^2(M)}^2 \prod_{i=1}^N \frac{p_\Phi}{p_{\Phi_0}}(Y_i, (X_i, V_i))d\Pi(\Phi) \\
&\le e^{-N\delta_N^2}\eta_N^{-2} \int \| \Phi - \Phi_0\|_{L^2(M)}^2 d\Pi(\Phi) \lesssim e^{- N\delta_N^2} \eta_N^{-2} \to_{N \to \infty} 0
\end{align*}
where we have also used Fubini's theorem, (\ref{com}), and that the Gaussian measure $\Pi$ is supported in $L^2(M)$ and hence integrates $\|\Phi\|^2_{L^2}$ to a finite constant (see, e.g., \cite[Exercise 2.1.5]{GN16}). 
\end{proof}

        





\ack 

We would like to thank the referee for helpful remarks and suggestions. We are further very grateful to Bill Lionheart for having introduced us to polarimetric neutron tomography and its connection to the non-abelian X-ray transform. 
FM was supported by NSF grant DMS-1814104 and a UC Hellman Fellowship. RN was supported by the European Research Council under ERC grant No. 647812 (UQMSI).  GPP thanks the University of California at Santa Cruz and the University of Washington for hospitality while this work was in progress. GPP was supported by the Leverhulme trust and EPSRC grant EP/R001898/1.


\frenchspacing
\bibliographystyle{cpam}

\end{document}